\numberwithin{equation}{section}
\newtheorem{theorem}{Theorem}[section]
\newtheorem{proposition}[theorem]{Proposition}
\newtheorem{conjecture}[theorem]{Conjecture}
\newtheorem{corollary}[theorem]{Corollary}
\newtheorem{lemma}[theorem]{Lemma}
\theoremstyle{definition}
\newtheorem{remark}[theorem]{Remark}
\newtheorem{definition}[theorem]{Definition}
\let\oldmarginpar\marginpar
\renewcommand\marginpar[1]{\-\oldmarginpar[\raggedleft\small\sf
#1]{\raggedright\small\sf #1}}
\newcommand{\myAA}{\mathcal{A}}
\newcommand{\ZZ}{\mathbb{Z}}
\newcommand{\RR}{\mathbb{R}}
\newcommand{\QQ}{\mathbb{Q}}
\newcommand{\sgn}{\operatorname{sgn}}
\newcommand{\uc}{\bar{e}}
\newcommand{\lc}{\underline{e}}
\newcommand{\ulc}{\bar{\underline{e}}}
\newcommand{\UX}{\overline{X}}
\newcommand{\LX}{\underline{X}}
\newcommand{\ULX}{\overline{\underline{X}}}
\newcommand{\cA}{\mathcal{A}}
\newcommand{\cT}{\mathcal{T}}
\begin{document}

\title
{The existence of greedy bases in rank 2 quantum cluster algebras}

\author{Kyungyong Lee}
\address{Department of Mathematics, Wayne State University, Detroit, MI 48202, U.S.A.}
\author{Li Li}
\address{Department of Mathematics and Statistics, Oakland University, Rochester, MI 48309, U.S.A.}
\author{Dylan Rupel}
\address{Department of Mathematics, Northeastern University, Boston, MA 02115, U.S.A.}
       \author{Andrei Zelevinsky}

\begin{abstract}
 We establish the existence of a quantum lift of the greedy basis.
\end{abstract}

\maketitle

\section{introduction}
At the heart of the definition of quantum cluster algebras is a desire to understand nice bases in quantum algebras arising from the representation theory of non-associative algebras.  Of particular interest is the dual canonical basis in the quantized coordinate ring of a unipotent group, or more generally in the quantized coordinate ring of a double Bruhat cell.  Through a meticulous study of these algebras and their bases the notion of a cluster algebra was discovered by Fomin and Zelevinsky \cite{fomin-zelevinsky1} with the notion of a quantum cluster algebra following in the work \cite{quantum} of Berenstein and Zelevinsky.  Underlying the definition of quantum cluster algebras is a deep conjecture that the quantized coordinate rings described above in fact have the structure of a quantum cluster algebra and that the cluster monomials arising from these cluster structures belong to the dual canonical bases of the quantum algebras.  The most pressing questions in the theory are thus related to understanding bases of a (quantum) cluster algebra.

Several bases are already known for both classical and quantum cluster algebras.  Our main interest in bases of classical cluster algebras is related to the positivity phenomenon observed in the dual canonical basis \cite{Lu}.  The constructions of interest originated with the atomic bases in finite types and affine type $A$ (in the sense of \cite{fomin-zelevinsky2}) which were discovered and constructed explicitly through the works of several authors.  These atomic bases consist of all indecomposable positive elements of the cluster algebra, they are ``atomic" in the sense that an indecomposable positive element cannot be written as a sum of two nonzero positive elements.

This line of investigation was initiated by Sherman and Zelevinsky in \cite{sz-Finite-Affine} where atomic bases were constructed in type $A_1^{(1)}$, the so called Kronecker type.  In the works \cite{cerulli-irelli1,cerulli-irelli2} atomic bases were constructed by Cerulli Irelli for finite types and type $A_2^{(1)}$ respectively.  The construction of atomic bases for affine type $A$ was completed by Dupont and Thomas in \cite{dupont-thomas} using triangulations of surfaces with marked points on the boundary.  The representation theory of quivers also played prominently in these works but since we will not pursue this direction here we refer the reader to the previously cited works for more details.

Pushing beyond affine types it was shown by Lee, Li, and Zelevinsky in \cite{LLZ2} that for wild types the set of all indecomposable positive elements can be linearly dependent and therefore do not form a basis.  In contrast, these authors in \cite{llz} constructed for rank 2 coefficient-free cluster algebras a  combinatorially defined ``greedy basis" which consists of a certain subset of the indecomposable positive elements.

Our main goal in this note is to establish the existence of a quantum lift of the greedy basis. 

\subsection{Rank 2 cluster algebras and their greedy bases}\label{sec:greedy}

Fix positive integers $b,c>0$.  The commutative cluster algebra $\cA(b,c)$ is the subring of $\QQ(x_1,x_2)$ generated by the \emph{cluster variables} $x_m$ ($m\in\ZZ$), where the $x_m$ are rational functions in $x_1$ and $x_2$, defined recursively by the \emph{exchange relations}
\[x_{m+1}x_{m-1}=\begin{cases} x_m^b+1 & \text{ if $m$ is odd;}\\ x_m^c+1 & \text{ if $m$ is even.}\end{cases}\]
The cluster variables are organized into clusters $\{x_m,x_{m+1}\}$ ($m\in\ZZ$).  

It is a fundamental result of Fomin and Zelevinsky \cite{fomin-zelevinsky1} that, although the exchange relations appear to produce rational functions, one always obtains Laurent polynomials.  They actually showed the following slightly stronger result which asserts in addition that the Laurent Phenomenon does not depend on the choice of an initial cluster.

\begin{theorem}[{\cite[Theorem 3.1]{fomin-zelevinsky1}, Laurent Phenomenon}]
 For any $m\in\ZZ$ we have $\cA(b,c)\subset\ZZ[x_m^{\pm1},x_{m+1}^{\pm1}]$.
\end{theorem}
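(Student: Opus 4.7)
The plan is to show that every cluster variable $x_n$ lies in $\ZZ[x_1^{\pm 1},x_2^{\pm 1}]$; this suffices since $\cA(b,c)$ is generated by the $x_n$, and the case of a general initial cluster $\{x_m,x_{m+1}\}$ then follows from the index-shift symmetry of the exchange relations (together with the swap $b\leftrightarrow c$ for odd shifts). The additional symmetry $n\mapsto 3-n$ further reduces the problem to $n\geq 1$.

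For $n\in\{1,2,3,4\}$, unwinding the exchange relations expresses $x_n$ as a polynomial divided by a monomial in $x_1,x_2$, hence automatically a Laurent polynomial. For $n\geq 5$ I would use strong induction on $n$. Writing $x_n=(x_{n-1}^{a_n}+1)/x_{n-2}$, with $a_n\in\{b,c\}$ determined by the parity of $n-1$, the inductive step reduces to the divisibility
\begin{equation*}
x_{n-2}\ \Big|\ x_{n-1}^{a_n}+1\quad\text{in the UFD }\ZZ[x_1^{\pm 1},x_2^{\pm 1}].
\end{equation*}
Unique factorization (valid because this ring is a localization of $\ZZ[x_1,x_2]$) reduces the claim to a local check: for each irreducible $p$ and each $k$ with $p^k\mid x_{n-2}$, one must verify $p^k\mid x_{n-1}^{a_n}+1$.

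For the local check I would exploit the two earlier exchanges
\begin{equation*}
x_{n-4}\,x_{n-2}=x_{n-3}^{a''}+1,\qquad x_{n-3}\,x_{n-1}=x_{n-2}^{a'}+1,
\end{equation*}
which by induction are Laurent polynomial identities. Reducing the first modulo $p^k$ gives $x_{n-3}^{a''}\equiv -1$, which in particular makes $x_{n-3}$ a unit modulo $p^k$. Reducing the second, and using $p^k\mid x_{n-2}^{a'}$, gives $x_{n-3}\,x_{n-1}\equiv 1$, so $x_{n-1}\equiv x_{n-3}^{-1}$. Since $n-1\equiv n-3\pmod 2$ we have $a_n=a''$, and therefore $x_{n-1}^{a_n}\equiv(x_{n-3}^{a''})^{-1}\equiv -1\pmod{p^k}$, as required.

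I expect the divisibility step to be the main obstacle: it genuinely requires invoking two consecutive earlier exchanges simultaneously and leveraging the UFD structure, and it is precisely the point at which the Laurent phenomenon could conceivably fail. A secondary technical point is confirming that the parities line up so that $a_n=a''$, and dispatching the base cases $n\in\{1,2,3,4\}$ explicitly before the induction can begin; but once these are in hand, the argument propagates cleanly to all $n\geq 5$, and hence via the symmetry $n\mapsto 3-n$ to $n\leq 0$.
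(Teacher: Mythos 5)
The paper does not prove this theorem; it simply cites \cite[Theorem 3.1]{fomin-zelevinsky1}, so there is no in-paper argument to compare against. Your blind proof is correct and is essentially the rank-2 specialization of Fomin--Zelevinsky's original Caterpillar-Lemma argument: reduce to showing each cluster variable is a Laurent polynomial in the initial cluster, then prove the key divisibility $x_{n-2}\mid x_{n-1}^{a_n}+1$ in the UFD $\ZZ[x_1^{\pm1},x_2^{\pm1}]$ by a local check at irreducibles, using two consecutive earlier exchange relations to compute $x_{n-3}^{a''}\equiv -1$ and $x_{n-1}\equiv x_{n-3}^{-1}$ modulo a prime power dividing $x_{n-2}$, and the parity match $a_n=a''$. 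All the steps check out: the base cases $n\le 4$ really do give a polynomial over a monomial (since $x_3$ has monomial denominator), the strong induction supplies all three earlier relations as honest Laurent-polynomial identities, $x_{n-3}$ becomes a unit mod $p^k$ because $x_{n-3}^{a''}\equiv -1$, and the $n\mapsto 3-n$ reflection together with the even/odd index shift (with the $b\leftrightarrow c$ swap) reduces the general initial cluster to $\{x_1,x_2\}$ and the general index to $n\ge 1$. This is a complete and correct proof, more self-contained than the paper's treatment, which defers entirely to the reference.
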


The cluster algebra $\cA(b,c)$ is of \emph{finite type} if the collection of all cluster variables is a finite set.  In \cite{fomin-zelevinsky2} Fomin and Zelevinsky classified cluster algebras of finite type.

\begin{theorem}[{\cite[Theorem 1.4]{fomin-zelevinsky2}}]
 The cluster algebra $\cA(b,c)$ is of finite type if and only if $bc\le3$.
\end{theorem}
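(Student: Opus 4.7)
The statement splits into two implications, both amenable to direct attack via the exchange relations themselves.

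For the \emph{if} direction, assume $bc\le 3$. Up to swapping $b$ and $c$ (which only reindexes the doubly infinite exchange sequence) the possibilities are $(b,c)\in\{(1,1),(1,2),(1,3)\}$, corresponding to the rank $2$ Cartan types $A_2$, $B_2$ and $G_2$ respectively. In each case I would iterate the exchange relations starting from $(x_1,x_2)$ and verify by direct Laurent-polynomial computation that the sequence $(x_m)_{m\in\ZZ}$ is periodic: specifically $x_{m+5}=x_m$ in type $A_2$, $x_{m+6}=x_m$ in type $B_2$, and $x_{m+8}=x_m$ in type $G_2$. Since the cluster variables are then indexed by a finite cyclic set, $\cA(b,c)$ is of finite type.

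For the \emph{only if} direction, assume $bc\ge 4$. By the Laurent Phenomenon we may write each $x_m$ in the unique reduced form $x_m = N_m(x_1,x_2)\,x_1^{-d_1(m)}x_2^{-d_2(m)}$ with $N_m\in\ZZ[x_1,x_2]$ coprime to $x_1 x_2$. The plan is to prove by induction on $|m|$ that the denominator vector $\mathbf{d}(m)=(d_1(m),d_2(m))$ obeys a linear recursion whose $2\times 2$ transition matrix is conjugate to the Coxeter element $s_1 s_2$ acting on the root lattice of the rank $2$ generalized Cartan matrix with off-diagonal entries $-b,-c$; its characteristic polynomial is $\lambda^2-(bc-2)\lambda+1$, with discriminant $bc(bc-4)\ge 0$. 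For $bc>4$ the larger real root exceeds $1$ and the $\mathbf{d}(m)$ grow exponentially; for $bc=4$ the matrix is a non-diagonalizable unipotent with a size-$2$ Jordan block, forcing linear growth. Either way $\|\mathbf{d}(m)\|\to\infty$, so the $x_m$ are pairwise distinct and $\cA(b,c)$ is of infinite type.

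The main obstacle is the inductive step in the only-if direction: one must check that no unforeseen factor of $x_1$ or $x_2$ appears in the exchange polynomial $N_{m-1}^{b}+1$ (or $N_{m-1}^{c}+1$), since such a factor could cancel against the denominator and corrupt the recursion. I would handle this by proving by simultaneous induction that both $N_{m-1}(0,x_2)$ and $N_{m-1}(x_1,0)$ are nonzero monomials, so that $N_{m-1}^{b}+1$ evaluates to a nonzero polynomial at $x_1=0$ and at $x_2=0$. Once the denominator recursion is in hand the spectral analysis of the $2\times 2$ transition matrix is elementary, and the \emph{if} direction reduces to routine bookkeeping in three small cases.
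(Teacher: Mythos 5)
The theorem you are asked to prove is not proved in this paper at all; the authors simply cite it as Theorem~1.4 of Fomin--Zelevinsky's \emph{Cluster algebras II: Finite type classification}. So there is no in-paper argument to compare against. In that reference the statement is obtained as the rank~$2$ instance of the full Dynkin classification of finite-type cluster algebras, a global argument organized around root systems and the Cartan counterpart of an exchange matrix. Your proposal is a different, purely rank~$2$, denominator-vector argument; it is closer in spirit to the analysis in Sherman--Zelevinsky (the reference \texttt{sz-Finite-Affine} in this bibliography), where exactly this transfer-matrix bookkeeping for the $d$-vectors is carried out.

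Your sketch is essentially correct. The finite cases: the periods $5$, $6$, $8$ for $(b,c)=(1,1),(1,2),(1,3)$ are right. The infinite cases: the two-step transition matrix for the $d$-vectors has trace $bc-2$ and determinant $1$, hence characteristic polynomial $\lambda^2-(bc-2)\lambda+1$ and discriminant $bc(bc-4)$, giving a spectral radius strictly greater than~$1$ for $bc>4$ and a nontrivial unipotent Jordan block for $bc=4$. The one place that deserves more than a wave is the ``no unforeseen factor'' step. Showing that $N_{m-1}(0,x_2)$ and $N_{m-1}(x_1,0)$ are nonzero monomials is not by itself enough: if, say, $N_{m-1}(0,x_2)$ were the constant monomial $-1$ and $b$ were odd, then $N_{m-1}(0,x_2)^b+1$ would vanish identically and the denominator recursion would break. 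You therefore also need to control the sign/leading coefficient of that monomial (e.g.\ that it is $+1$, which one can get by tracking that the top and bottom vertices of the Newton polygon of $N_m$ carry coefficient $1$, or by invoking positivity of rank~$2$ Laurent expansions). Once that is in place, the induction closes and the argument is complete. In short: your route is a legitimate, more elementary, rank-$2$-specific alternative to the cited general classification, with one sign/leading-coefficient lemma still to be supplied to make the denominator recursion airtight.
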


The proof of this theorem establishes a connection to the theory of rank 2 Kac-Moody root systems.  Thus we say $\cA(b,c)$ is of \emph{affine} (resp. \emph{wild}) type if $bc=4$ (resp. $bc\ge5$).  

An element $x\in\bigcap_{m\in\ZZ}\ZZ[x_m^{\pm1},x_{m+1}^{\pm1}]$ is called \emph{universally Laurent} since the expansion of $x$ in each cluster is a Laurent polynomial.  If the coefficients of the Laurent expansions of $x$ in each cluster are positive integers, then $x$ is called \emph{universally positive}.  A nonzero universally positive element in $\cA(b,c)$ is said to be \emph{indecomposable} if it cannot be expressed as a sum of two nonzero universally positive elements.

One of the main results of \cite{sz-Finite-Affine} is the following: if the commutative cluster algebra $\cA(b,c)$ is of finite or affine type, then the indecomposable universally positive elements form a $\ZZ$-basis in $\cA(b,c)$, moreover this basis contains all \emph{cluster monomials} $x_m^{a_1}x_{m+1}^{a_2}$ ($m\in\ZZ$, $a_1,a_2\in\ZZ_{\ge0}$).  However, in the wild types the situation becomes much more complicated; in particular, it is shown by Lee, Li, and Zelevinsky in \cite{LLZ2} that for $bc\ge5$ the indecomposable universally positive elements of $\cA(b,c)$ are linearly dependent.  The \emph{greedy basis} of $\cA(b,c)$ introduced in \cite{llz} is a subset  (a proper subset in wild types) of the indecomposable positive elements which has a manifestly positive combinatorial description. 

The elements of the greedy basis take on a particular form, which is motivated by a well-known pattern in the initial cluster expansion of cluster monomials.  An element $x \in \mathcal{A}(b,c)$ is \emph{pointed} at $(a_1, a_2) \in \ZZ^2$ if it can be written in the form
\begin{equation}
\label{eq:pointed-expansion}
x=x_1^{-a_1}  x_2^{-a_2} \sum_{p,q \geq 0} e(p,q) x_1^{bp} x_2^{cq}	
\end{equation}
with $e(0,0)=1$ and $e(p,q):=e_{a_1,a_2}(p,q) \in \ZZ$ for all $p,q\ge0$.

The following two theorems summarize the results from \cite{llz}.

\begin{theorem}
\label{proposition 1.6 of llzp}
For each $(a_{1},a_{2})\in\ZZ^{2}$, there exists a unique element in $\mathcal{A}(b,c)$ pointed at $(a_1,a_2)$ whose coefficients $e(p,q)$ satisfy the following recurrence relation:
\begin{equation}\label{eq:classical recurrence}
e(p,q)=  \begin{cases}
          \sum\limits_{k=1}^p(-1)^{k-1}e(p-k,q){ [a_2-cq]_++k-1\choose k} & \text{ if $ca_1q\le ba_2p$;}\\
          \sum\limits_{\ell=1}^q(-1)^{\ell-1}e(p,q-\ell){[a_1-bp]_++\ell-1\choose \ell} & \text{ if $ca_1q\ge ba_2p$.}
         \end{cases}
\end{equation}
 where we use the standard notation $[a]_+=\max(a,0)$.
\end{theorem}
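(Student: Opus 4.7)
The plan is to define the coefficient sequence $e(p,q)$ inductively by \eqref{eq:classical recurrence} and then verify that the resulting pointed expression is an element of $\mathcal{A}(b,c)$. First I would check well-definedness of the recurrence itself: on the ``diagonal'' locus $ca_1 q = ba_2 p$ both cases apply, so the two right-hand sides must be shown to coincide. I would prove this compatibility by induction on $p+q$, reducing it to a standard identity between two alternating binomial sums built from $\binom{[a_2-cq]_++k-1}{k}$ and $\binom{[a_1-bp]_++\ell-1}{\ell}$. Granted compatibility, the recurrence expresses $e(p,q)$ in terms of coefficients with strictly smaller $p$ or $q$, so together with $e(0,0)=1$ it determines all $e(p,q)$ uniquely. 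This takes care of uniqueness of the pointed element as a formal Laurent polynomial in $\ZZ[x_1^{\pm1},x_2^{\pm1}]$.

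The substantive content is existence in $\mathcal{A}(b,c)$. By the Laurent Phenomenon combined with the rank-two fact that $\mathcal{A}(b,c)$ coincides with the intersection of its Laurent rings, it suffices to show the pointed expansion is a Laurent polynomial in every cluster $\{x_m,x_{m+1}\}$. The two-step periodicity of the cluster pattern, combined with the built-in symmetry of the recurrence under $(b,c,a_1,a_2,p,q)\leftrightarrow(c,b,a_2,a_1,q,p)$, reduces the problem to verifying Laurentness in a single adjacent cluster, say $\{x_2,x_3\}$, and then iterating the argument.

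To do so I would substitute $x_1=(x_2^c+1)/x_3$ into the pointed expansion, expand $(x_2^c+1)^{bp-a_1}$ by the binomial theorem when it is a polynomial, and track the remaining terms where $bp<a_1$ as formal contributions that must cancel. Organizing the resulting double sum by the monomial $x_2^\alpha x_3^{-\beta}$, the condition that no negative power of $x_3$ survives translates into the vanishing of certain alternating binomial combinations of the $e(p,q)$. Induction on $p+q$ should allow these vanishing identities to be rewritten as precisely the case $ca_1q \ge ba_2 p$ of \eqref{eq:classical recurrence}; by the symmetry noted above, the case $ca_1 q \le ba_2 p$ handles Laurentness in the cluster $\{x_0,x_1\}$.

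The main obstacle is the combinatorial bookkeeping in this last step: tracking the interplay between the binomial coefficients coming from $(x_2^c+1)^{bp-a_1}$, the truncation $[a_2-cq]_+$ versus $a_2-cq$ when $a_2<cq$, and the alternating signs that produce the cancellations. In essence, the recurrence was reverse-engineered in \cite{llz} from exactly the identities required for Laurentness, so once the correct inductive bookkeeping is in place the recurrence and Laurentness become equivalent. All remaining clusters are then handled by induction using the cluster pattern's translation symmetry.
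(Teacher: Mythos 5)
The theorem you are proving is quoted in the paper from \cite{llz}; the paper itself re-derives it only as the $v=1$ specialization of the quantum Theorem~\ref{main theorem1}, whose proof occupies Sections 2--5. Comparing your proposal to that argument, there is one genuine gap that sinks the plan as written.

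Your first step asserts that compatibility on the diagonal $ca_1q=ba_2p$ --- i.e.\ that the two branches of \eqref{eq:classical recurrence} agree whenever both apply --- can be ``reduced to a standard identity between two alternating binomial sums'' and verified by induction on $p+q$. This is exactly the hard part of the theorem, and it is not a standard identity. The authors themselves flag the difficulty: in the proof of Proposition~\ref{prop:a1 a2 not positive}(4) they write that even in the special case $0<ba_2\le a_1$ ``it appears to be quite mysterious that these quantities coincide: we were unable to pass directly from one to the other,'' and they fall back on an algebraic detour. For the imaginary-root case (case (6) of Definition~\ref{df:PSR}) there is no closed form at all, and the entire machinery of Sections 4--5 exists precisely to avoid a head-on verification. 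The paper's route is: replace the overdetermined $(\le,\ge)$ recurrence by the two unambiguous variants $(\le,>)$ and $(<,\ge)$ (Definition~\ref{le,ge}), for which existence is automatic (Theorem~\ref{recursive-definition_u}); characterize the resulting upper and lower quasi-greedy elements by divisibility and support conditions (Proposition~\ref{quasi-greedy_axiom}); show via the automorphisms $\sigma_1,\sigma_2$ and a Minkowski-sum / product argument that the discrepancy between $\uc(p,q)$ and $\lc(p,q)$ on the segment $OB$ grows linearly in $n$ under $(a_1,a_2)\mapsto(na_1,na_2)$ (Lemma~\ref{lem:linear}); and finally use a degree-bound argument (Lemma~\ref{linear implies 0}) to force that discrepancy to be zero, whence $\UX=\LX$ and both satisfy $(\le,\ge)$ (Theorem~\ref{theorem:greedy exists:q}). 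Nothing in your sketch substitutes for this; calling the diagonal compatibility a routine binomial identity presupposes the conclusion.

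Your second step --- substituting $x_1=(x_2^c+1)/x_3$, organizing by monomials, and identifying the vanishing conditions with one branch of the recurrence --- is essentially sound and parallels the paper's Lemma~\ref{lem:divisibility} and Propositions~\ref{gen_recursion}, \ref{prop:equivalent}, which translate Laurentness in $\cT_0$ and $\cT_2$ into the divisibility conditions \eqref{eq:divisibility} and show these are equivalent to the recurrence together with a support bound. So the Laurentness half of your plan works. What is missing is a genuine argument for well-definedness of the recurrence on the diagonal; without it you have uniqueness of a hypothetical solution but not existence, and existence is the substance of the theorem.
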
 

We define the {\it greedy element} pointed at $(a_1,a_2)$, denoted $x[a_1,a_2]$, as the unique element determined by Theorem 
\ref{proposition 1.6 of llzp}.

\begin{theorem}
\label{main theorem-commutative}
Fix positive integers $b,c$.


{\rm(a)} The greedy elements $x[a_1,a_2]$ for $(a_1, a_2) \in \ZZ^2$ form a $\ZZ$-basis in $\cA(b,c)$, which we refer to as the \emph{greedy basis}.

{\rm(b)} The greedy basis is independent of the choice of an initial cluster.

{\rm(c)} The greedy basis contains all cluster monomials.

{\rm(d)}  Greedy elements are universally positive and indecomposable.
\end{theorem}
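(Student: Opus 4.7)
The plan is to handle the four parts in the order (a), (c), (d), (b), since the positivity result (d) will facilitate the cluster-invariance statement (b).

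For part (a), linear independence is immediate from the pointed condition (\ref{eq:pointed-expansion}): the greedy elements $x[a_1,a_2]$ have pairwise distinct leading Laurent monomials $x_1^{-a_1}x_2^{-a_2}$ in the initial cluster, so they are $\ZZ$-linearly independent. To show spanning, I would proceed by a well-founded induction: given $y\in\cA(b,c)$, the Laurent phenomenon puts $y\in\ZZ[x_1^{\pm1},x_2^{\pm1}]$ with finite support, and subtracting $c\cdot x[a_1,a_2]$ for a componentwise-minimal exponent $(-a_1,-a_2)$ in the support strictly diminishes a suitable ordinal invariant tracking both the minimal exponent and the overall support. Part (c) is a direct verification that each cluster monomial $x_m^{a_1}x_{m+1}^{a_2}$, expanded in the initial cluster and re-pointed at the appropriate index, satisfies the recurrence (\ref{eq:classical recurrence}); the cases $m\in\{1,2\}$ are trivial and the general case follows by induction on $m$ using the exchange relations together with the Laurent phenomenon.

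The main obstacle is part (d), universal positivity. The difficulty is that (\ref{eq:classical recurrence}) carries alternating signs, so $e(p,q)\ge 0$ is not manifest from the definition. My plan is to extract a positive combinatorial formula for $e(p,q)$, the natural candidate being a sum over compatible pairs of lattice paths of Dyck type in a $p\times q$ rectangle, where the compatibility rule depends on $(a_1,a_2)$; one then verifies that such a formula solves (\ref{eq:classical recurrence}) by a sign-reversing involution or bijective argument. Positivity of the expansion in every other cluster follows because the mutated greedy element admits an analogous pointed expansion governed by an analogous recurrence, to which the same combinatorial model applies. Indecomposability is then a minimality argument: a nontrivial decomposition $x[a_1,a_2]=u+v$ into universally positive elements would, upon comparison of pointed expansions, force one summand to be a pointed element at $(a_1,a_2)$ with coefficients strictly dominated by $e(p,q)$ at some $(p,q)$, contradicting the smallest-possible nature built into the recursion.

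Finally, for part (b), with (d) in hand one obtains an intrinsic description: $x[a_1,a_2]$ is the unique universally positive element pointed at $(a_1,a_2)$ whose coefficients $e(p,q)$ are componentwise minimal subject to universal positivity. Since this description references only the cluster algebra structure and not a distinguished cluster, cluster-invariance of the basis is automatic. Alternatively, one verifies directly that mutation in direction 1 transports $x[a_1,a_2]$ to the greedy element pointed at the piecewise-linear image of $(a_1,a_2)$ under the tropicalized exchange, by checking that the two recurrences coincide after the induced relabeling.
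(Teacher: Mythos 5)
Theorem \ref{main theorem-commutative} is not proven in this paper; it is quoted as background from \cite{llz}, and the present paper explicitly abandons the compatible-pair method of \cite{llz} for its quantum analogue because that method does not quantize (see the paragraph following Theorem \ref{main theorem-commutative}). So the relevant comparison is with \cite{llz}.

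Your sketch correctly identifies the linear-independence/spanning argument for (a), the cluster-monomial check for (c), and --- crucially --- the ``compatible pairs of lattice paths of Dyck type'' as the positivity mechanism for (d); this is indeed the model of \cite[Theorem 1.11]{llz}. However, there are substantive gaps.

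First, the order (a), (c), (d), (b) is circular. Universal positivity requires positivity of the expansion in \emph{every} cluster, and you obtain it in the non-initial clusters by asserting that ``the mutated greedy element admits an analogous pointed expansion governed by an analogous recurrence.'' That assertion \emph{is} part (b), which you have deferred. In \cite{llz}, and analogously in \S2--\S4 of this paper for the quantum case (e.g.\ Proposition \ref{prop:mutation invariant:q}), one first establishes mutation-equivariance via the involutions $\sigma_\ell$ together with a non-recursive characterization of greedy elements (a divisibility condition as in \eqref{eq:divisibility} plus a support condition as in Proposition \ref{prop:equivalent}), and only then deduces universal positivity from the combinatorial formula.

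Second, the intrinsic characterization you propose for (b) --- $x[a_1,a_2]$ as ``the unique universally positive pointed element with componentwise-minimal coefficients subject to universal positivity'' --- is neither the characterization \cite{llz} uses (theirs is minimality subject to \emph{membership in $\cA(b,c)$}, i.e.\ Laurentness in the neighboring clusters, together with a support constraint) nor something your argument establishes. Moreover, ``pointed at $(a_1,a_2)$'' is itself defined relative to the initial cluster, so even an intrinsic per-cluster characterization would not make cluster-invariance ``automatic''; one still has to match the two pointed families across the mutation. Your alternative route --- checking directly that mutation carries $x[a_1,a_2]$ to the greedy element at the tropical image of $(a_1,a_2)$ --- is the correct one and is what \cite{llz} carries out. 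Relatedly, the indecomposability argument appeals to ``the smallest-possible nature built into the recursion,'' but the recurrence \eqref{eq:classical recurrence} has alternating signs and no manifest minimality; establishing such minimality is part of the content of \cite{llz}, not a given.
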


Our goal in this work is to generalize the above two theorems to the setting of rank 2 quantum cluster algebras.  The proof of Theorem~\ref{main theorem-commutative} given in \cite{llz} uses combinatorial objects called \emph{compatible pairs} in an essential way (cf. \cite[Theorem 1.11]{llz}). Unfortunately this method has limitations in generalizing to the study of quantum cluster algebras. More precisely, if one could construct quantum greedy elements by assigning powers of $q$ to each compatible pair then the quantum greedy elements would have to be universally positive, which is unfortunately false (for instance, see  \cite[Section 3]{llrz}). So we shall take a completely different approach to prove the quantum analogue of this theorem.

\subsection{Rank 2 quantum cluster algebras}

We now define our main objects of study, namely quantum cluster algebras, and recall important fundamental facts related to these algebras.  We restrict our attention to rank 2 quantum cluster algebras where we can describe the setup in very concrete terms.  We follow (as much as possible) the notation and conventions of \cite{llz,triangular}.

 We  work in the quantum torus
 $$\cT:=\ZZ[v^{\pm 1}]\langle X_1^{\pm1},X_2^{\pm1}: X_2 X_1=v^2 X_1 X_2\rangle$$
 (this setup is related to the one in \cite{rupel1} which uses the formal variable $q$ instead of $v$ by setting $q = v^{-2}$).  There are many choices for quantizing cluster algebras, to rigidify the situation we require the quantum cluster algebra to be invariant under a certain involution.
The \emph{bar-involution} is the $\mathbb{Z}$-linear anti-automorphism of $\cT$ determined by $\overline{f}(v)=f(v^{-1})$ for  $f\in\mathbb{Z}[v^{\pm1}]$ and
\[\overline{fX_1^{a_1}X_2^{a_2}}=\overline{f}X_2^{a_2}X_1^{a_1}=v^{2a_1a_2}\overline{f}X_1^{a_1}X_2^{a_2}\quad\text{($a_1,a_2\in\ZZ$)}.\]
An element which is invariant under the bar-involution is said to be \emph{bar-invariant}.

Let $\mathcal{F}$ be the skew-field of fractions
of $\cT$.    The \emph{quantum cluster algebra} $\cA_v(b,c)$ is the $\mathbb{Z}[v^{\pm1}]$-subalgebra of $\mathcal{F}$ generated by the \emph{cluster variables} $X_m$ ($m \in\ZZ$) defined recursively by the \emph{exchange relations}
 \[X_{m+1}X_{m-1}=\begin{cases} v^{b}X_m^b+1 & \text{ if $m$ is odd;}\\ v^{c}X_m^c+1 & \text{ if $m$ is even.}\end{cases}\]

 By a simple induction one can easily check the following (quasi-)commutation relations between neighboring cluster variables $X_m,X_{m+1}$ in $\cA_v(b,c)$:
 \begin{equation}\label{eq:commutation-A11}
  X_{m+1} X_m = v^2 X_m X_{m+1} \quad (m \in \ZZ).
 \end{equation}
 It then follows from an easy induction that all cluster variables are bar-invariant, therefore $\cA_v(b,c)$ is also stable under the bar-involution.  Equation \eqref{eq:commutation-A11} implies that each \emph{cluster} $\{X_m, X_{m+1}\}$ generates a quantum torus
\[\cT_m:=\ZZ[v^{\pm1}]\langle X_m^{\pm1},X_{m+1}^{\pm1}: X_{m+1} X_m = v^2 X_m X_{m+1}\rangle.\]
 It is easy to see that the bar-involution does not depend on the choice of an initial quantum torus $\cT_m$.

 The appropriate quantum analogues of cluster monomials for $\cA_v(b,c)$ are the (bar-invariant) \emph{quantum cluster monomials} which are normalized monomials in the quantum tori $\cT_m$, more precisely they are
 \[X^{(a_1,a_2)}_m = v^{a_1 a_2} X_m^{a_1} X_{m+1}^{a_2} \quad (a_1, a_2 \in \ZZ_{\geq 0}, \,\, m \in \ZZ).\]
 For convenience we will often abbreviate $X^{(a_1,a_2)}:=X^{(a_1,a_2)}_1$. 

The following quantum analogue of the Strong Laurent Phenomenon was proven by Berenstein and Zelevinsky in \cite{quantum}.

 \begin{theorem}[{\cite[Theorem 5.1, Corollary 5.2 and Theorem 7.5]{quantum}}]\label{thm1}
  For any $m\in\ZZ$ we have $\cA_v(b,c)\subset\cT_m$.  Moreover, $$\cA_v(b,c)=\bigcap\limits_{m\in\ZZ}\cT_m=\bigcap\limits_{m=0}^{2}\cT_m.$$
 \end{theorem}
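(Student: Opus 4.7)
The plan is to establish the theorem in three steps corresponding to its three assertions: (i) the quantum Laurent inclusion $\cA_v(b,c)\subset \cT_m$, (ii) its upgrade to $\cA_v(b,c)=\bigcap_{m\in\ZZ}\cT_m$, and (iii) the reduction to three tori, $\bigcap_{m\in\ZZ}\cT_m=\bigcap_{m=0}^{2}\cT_m$.

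For (i), the translational symmetry of the defining recursion reduces the task to showing $X_n\in\cT_0$ for every $n\in\ZZ$. I would proceed by strong induction on $|n|$: the cases $n\in\{0,1\}$ are tautological, while $n=2$ and $n=-1$ follow at once from the exchange relations $X_2=(v^bX_1^b+1)X_0^{-1}$ and $X_{-1}=X_1^{-1}(v^cX_0^c+1)$. The subtlety at the inductive step is that although the exchange relation formally gives $X_{n+1}=(v^{\bullet}X_n^{\bullet}+1)X_{n-1}^{-1}$, the element $X_{n-1}$ is typically a genuine Laurent polynomial in $\cT_0$ rather than a unit, so one cannot simply invert it. The crux is therefore a \emph{quantum coprimality/divisibility lemma}: inside $\cT_0$, the element $X_{n-1}$ left-divides the binomial $v^{\bullet}X_n^{\bullet}+1$. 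I would carry this divisibility statement along as part of the inductive hypothesis, following the strategy of Fomin--Zelevinsky's Caterpillar Lemma with careful tracking of the $v$-powers produced by normal ordering. An alternative, more concrete route is to import Rupel's explicit formula expressing $X_n$ as a sum over quiver Grassmannians, which already displays $X_n$ as an element of $\cT_0$.

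For (ii), the inclusion $\subset$ is immediate from (i); for $\supset$, I would expand $Y\in\bigcap_m \cT_m$ in $\cT_1$ and use membership in the neighboring tori to confine the support of the expansion and match it against a spanning set for $\cA_v(b,c)$ built from normalized cluster monomials together with the exchange binomials. For (iii), the only nontrivial direction is that $Y\in \cT_0\cap \cT_1\cap \cT_2$ propagates to every $\cT_m$. Substituting $X_1=X_3^{-1}(v^cX_2^c+1)$ into the $\cT_1$-expansion of $Y$ produces potential denominators $(v^cX_2^c+1)^{-a}$ arising from monomials with negative power of $X_1$; these must cancel by virtue of the $\cT_0$- and $\cT_2$-memberships, after which the cleaned-up expression visibly lies in $\cT_3$. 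Iterating this step (and its mirror image) yields $Y\in \cT_m$ for every $m\in\ZZ$.

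The principal obstacle is the quantum divisibility step underpinning (i): establishing that $v^{\bullet}X_n^{\bullet}+1$ is left-divisible by $X_{n-1}$ in the noncommutative quantum torus $\cT_0$. The classical Caterpillar Lemma uses commutativity in essential ways, and its quantum analogue demands that every normal-ordering $v$-factor be accounted for. This bookkeeping, with no commutative counterpart, is the technical heart of Berenstein--Zelevinsky's argument and is where the bulk of the real work lies.
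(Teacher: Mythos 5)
The paper does not prove this theorem: it is imported verbatim from Berenstein--Zelevinsky's \emph{Quantum cluster algebras}, so the relevant comparison is with the argument there, which takes a structurally different route from yours. Berenstein--Zelevinsky introduce the upper bound $\mathcal{U} := \cT_0 \cap \cT_1 \cap \cT_2$ and prove (their Theorem 5.1) that $\mathcal{U}$ is invariant under mutation of the seed, under a coprimality hypothesis automatic in rank $2$; this single statement yields at once your step (iii), the reduction $\bigcap_{m\in\ZZ}\cT_m = \bigcap_{m=0}^{2}\cT_m$, and your step (i), the Laurent inclusion $\cA_v(b,c)\subset\cT_m$ (their Corollary 5.2). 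The remaining equality $\cA_v(b,c)=\mathcal{U}$, which is the hard inclusion in your step (ii), is their Theorem 7.5, proved by showing that for acyclic seeds the polynomial ring generated by $X_0,X_1,X_2,X_3$ has a standard-monomial basis that matches an explicit Laurent basis of $\mathcal{U}$. Your Caterpillar-style induction plus ad hoc substitution is a genuinely different decomposition, closer to Fomin--Zelevinsky's original Laurent argument than to the upper-bound machinery.

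As written, though, your proposal has two real gaps. For (i), everything rests on the asserted quantum divisibility lemma that $X_{n-1}$ left-divides $v^{\bullet}X_n^{\bullet}+1$ in $\cT_0$; you posit it and correctly flag it as the technical heart, but in the noncommutative torus left- versus right-divisibility, the $v$-powers produced by normal ordering, and the fact that $X_{n-1}$ is not a unit all need to be controlled, and none of that is done. More seriously, the inclusion $\bigcap_m\cT_m\subset\cA_v(b,c)$ in your step (ii) is the deepest part of the theorem --- it fails in higher rank without acyclicity --- and ``expand $Y$ in $\cT_1$ and match it against a spanning set'' is a heuristic, not an argument; the BZ proof of this inclusion occupies an entire section and requires the explicit standard-monomial comparison alluded to above. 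Neither gap is a minor omission: each is where the actual work lives.
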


A nonzero element of $\cA_v(b,c)$ is called {\it universally positive} if it lies  in $$\bigcap\nolimits_{m\in\ZZ}\ZZ_{\geq 0}[v^{\pm1}][X_m^{\pm1},X_{m+1}^{\pm1}].$$ 
A universally positive element in $\cA_v(b,c)$ is {\it indecomposable} if it cannot be expressed as a sum of two universally positive elements.

 As a very special case of the results of \cite{Q, Ef, DMSS, N-quiver, KQ} one may see that  cluster monomials are universally positive Laurent polynomials.
Explicit combinatorial expressions for the positive coefficients can be obtained from the results of \cite{ls, rupel2, LL:qgrass}.


\subsection{Quantum greedy bases}

This section contains the main result of the paper. Here we introduce the quantum greedy basis and present its nice properties.

Similar to the greedy elements, the elements of the quantum greedy basis take on the following particular form.
An element $X \in \mathcal{T}$ (resp. $X\in\mathcal{T}\otimes_\ZZ \mathbb{Q}$) is said to be {\it pointed} at $(a_1, a_2) \in \ZZ^2$ if it has the form
\begin{equation}
\label{eq:pointed-expansion-q}
X=\sum\limits_{p,q\ge0} e(p,q)X^{(-a_1+bp,-a_2+cq)}
\end{equation}
with $e(0,0)=1$ and $e(p,q)\in\ZZ[v^{\pm1}]$ (resp. $e(p,q)\in\QQ[v^{\pm1}]$) for all $p$ and $q$.
All quantum cluster variables, hence all quantum cluster monomials, are pointed.

For $n,k\in\ZZ$ and for $w=v^{b}$ or $v^{c}$, define the bar-invariant quantum numbers and quantum binomial coefficients by
 \[
 \aligned
 {}[n]_w&=\frac{w^n-w^{-n}}{w-w^{-1}}=\sgn(n)\big(w^{|n|-1}+w^{|n|-3}+\cdots+w^{-|n|+1});\\
 {n\brack k}_w&=\frac{[n]_w[n-1]_w\cdots[n-k+1]_w}{[k]_w[k-1]_w\cdots[1]_w}.
 \endaligned
 \]
 Note that ${n\brack k}_w$ is a Laurent polynomial in $w$ and hence in $v$ as well.  It is immediate that $[-n]_w=-[n]_w$ and thus ${-n\brack k}_w=(-1)^k{n+k-1\brack k}_w$.

\begin{definition}\label{le,ge}  A pointed element $\sum e(p,q)X^{(-a_1+bp,-a_2+cq)}$ is said to satisfy the recurrence relation $(\le,\ge)$  if the following recursion is satisfied for $(p,q)\ne(0,0)$:
  \begin{equation}\label{eq:recurrence}
e(p,q)=  \begin{cases}
          \sum\limits_{k=1}^p(-1)^{k-1}e(p-k,q){[a_2-cq]_++k-1\brack k}_{v^b} & \text{ if }ca_1q\le ba_2p;\\
          \sum\limits_{\ell=1}^q(-1)^{\ell-1}e(p,q-\ell){[a_1-bp]_++\ell-1\brack \ell}_{v^c} & \text{ if }ca_1q\ge ba_2p.\\
         \end{cases}
\end{equation}

We denote by $(\le,>)$  the recurrence relation obtained from  \eqref{eq:recurrence} by replacing ``$\ge$'' by ``$>$'',  and $(<,\ge)$ the recurrence relation obtained from  \eqref{eq:recurrence} by replacing ``$\le$'' by ``$<$''. 
\end{definition}

\begin{theorem}\label{main theorem1}
For each $(a_{1},a_{2})\in\ZZ^{2}$, there exists a unique element pointed at $(a_1,a_2)$ whose coefficients $e(p,q)$ satisfy the recurrence relation $(\le,\ge)$. 
\end{theorem}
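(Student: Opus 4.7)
The plan is to split the argument into uniqueness, existence, and finiteness of support. For uniqueness, note that for every $(p,q)\neq(0,0)$ at least one of $ca_1q\le ba_2p$ or $ca_1q\ge ba_2p$ holds, and the corresponding case of \eqref{eq:recurrence} expresses $e(p,q)$ as a $\ZZ[v^{\pm1}]$-combination of values at points with a strictly smaller $p$- or $q$-coordinate. Induction on $p+q$ starting from $e(0,0)=1$ therefore pins down every $e(p,q)\in\ZZ[v^{\pm1}]$ unambiguously, so the substance of the theorem lies in existence.

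For existence, I would define $e(p,q)$ recursively by applying whichever case of \eqref{eq:recurrence} is forced in the strict regions $ca_1q<ba_2p$ and $ca_1q>ba_2p$, and then show that on the overlap line $L=\{(p,q):ca_1q=ba_2p\}$ the two formulas yield the same element of $\ZZ[v^{\pm1}]$. I plan to argue by induction on $p+q$: for $(p,q)\in L$ with $(p,q)\neq(0,0)$, each $e(p-k,q)$ appearing in the Case~1 sum (with $k\ge1$) satisfies $ca_1q>ba_2(p-k)$ strictly, so by the inductive hypothesis it was defined via Case~2 and can be re-expanded accordingly; analogously each $e(p,q-\ell)$ in the Case~2 sum satisfies $ca_1(q-\ell)<ba_2p$ strictly, so it was defined via Case~1 and may be expanded that way. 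Substituting these back yields two double sums in the values $e(p-k,q-\ell)$ with $k,\ell\ge1$, whose coefficients are products of $v$-binomials. Equality of these double sums --- not term-by-term, but after iteratively unfolding all the way down to $e(0,0)=1$ --- reduces the consistency to a concrete identity of quantum binomials, which I would verify by a telescoping or $v$-Chu--Vandermonde-type argument.

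To ensure the constructed element lies in $\cT$, I would finally check finiteness of its support: the vanishing ${k-1\brack k}_{v^b}=0$ for $k\ge1$ forces $e(p,q)=0$ in Case~1 whenever $cq\ge a_2$, and symmetrically Case~2 forces $e(p,q)=0$ whenever $bp\ge a_1$. Combined with the recurrence, an induction then confines the support of $e$ to a bounded region, so the pointed element is a finite Laurent polynomial in $\cT$.

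The chief obstacle is the consistency identity on $L$. Setting $v=1$ recovers the classical identity implicit in Theorem~\ref{proposition 1.6 of llzp} proved in \cite{llz}, but for formal $v$ a genuine quantum identity is required: small sanity checks confirm that it is not term-by-term and emerges only after substitution of the recursively computed $e$-values, so the quantum corrections carried by the $v$-binomials must conspire with these substituted values. This is where the bulk of the technical work will lie, and is the step for which the authors advertise a departure from the compatible-pairs approach of \cite{llz}.
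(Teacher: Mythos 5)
Your outline correctly isolates the crux: uniqueness is an easy induction, and the whole weight of the theorem lies in showing that the two formulas in $(\le,\ge)$ agree on the overlap line $ca_1q=ba_2p$. But the method you propose for this step --- unfold the recursion down to $e(0,0)$ on both sides and verify a closed quantum-binomial identity by ``telescoping or $v$-Chu--Vandermonde'' --- is precisely the direct route that the authors explicitly tried and abandoned. In the proof of Proposition~\ref{prop:a1 a2 not positive}, which deals only with the \emph{simplest} nontrivial case $0<ba_2\le a_1$ where $e(p,q)={a_2\brack p}_{v^b}{a_1-bp\brack q}_{v^c}$ is known in closed form, they write that for $ca_1q\le ba_2p$ ``it appears to be quite mysterious that these quantities coincide: we were unable to pass directly from one to the other.'' If the identity cannot be closed even when the answer is an explicit product of two quantum binomials, there is no reason to believe a Chu--Vandermonde telescoping will succeed in the imaginary-root region where no closed form exists. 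So the central step of your proposal is stated as a task, not a proof, and it is a task the paper's authors concluded is not tractable head-on.

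The paper's actual proof is an end-run around this obstruction. It first establishes (Theorem~\ref{recursive-definition_u}) that the \emph{unambiguous} recursions $(\le,>)$ and $(<,\ge)$ each define a pointed element, the upper and lower quasi-greedy elements $\UX[a_1,a_2]$ and $\LX[a_1,a_2]$; no consistency check is needed for these because the tie on $L$ is broken one way or the other. It then shows each lies in $\cA_v(b,c)$ via the divisibility criterion (Lemma~\ref{lem:divisibility}), characterizes them by support conditions (Proposition~\ref{quasi-greedy_axiom}), proves they are swapped by the automorphisms $\sigma_1,\sigma_2$ (Proposition~\ref{prop:mutation invariant:q}), and finally proves $\UX[a_1,a_2]=\LX[a_1,a_2]$ (Theorem~\ref{theorem:greedy exists:q}). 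That last equality is the true replacement for your binomial identity, and it is established not by combinatorics but by the scaling trick of Lemma~\ref{lem:linear}: the difference $\uc(p,q)-\lc(p,q)$ is shown to grow linearly in $n$ under $(a_1,a_2)\mapsto(na_1,na_2)$, while Lemma~\ref{linear implies 0} uses the rigid Laurent-polynomial structure of quantum binomials (each coefficient has the form $\sum c_{ij}(v)\,v^{ba_2i+ca_1j}$) to show linear growth is impossible unless the difference vanishes identically. Theorem~\ref{main theorem1} then follows because the common element automatically satisfies the ambiguous $(\le,\ge)$ recursion. In short: your decomposition of the problem is right, your finiteness sketch is roughly right, but the key consistency step is left as a promissory note for an identity that is not known to be provable by elementary $q$-series manipulations, and the paper's resolution of it uses a completely different mechanism (automorphisms, support control, and a scaling-plus-degree argument) that does not appear in your plan.
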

\noindent Such an element is called a {\it quantum greedy element} and will be denoted by $X[a_{1},a_{2}]$.
\begin{corollary}\label{cor:greedy in cluster algebra}
  Each quantum greedy element $X[a_{1},a_{2}]$ is in $\mathcal{A}_v(b,c)$.
\end{corollary}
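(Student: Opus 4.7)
The plan is to reduce the corollary to the quantum Laurent phenomenon of Berenstein--Zelevinsky (Theorem~\ref{thm1}), which identifies $\cA_v(b,c)$ with the finite intersection $\cT_0\cap\cT_1\cap\cT_2$. The pointed form furnished by Theorem~\ref{main theorem1} already puts $X[a_1,a_2]$ inside the initial quantum torus $\cT=\cT_1$, so the task reduces to verifying that $X[a_1,a_2]$ also lies in the two neighboring quantum tori $\cT_0$ and $\cT_2$.

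To do this I would establish cluster-invariance of the greedy construction: if one repeats Definition~\ref{le,ge} and Theorem~\ref{main theorem1} inside a neighboring cluster $\{X_m,X_{m+1}\}$ for $m=0$ or $2$, the resulting greedy element $X_m[a_1',a_2']\in\cT_m$ coincides, after tropical mutation $(a_1,a_2)\mapsto(a_1',a_2')$ of the degree vector, with $X[a_1,a_2]$. Concretely, for each $(a_1,a_2)$ I would rewrite the pointed expansion of $X[a_1,a_2]$ using the exchange relation $X_3=(v^c X_2^c+1)X_1^{-1}$ (respectively $X_0=X_2^{-1}(v^b X_1^b+1)$), normalize the result into pointed form at $(a_1',a_2')$ in the new cluster, and verify that the resulting coefficients satisfy the $(\le,\ge)$-recurrence of Definition~\ref{le,ge} with the mutated parameters. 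The uniqueness clause of Theorem~\ref{main theorem1} then forces the rewritten element to equal $X_m[a_1',a_2']$, placing $X[a_1,a_2]$ in $\cT_m$.

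The technical heart and the main obstacle is the combinatorial identity that transports the recurrence across a single mutation. After substituting the exchange relation into the pointed expansion and reorganizing monomials via the quantum binomial theorem, the new pointed coefficients become sums weighted by bar-invariant quantum binomials ${\cdot\brack\cdot}_{v^c}$ or ${\cdot\brack\cdot}_{v^b}$; the claim that these satisfy the recurrence in the mutated variables reduces to a Chu--Vandermonde type identity for these binomials, to be verified separately in the two regions $ca_1q\le ba_2p$ and $ca_1q\ge ba_2p$. The tropical wall $ca_1q=ba_2p$ is the delicate point at which both branches of the recurrence must yield the same value; checking this consistency is precisely what makes the cluster-invariance statement well-posed. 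Once this invariance is established on one side, a symmetric argument handles the other neighboring cluster, giving $X[a_1,a_2]\in\cT_0\cap\cT_1\cap\cT_2=\cA_v(b,c)$.
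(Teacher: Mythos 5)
Your high-level reduction is the same one the paper uses implicitly: by Theorem~\ref{thm1} it suffices to show $X[a_1,a_2]\in\cT_0\cap\cT_2$, and you propose to check this by rewriting in the neighbouring cluster. But the mechanism you choose for verifying that membership is not the paper's, and it has a genuine gap. You want to show that the re-expansion of $X[a_1,a_2]$ in $\cT_0$ (or $\cT_2$) again satisfies the $(\le,\ge)$-recurrence with mutated degree vector and then invoke uniqueness; this amounts to proving cluster-invariance of the greedy basis (Theorem~\ref{main theorem2}(b)), a strictly stronger statement than the corollary. The step you label the ``technical heart''---a Chu--Vandermonde type identity transporting the recurrence across a mutation, in both regions and matching on the tropical wall---is simply asserted. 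The paper explicitly flags exactly this kind of identity as one the authors could not prove directly (see the discussion around \eqref{eq:case 4 recurrence}: ``it appears to be quite mysterious\ldots we were unable to pass directly from one to the other''), and that difficulty only grows in the imaginary-root regime where the recurrence has no closed form. So the proposal identifies the obstacle but does not overcome it.

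The paper takes a cleaner route that sidesteps the identity entirely. Lemma~\ref{lem:divisibility} shows, via the quantum binomial theorem and the power-series expansion \eqref{eq:d'}, that for a pointed element $Y\in\cT$, membership in $\cT_0$ (resp.\ $\cT_2$) is \emph{equivalent} to the row/column divisibility condition \eqref{eq:divisibility}. Proposition~\ref{gen_recursion} (a short generating-function argument) shows that the $(\le,\ge)$-recurrence automatically forces that divisibility, which is spelled out in the proof of Proposition~\ref{prop:equivalent}. So the corollary follows in one line from Lemma~\ref{lem:divisibility}, without any cross-cluster combinatorial identity and without invoking the (much harder) cluster-invariance of the greedy basis. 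You should replace the mutation-invariance plan with this divisibility criterion: show that the recurrence implies $\bigl(\sum_k{a_2-cq\brack k}_{v^b}t^k\bigr)\mid\sum_p e(p,q)t^p$ for $0\le q<a_2/c$, and the symmetric statement, and then cite Lemma~\ref{lem:divisibility} together with Theorem~\ref{thm1}.
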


The following theorem asserts that the quantum greedy elements possess all the desired properties described in Theorem \ref{main theorem-commutative} except for the positivity in part (d).

\begin{theorem}[Main Theorem]\label{main theorem2}
Fix positive integers $b,c$.

{\rm(a)} The quantum greedy elements $X[a_1,a_2]$ for $(a_1, a_2) \in \ZZ^2$ form a $\ZZ[v^{\pm1}]$-basis in $\cA_v(b,c)$, which we refer to as the \emph{quantum greedy basis}.

{\rm(b)} The quantum greedy basis is bar-invariant and independent of the choice of an initial cluster.

{\rm(c)} The quantum greedy basis contains all cluster monomials.

{\rm(d)} If $X[a_1,a_2]$ is universally positive, then it is indecomposable.

{\rm(e)} The quantum greedy basis specializes to the commutative greedy basis by the substitution $v=1$.
\end{theorem}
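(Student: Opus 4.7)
The plan is to deduce each part of Theorem~\ref{main theorem2} from Theorem~\ref{main theorem1} and Corollary~\ref{cor:greedy in cluster algebra}, handling the structural properties first and building towards the basis statement and the independence from the initial cluster. For the bar-invariance claim in (b), I would show by induction on $p+q$ that every coefficient $e(p,q)$ lies in the bar-invariant subring of $\ZZ[v^{\pm1}]$: the base case $e(0,0)=1$ is immediate, and the inductive step uses that each quantum binomial coefficient appearing in \eqref{eq:recurrence} is bar-invariant, while each normalized monomial $X^{(-a_1+bp,-a_2+cq)}$ is bar-invariant by construction. For the specialization claim (e), substituting $v=1$ reduces \eqref{eq:recurrence} termwise to the classical recurrence \eqref{eq:classical recurrence}, so uniqueness in Theorem~\ref{proposition 1.6 of llzp} yields $X[a_1,a_2]|_{v=1}=x[a_1,a_2]$.

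For (c), each quantum cluster monomial $X_m^{(a_1,a_2)}$ is a pointed, bar-invariant element of $\cA_v(b,c)$, with pointed degree read off from the exchange relations and the quasi-commutation \eqref{eq:commutation-A11}. I would then verify directly that the Laurent expansion of $X_m^{(a_1,a_2)}$ in the initial cluster satisfies the recurrence $(\le,\ge)$, using the explicit combinatorial formulas for quantum cluster variables cited in the introduction to carry out a finite mechanical check. Once the recurrence is verified, Theorem~\ref{main theorem1} identifies the cluster monomial with the corresponding greedy element.

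For the basis statement in (a), linear independence is immediate from pointedness, since distinct greedy elements have distinct leading monomials $X^{(-a_1,-a_2)}$. For spanning, given $Y\in\cA_v(b,c)$ I would choose an extremal Laurent monomial in its $\cT_1$-expansion adapted to the support cone of \eqref{eq:pointed-expansion-q} (for example, one minimizing the first coordinate and then the second), subtract the appropriate $\ZZ[v^{\pm1}]$-multiple of the greedy element whose leading degree matches, and observe that the support containment in \eqref{eq:pointed-expansion-q} ensures strict reduction; a well-founded induction on the extremal degree terminates the process. For the remaining part of (b), namely independence of the initial cluster, I would combine (e) with Theorem~\ref{main theorem-commutative}(b) to match the candidate quantum greedy elements at $v=1$, then invoke bar-invariance together with the uniqueness in Theorem~\ref{main theorem1} applied in each cluster to lift the match to general $v$.

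Finally for (d), if $X[a_1,a_2]=Y+Z$ with $Y,Z$ both universally positive, then expanding in $\cT_1$ the coefficient of the leading monomial $X^{(-a_1,-a_2)}$ splits as a sum of elements of $\ZZ_{\ge0}[v^{\pm1}]$ summing to $1$, forcing one summand to contribute trivially at that degree; comparing in a second cluster and exploiting the pointed support constraint then forces that summand to vanish entirely. I expect the principal obstacle to be the independence-of-initial-cluster portion of (b): the classical proof in \cite{llz} used the compatible-pair formalism which, as the authors have emphasized, does not transfer to the quantum setting, so the bootstrap from $v=1$ via bar-invariance and pointed uniqueness is the delicate technical step that will carry the bulk of the proof.
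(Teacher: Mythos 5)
Your proposal correctly handles parts (e) and the bar-invariance claim in (b), and your skeleton for (a) is in the right spirit, but there is a genuine gap in the heart of the argument: the independence-of-initial-cluster part of (b). You propose to match $X[a_1,a_2]$ expanded in a neighboring cluster against $X[a_1',a_2']$ by combining the $v=1$ specialization (Theorem~\ref{main theorem-commutative}(b)) with bar-invariance and uniqueness in Theorem~\ref{main theorem1}. This does not work, because a bar-invariant quantum lift of a fixed classical Laurent polynomial is far from unique: if $f(v)\in\ZZ[v^{\pm1}]$ is bar-invariant with $f(1)=0$ (e.g.\ $f=v+v^{-1}-2$), then adding $f(v)\cdot X^{(\alpha,\beta)}$ to a candidate lift preserves both bar-invariance and the specialization. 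Uniqueness in Theorem~\ref{main theorem1} fixes the element only once you know it satisfies the recurrence $(\le,\ge)$ in the \emph{new} cluster, which is exactly what you are trying to prove. This is precisely why the paper builds the entire apparatus of Sections~4--5: one defines the upper and lower quasi-greedy elements $\UX,\LX$ with the perturbed recurrences $(\le,>)$ and $(<,\ge)$, shows that $\sigma_1,\sigma_2$ swap these two families (Proposition~\ref{prop:mutation invariant:q}), and then proves $\UX=\LX$ via a delicate Minkowski-sum argument on products $\ULX[a_1,a_2]\cdot\ULX[ka_1,ka_2]$ together with a linearity-in-$n$ trick (Lemmas~\ref{lem:linear} and~\ref{linear implies 0}). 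Mutation invariance of the greedy basis then falls out of Theorem~\ref{theorem:greedy exists:q} and Proposition~\ref{mean invariant:q}. There is no shortcut around this machinery by appealing to $v=1$ and the bar-involution.

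Two of your other routes also diverge from the paper in ways worth noting. For (c), you propose to verify the recurrence $(\le,\ge)$ directly against the explicit combinatorial formulas for quantum cluster variables; the paper instead verifies only the ``nearby'' cases in Corollary~\ref{cor:explicit cluster monomials} and then uses the $\sigma_\ell$-equivariance of quasi-greedy elements (Corollary~\ref{cor:cluster monomials:q}) to propagate to all cluster monomials. A direct combinatorial check would be a substantial undertaking and is not attempted in the literature. For (d), your leading-coefficient argument in $\cT_1$ followed by ``comparing in a second cluster'' is not a complete argument: having the leading coefficient of one summand vanish in a single cluster does not force that summand to vanish identically, and you have not explained what replaces that step. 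The paper's proof of (d) is much cleaner: specialize at $v=1$, note that the specialization of a universally positive quantum element is universally positive classically, and invoke indecomposability from Theorem~\ref{main theorem-commutative}(d).
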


The proof of Theorem \ref{main theorem1} and \ref{main theorem2} is given at the end of \S5. Regarding Theorem \ref{main theorem2}(d), we propose the following conjecture based on extensive computation.

\begin{conjecture}
If $b|c$ or $c|b$, then all quantum greedy elements are universally positive, hence indecomposable.
\end{conjecture}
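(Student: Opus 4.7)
The plan is to establish universal positivity for $X[a_1,a_2]$ under the divisibility hypothesis. By the $(b,c)\leftrightarrow(c,b)$ symmetry of the recurrence, I may assume $b \mid c$ and write $c = bd$. By Theorem \ref{thm1} it suffices to verify positivity of the expansion of $X[a_1,a_2]$ in the three quantum tori $\cT_0, \cT_1, \cT_2$; and by Theorem \ref{main theorem2}(b), the expansion in each $\cT_m$ is itself a quantum greedy element (pointed at a different lattice point determined by the tropical mutation rule). So the task reduces to proving that the coefficients $e(p,q)$ in the pointed expansion \eqref{eq:pointed-expansion-q} lie in $\ZZ_{\ge 0}[v^{\pm 1}]$ for every $(a_1,a_2) \in \ZZ^2$, uniformly in the choice of initial cluster.

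Second, I would attempt to prove this positivity by induction on $p+q$ directly from the recurrence \eqref{eq:recurrence}. The inductive hypothesis gives positivity of each $e(p-k,q)$ and $e(p,q-\ell)$ individually, but the alternating signs in the sums are a genuine obstacle: the statement is false without the divisibility hypothesis. So the key technical input should be a rewriting of each alternating sum as a manifestly positive expression, depending only on the quantum binomial coefficients $\binom{[a_2-cq]_+ + k-1}{k}_{v^b}$ and $\binom{[a_1-bp]_+ + \ell-1}{\ell}_{v^c}$. Under the assumption $c = bd$, these are both polynomials in $v^b$ with integer coefficients and $[n]_{v^c} = [n]_{v^b}\cdot[dn]_{v^b}/[n]_{v^b}$-type identities may be exploited to combine the two sums. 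Concretely, I would look for a ``quantum compatible pairs'' model: a set $\mathcal{CP}(a_1,a_2;p,q)$ of combinatorial objects generalizing the compatible pairs of \cite{llz}, together with a weight $\mathrm{wt}\colon \mathcal{CP} \to \ZZ_{\ge 0}[v^{\pm 1}]$, such that $e(p,q) = \sum_{\pi \in \mathcal{CP}(a_1,a_2;p,q)} \mathrm{wt}(\pi)$, and verify that this generating function satisfies \eqref{eq:recurrence} (which by uniqueness in Theorem \ref{main theorem1} identifies it with the quantum greedy coefficient).

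The hardest step, and in some sense the entire content of the conjecture, is finding the correct weight function $\mathrm{wt}$. As the authors note, the naive $v$-weighting on compatible pairs fails to produce universally positive elements outside the divisibility cases (see \cite[Section 3]{llrz}), so the obstruction is sharp and the divisibility hypothesis must enter at exactly this point. I expect the right framework to be either representation-theoretic -- using a Caldero-Chapoton-type formula for a quantum cluster character applied to representations of a species whose symmetrization recovers $\mathcal{A}_v(b,c)$, where $b \mid c$ guarantees that the underlying modulation is well-behaved -- or else combinatorial, where $c = bd$ allows a ``$d$-fold refinement'' of the compatible pairs that splits the negative contributions among $d$ parallel positive channels. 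In either case the verification reduces to proving a finite list of bar-invariant $q$-binomial identities, and these identities should fail precisely when $\gcd(b,c) < \min(b,c)$, matching the scope of the conjecture.
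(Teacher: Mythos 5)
This statement is labeled a \emph{Conjecture} in the paper, and the paper offers no proof of it; the authors explicitly say it is ``based on extensive computation.'' So there is no paper proof to compare against, and your submission cannot be checked against one.

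More to the point, your proposal is not a proof. The first paragraph gives a legitimate reduction: by Theorem~\ref{main theorem2}(b) the expansion of $X[a_1,a_2]$ in any torus $\cT_m$ is again a pointed expansion of some quantum greedy element, so it does suffice to prove $e(p,q)\in\ZZ_{\ge0}[v^{\pm1}]$ for all $(a_1,a_2)$ in the initial cluster. (Note this uses mutation invariance of the basis, not Theorem~\ref{thm1}, which only controls Laurentness, not positivity, and only in three tori.) But everything after that is a plan, not an argument. You yourself write that ``the hardest step, and in some sense the entire content of the conjecture, is finding the correct weight function $\mathrm{wt}$,'' and then you do not construct it: you list two plausible frameworks (a quantum Caldero--Chapoton formula for a suitable species, or a ``$d$-fold refinement'' of compatible pairs) without carrying out either one or establishing the $q$-binomial identities you say the matter reduces to. The alternating signs in \eqref{eq:recurrence} are exactly the obstruction the authors flag when they explain why the compatible-pairs method of \cite{llz} does not quantize, and nothing in your sketch shows how the hypothesis $b\mid c$ actually defeats them. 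As written, this is an honest research outline, not a proof, and it should not be submitted as one.
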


The structure of the paper is as follows. Section 2 studies certain $\mathbb{Z}$-linear automorphisms on $\mathcal{A}_v(b,c)$.  Section 3 gives a non-recursive characterization of quantum greedy elements which highlights the importance of the observations of Section 2. Section 4 defines the weaker version of quantum greedy elements, namely the upper (and lower) quasi-greedy elements, and gives a non-recursive characterization for them similar to that given in Section 2.  We also show in Section 4 that an upper or lower quasi-greedy element is sent to an upper or lower quasi-greedy element by the automorphisms defined in Section 2. Section 5 studies the quasi-greedy elements, in particular we show that the upper and lower quasi-greedy elements are indeed identical and consequently prove the main theorems (Theorems \ref{main theorem1} and \ref{main theorem2}). In the Appendix (Section 6) we remind the reader of the quantum binomial theorem.\\

{\bf Acknowledgments:}
Most of the ideas toward this work from D.~R. and A.~Z. were had during their stay at the Mathematical Sciences Research Institute as part of the Cluster Algebras Program.  They would like to thank the MSRI for their hospitality and support.
Research supported in part by NSF grants DMS-0901367 (K.~L.) and DMS-1103813 (A.~Z.),  and by Oakland University URC Faculty Research Fellowship Award (L.~L.).
The authors would like to thank F.~Qin for valuable discussions. 

Sadly Andrei Zelevinsky passed away in the early stages of writing.  We hope to have achieved the clarity of exposition that Andrei's artful eye would have provided.

\section{Automorphisms acting on rank 2 quantum cluster algebras}\label{sec:symmetries}

We consider in this section a quantum analogue of the group of automorphisms of $\myAA(b,c)$  introduced in \cite{llz}. 

\begin{lemma}\label{automorphism group:q}
For each integer $\ell$, there is a well-defined $\mathbb{Z}$-linear involutive automorphism $\sigma_\ell$ on $\cT$ satisfying
$$\sigma_\ell(X_m) = X_{2 \ell-m},\quad \sigma_\ell(f(v))=f(v^{-1}).$$
It specializes to a $\mathbb{Z}$-linear automorphism of $\myAA(b,c)$. The group  $W$ generated by $\{\sigma_\ell\}_{\ell\in\mathbb{Z}}$ is a dihedral group generated by $\sigma_1$ and $\sigma_2$, and is finite if and only if $bc\le 3$.
\end{lemma}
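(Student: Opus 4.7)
My plan is to construct $\sigma_\ell$ by prescribing its action on the two initial cluster generators, verifying that the single quantum-torus relation is preserved, bootstrapping to all cluster variables via the exchange relations, and then reading off involutivity and the restriction to $\mathcal{A}_v(b,c)$.

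First, I would define $\sigma_\ell$ on the ambient skew-field $\FF$ as the ring endomorphism extending $X_1 \mapsto X_{2\ell-1}$, $X_2 \mapsto X_{2\ell-2}$, $v \mapsto v^{-1}$. The only relation to check for well-definedness is that the images satisfy the descendant of $X_2 X_1 = v^2 X_1 X_2$, namely
\[
X_{2\ell-2} X_{2\ell-1} = v^{-2} X_{2\ell-1} X_{2\ell-2},
\]
which is just \eqref{eq:commutation-A11} applied to the consecutive cluster variables $X_{2\ell-2}, X_{2\ell-1}$.

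Then I would prove by induction on $|m|$ that $\sigma_\ell(X_m) = X_{2\ell-m}$ for all $m \in \ZZ$. For the ascending step, applying $\sigma_\ell$ to the exchange relation $X_{m+2} X_m = v^d X_{m+1}^d + 1$ (with $d \in \{b,c\}$ determined by the parity of $m+1$) and invoking the induction hypothesis produces $\sigma_\ell(X_{m+2})\, X_{2\ell-m} = v^{-d} X_{2\ell-m-1}^d + 1$. Since the indices $m+1$ and $2\ell-m-1$ have the same parity, the exchange relation at the target reads $X_{2\ell-m} X_{2\ell-m-2} = v^d X_{2\ell-m-1}^d + 1$ with the \emph{same} exponent $d$; conjugating by $X_{2\ell-m}$ and invoking \eqref{eq:commutation-A11} rewrites this as $X_{2\ell-m-2} X_{2\ell-m} = v^{-d} X_{2\ell-m-1}^d + 1$, forcing $\sigma_\ell(X_{m+2}) = X_{2\ell-m-2}$. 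The descending step is symmetric. Involutivity is then immediate on the generators, and since $\mathcal{A}_v(b,c)$ is generated over $\ZZ$ by $\{X_m\}_{m \in \ZZ} \cup \{v^{\pm 1}\}$, which $\sigma_\ell$ permutes (resp.\ inverts), the map $\sigma_\ell$ restricts to an involutive $\ZZ$-algebra automorphism of $\mathcal{A}_v(b,c) \subset \cT$ by Theorem~\ref{thm1}.

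For the group structure, the formula $\sigma_\ell(X_m) = X_{2\ell-m}$ gives $\sigma_k\sigma_\ell(X_m) = X_{m+2(k-\ell)}$ while fixing $v$, so $\sigma_2\sigma_1$ acts on indices as translation by $+2$; the identity $\sigma_\ell = (\sigma_2\sigma_1)^{\ell-1}\sigma_1$ then exhibits $W = \langle \sigma_1, \sigma_2\rangle$, which together with $\sigma_1^2 = \sigma_2^2 = 1$ makes $W$ a quotient of the infinite dihedral group, hence dihedral. Finiteness of $W$ is equivalent to finite order of $\sigma_1\sigma_2$, equivalently to periodicity of the bi-infinite sequence $(X_m)_{m\in\ZZ}$; by the Fomin-Zelevinsky classification \cite[Theorem 1.4]{fomin-zelevinsky2}, together with the fact that quantum cluster variables in rank 2 share the period of their $v=1$ specializations, this happens exactly when $bc \le 3$. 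The main obstacle is the inductive verification $\sigma_\ell(X_m) = X_{2\ell-m}$, which requires careful tracking of $v$-exponents after the flip $v \mapsto v^{-1}$ and judicious use of the quasi-commutation relation; the remaining steps are essentially formal.
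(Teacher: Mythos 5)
Your proof is correct and takes the same essential approach as the paper: the well-definedness of $\sigma_\ell$ reduces to checking that the single quasi-commutation relation \eqref{eq:commutation-A11} is preserved. You additionally spell out the induction on $|m|$ that the paper dismisses as ``straightforward,'' and make explicit the group-theoretic and finiteness claims that the paper leaves implicit; these details are accurate and complete.
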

\begin{proof}
It suffices to check that $\sigma_\ell(X_{m+1}X_m)=v^{-2}\sigma_\ell(X_mX_{m+1})$. The proof is straightforward.
\end{proof}

Let $Y=\sum\limits_{p,q\ge0} d(p,q)X^{(-a_1+bp,-a_2+cq)}$ be an element in $\cT$ or $\cT\otimes_\ZZ\QQ$ pointed at $(a_1,a_2)$. We say that $Y$ satisfies the {\it divisibility condition} if
\begin{equation}\label{eq:divisibility}
\;  \left\{\begin{array}{ll} \sum\limits_{k\ge0}{a_2-cq\brack k}_{v^c}t^k\ \bigg|\ \sum\limits_{p\ge0} d(p,q)t^p,\text{\; for every }0\le q< a_2/c;\\
\sum\limits_{\ell\ge0}{a_1-bp\brack \ell}_{v^c}t^\ell \;\;\!\! \ \bigg| \ \sum\limits_{q\ge0} d(p,q)t^q,\text{\;  for every }0\le p<a_1/b. \end{array}\right.
\end{equation}

\begin{lemma}\label{lem:divisibility}
Let $Y$ be as above. 

{\rm(1)}  $\sigma_1(Y)\in\cT$  if and only if the first condition of \eqref{eq:divisibility} holds.

{\rm(2)} $\sigma_2(Y)\in\cT$  if and only if the second condition of \eqref{eq:divisibility} holds.

As a consequence, $Y\in A_v(b,c)$ if and only if the divisibility condition \eqref{eq:divisibility}  holds.  
\end{lemma}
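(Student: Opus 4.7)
The plan is to prove part (1) directly; part (2) then follows by the symmetric argument using $\sigma_2$ and the exchange relation $X_3 X_1 = v^c X_2^c + 1$ in place of $X_2 X_0 = v^b X_1^b + 1$. Since $\sigma_1$ is an involution whose underlying substitution carries $\cT_0$ bijectively onto $\cT = \cT_1$ (matching the quasi-commutations under $v \mapsto v^{-1}$), the condition $\sigma_1(Y) \in \cT$ is equivalent to $Y \in \cT_0$. So the task reduces to characterizing when the pointed expansion of $Y$, viewed in the skew field $\mathcal{F}$, actually lies in $\cT_0$.

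The central computation is to expand negative powers $X_2^{-s}$ in $\cT_0 \otimes_\ZZ \QQ$. Starting from $X_2^{-1} = X_0(v^b X_1^b + 1)^{-1}$ (an immediate consequence of $X_2 X_0 = v^b X_1^b + 1$) and using the identity $X_0(v^b X_1^b + 1) = (v^{-b} X_1^b + 1) X_0$ (which follows from $X_0 X_1 = v^{-2} X_1 X_0$), a straightforward induction yields
\[X_2^{-s} = F_s(X_1)^{-1}\, X_0^s \qquad (s \geq 1), \quad \text{where } F_s(t) := \prod_{j=0}^{s-1}\bigl(v^{-(2j+1)b}\, t + 1\bigr).\]
The slices of $Y$ indexed by $q$ with $-a_2 + cq \geq 0$ lie automatically in $\cT_0$. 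For each $0 \leq q < a_2/c$, set $s := a_2 - cq > 0$; collecting the $q$-slice, commuting $F_s(X_1)$ with $X_1^{-a_1+bp}$ (they lie in the abelian subring $\ZZ[v^{\pm 1}][X_1^{\pm 1}]$), and factoring $X_0^s$ to the right yields an expression
\[Y_q = v^{sa_1}\, F_s(X_1)^{-1}\, X_1^{-a_1}\, H_q(X_1)\, X_0^s, \quad \text{where } H_q(t) := \sum_{p \geq 0} d(p,q)\, v^{-sbp}\, t^p.\]
Since $v^{sa_1}$, $X_1^{-a_1}$, and $X_0^s$ are units in $\cT_0$ and $F_s, H_q$ lie in $\ZZ[v^{\pm 1}][X_1^b]$, the slice $Y_q$ lies in $\cT_0$ if and only if $F_s(t) \mid H_q(t)$ in $\ZZ[v^{\pm 1}][t]$ (with $t = X_1^b$).

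The last step is to match this with the first condition of the lemma. By the symmetric quantum binomial theorem recalled in the appendix,
\[P_s(t) := \sum_{k \geq 0}\!{s \brack k}_{v^b}\! t^k = \prod_{j=0}^{s-1}\bigl(1 + v^{(s-1-2j)b}\, t\bigr),\]
so a direct check shows that the monomial substitution $t \mapsto v^{-sb}\, t$ carries $P_s(t)$ to $F_s(t)$ and simultaneously carries $\sum_p d(p,q)\, t^p$ to $H_q(t)$. Since polynomial divisibility is invariant under such a substitution, $F_s(t) \mid H_q(t)$ is equivalent to $P_s(t) \mid \sum_p d(p,q)\, t^p$, which is exactly the first divisibility condition (with $s = a_2 - cq$). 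The concluding assertion then follows from Theorem~\ref{thm1}, which identifies $\cA_v(b,c)$ with $\cT_0 \cap \cT_1 \cap \cT_2$, together with the standing hypothesis $Y \in \cT = \cT_1$.

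The main obstacle I anticipate is the careful bookkeeping of the $v$-powers introduced by the iterated quasi-commutations $X_1^n X_0^m = v^{2nm} X_0^m X_1^n$ and by the normalization $X^{(\alpha,\beta)} = v^{\alpha\beta} X_1^\alpha X_2^\beta$; these are what produce the explicit scalar $v^{sa_1}$ and the monomial substitution $t \mapsto v^{-sb}\, t$ that must match the symmetric normalization of the $q$-binomial coefficients ${s \brack k}_{v^b}$.
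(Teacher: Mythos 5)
Your proof is correct and follows essentially the same route as the paper's: the paper establishes part (2) by applying $\sigma_2$, expanding $X_3^n$ via the quantum binomial theorem, and invoking the inversion identity of Lemma~\ref{cor:quantum convolution} to turn ``the transformed power series is a polynomial'' into the divisibility condition, whereas you establish part (1) by reducing $\sigma_1(Y)\in\cT$ to $Y\in\cT_0$, writing $X_2^{-s}=F_s(X_1^b)^{-1}X_0^s$ in explicit factored form, and matching $F_s$ with $\sum_k{s\brack k}_{v^b}t^k$ by a monomial substitution --- a dual bookkeeping of the same slice-by-slice criterion. One small observation: the first line of~\eqref{eq:divisibility} in the paper carries a typographical $v^c$ in the binomial subscript where $v^b$ is intended (consistent with the $v^b$ appearing in the first line of~\eqref{eq:recurrence}); your derivation correctly produces $v^b$.
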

\begin{proof}
We only prove (2), the proof of (1) is similar.  For every $0\le p\le a_2$, set $p'=a_2-p$, $a'_1=-a_1+ba_2$, and define $\{d'(p',q)\}_{q\ge0}$ using the power series expansion
\begin{equation}\label{eq:d'}
\sum_{q\ge0} d'(p',q)t^q=\Bigg(\sum\limits_{\ell\ge0}{-a_1+bp\brack \ell}_{v^c}t^\ell\Bigg)\Bigg(\sum_{q\ge0} d(p,q)t^q\Bigg),
\end{equation}

We claim that the following identity holds for every $p\ge0$:
\begin{equation}\label{eq:sigma2:q}
\sigma_2\Big(\sum_{q\ge0} d(p,q)X^{(-a_1+bp,-a_2+cq)}\Big)=
\sum_{q\ge0} d'(p',q)X^{(-a'_1+bp',-a_2+cq)}.
\end{equation}
Indeed, since $X_3=(v^cX_2^c+1)X_1^{-1}$, by Lemma~\ref{le:quantum binomial theorem} we have
\begin{align}\label{eq:variable power}
 X_3^n&=\sum\limits_{\ell\ge0}{n\brack \ell}_{v^c} v^{-nc\ell}X_1^{-n}X_2^{c\ell}
\end{align}
so that the left hand side of \eqref{eq:sigma2:q} is
$$\aligned
&=\sum_{q\ge0} d(p,q)v^{-(-a_1+bp)(-a_2+cq)}X_3^{-a_1+bp}X_2^{-a_2+cq}\\
&=\Bigg(\sum\limits_{\ell\ge0}{-a_1+bp\brack \ell}_{v^c}v^{(a_1-bp)c\ell}X_1^{a_1-bp}X_2^{c\ell}\Bigg)\Bigg(\sum_{q\ge0} d(p,q)v^{(a_1-bp)(-a_2+cq)}X_2^{-a_2+cq}\Bigg)\\
&=X_1^{a_1-bp}(v^{a_1-bp}X_2)^{-a_2}\Bigg(\sum\limits_{\ell\ge0}{-a_1+bp\brack \ell}_{v^c}\big(v^{(a_1-bp)c}X_2^c\big)^\ell\Bigg)\Bigg(\sum_{q\ge0} d(p,q)\big(v^{(a_1-bp)c}X_2^c\big)^q\Bigg)\\
&\stackrel{\eqref{eq:d'}}{=}X_1^{a_1-bp}(v^{a_1-bp}X_2)^{-a_2}\sum_{q\ge0} d'(p',q)\big(v^{(a_1-bp)c}X_2^c\big)^q\\
&=\sum_{q\ge0} d'(p',q)v^{(a_1-bp)(-a_2+cq)}X_1^{a_1-bp}X_2^{-a_2+cq},\\
\endaligned
$$
which is equal to the right hand side of \eqref{eq:sigma2:q}.

Combining the equalities \eqref{eq:sigma2:q} for all $p$ we get  
$$\sigma_2(Y)=\sum_{p',q\ge0}d'(p',q)X^{(-a'_1+bp',-a_2+cq)}.$$
It follows that $\sigma_2(Y)$ is in $\cT$ if and only if all but finitely many elements in $\{d'(p',q)\}_{p',q\ge0}$ are 0, in particular the left hand side of \eqref{eq:d'} must be finite.  Note that this finiteness is trivial for $bp\ge a_1$.  By Lemma~\ref{cor:quantum convolution} we have
\[\sum\limits_{\ell\ge0}{-a_1+bp\brack \ell}_{v^c}t^\ell=\Bigg(\sum\limits_{\ell\ge0}{a_1-bp\brack \ell}_{v^c}t^\ell\Bigg)^{-1}\]
and so finiteness is then equivalent to the divisibility condition in (2).  

The final claim follows immediately from Theorem \ref{thm1}.
\end{proof}

\section{An equivalent statement to Theorem~\ref{main theorem1}}

By analogy with the greedy elements $x[a_1,a_2]$ we will use the following definition in describing the support of our quantum greedy elements $X[a_1,a_2]$.

\begin{definition}\cite{llz}\label{df:PSR}
 For integers $a_1,a_2$, the \emph{pointed support region} $R_{\text{greedy}}[a_1,a_2]$ of $x[a_1,a_2]$ is a subset of $\mathbb{R}_{\ge0}^2$ defined in the following six cases.
\begin{enumerate}
\item If $a_1 \leq 0$ and $a_2 \leq 0$, then $R_{\text{greedy}}[a_1,a_2] := \{(0,0)\}$.

\item If $a_1 \leq 0 < a_2$, then $R_{\text{greedy}}[a_1,a_2] := \{(p,0)\in\mathbb{R}_{\ge0}^2\big|\; 0 \leq p \leq a_2\}$.

\item If $a_2 \leq 0 < a_1$, then $R_{\text{greedy}}[a_1,a_2] := \{(0,q)\in\mathbb{R}_{\ge0}^2\big|\; 0 \leq q \leq a_1\}$.

\item If $0<ba_2\leq a_1$, then
$$
R_{\text{greedy}}[a_{1},a_{2}]:=\big\{(p,q)\in\mathbb{R}_{\ge0}^2\big|\; 
p\le a_{2},\; q\le a_{1}-bp\big\}.
$$

\item If $0<ca_1\leq a_2$, then 
$$
R_{\text{greedy}}[a_{1},a_{2}]:=\big\{(p,q)\in\mathbb{R}_{\ge0}^2\big|\; 
q\le a_{1},\; p\le a_{2}-cq\big\}.
$$

\item If $0 < a_1 < ba_2$ and $0 < a_2 < ca_1$, then
$$\aligned
&R_{\text{greedy}}[a_{1},a_{2}]:=
\bigg\{(p,q)\in\mathbb{R}_{\ge0}^2\bigg|\; 0\le p< \frac{a_1}{b},\; q+\Big(b-\frac{ba_2}{ca_1}\Big)p<a_1\bigg\}\\
&\hspace{1.06in}\bigcup\bigg\{(p,q)\in\mathbb{R}_{\ge0}^2\bigg|\; 0\le q< \frac{a_2}{c},\; p+\Big(c-\frac{ca_1}{ba_2}\Big)q<a_2\bigg\}
\bigcup\Big\{(0,a_1),(a_2,0)\Big\}.
\endaligned
$$
\end{enumerate}
\end{definition}

\begin{remark}
(1) Here is a more geometric description (see Figure \ref{fig:PSR}). Denote
$O=(0,0), A=(a_2,0), B=(a_1/b, a_2/c), C=(0,a_1), D_{1}=(a_{2},a_{1}-ba_{2}), D_{2}=(a_{2}-ca_{1},a_{1}).$ Denote by $OAD_{1}C$ and $OAD_{2}C$ the regions of closed trapezoids, and denote by $OABC$ the region that includes the closed segments $OA$ and $OC$ but excludes the rest of the boundary. Then
$R_{\text{greedy}}[a_{1},a_{2}]$ is: $O$ in case (1); $OA$ in case (2); $OC$ in case (3); $OAD_{1}C$ in case (4); $OAD_{2}C$ in case (5); $OABC$ in case (6). Also note that $D_{1}\in OABC$ in case (4), and $D_{2}\in OABC$ in case (5).

(2) We compare some similar definitions we use in this paper and \cite{llz}.  For an element $x \in \mathcal{A}(b,c)$ pointed at $(a_1, a_2)$, express $x$ in two ways:
$$
x = \sum_{p,q}d(p,q)x_1^px_2^q\; =\; x_1^{-a_1}  x_2^{-a_2} \sum_{p,q \geq 0} e(p,q) x_1^{bp} x_2^{cq}
$$
The set $\{(p,q)\in\ZZ^{2}\; |\; d(p,q)\neq0 \}$ (resp. $\{(p,q)\in\ZZ_{\ge0}^{2}\; |\; e(p,q)\neq0 \}$)
is called the \emph{support} (resp. the \emph{pointed support}) of $x$. The support of $x$ is the image of the pointed support of $x$ under the transition map
$$\varphi: \mathbb{R}^2\to\mathbb{R}^2,\quad (p,q)\mapsto(-a_1+bp,-a_2+cq).$$
It was shown in \cite{llz} that the pointed support of the greedy element $x[a_1,a_2]$ is contained in the pointed support region $R_{\text{greedy}}[a_1,a_2]$, or equivalently, that the support of $x[a_1,a_2]$ is contained in the \emph{support region} of $x[a_1,a_2]$ defined as
$$S[a_1,a_2]:=\varphi(R_{\text{greedy}}[a_1,a_2])\subseteq \mathbb{R}^{2}.$$
\end{remark}

\begin{figure}[h]
\begin{tikzpicture}[scale=.7]
\begin{scope}[shift={(.5, 5.5)}]
\usetikzlibrary{patterns}
\draw (0,0) node[anchor=east] {\tiny $O$};
\draw[->] (0,0) -- (2.5,0)
node[above] {\tiny $p$};
\draw[->] (0,0) -- (0,2.5)
node[left] {\tiny $q$};
\fill (0,0) circle (2pt);
\draw (-1,-.5) circle (1.5pt);
\draw (-1,-.5) node[anchor=south] {\tiny $B$};
\draw (0.5,-.6) node[anchor=north] {\footnotesize (1) $a_1,a_2\le0$};
\end{scope}
\begin{scope}[shift={(6, 5.5)}]
\usetikzlibrary{patterns}
\draw (0,0) node[anchor=east] {\tiny$O$};
\draw (2,0) node[anchor=south] {\tiny$A$};
\draw[->] (0,0) -- (2.5,0)
node[above] {\tiny $p$};
\draw[->] (0,0) -- (0,2.5)
node[left] {\tiny $q$};
\fill (0,0) circle (1.5pt);
\fill (2,0) circle (1.5pt);
 \draw [very thick] (0,0) -- (2,0);
\draw (-1,.5) circle (1.5pt);
\draw (-1,.5) node[anchor=south] {\tiny $B$};
\draw (1,-.6) node[anchor=north] {\footnotesize (2) $a_1\le 0<a_2$};
\end{scope}
\begin{scope}[shift={(11.5,5.5)}]
\usetikzlibrary{patterns}
\draw (0,0) node[anchor=east] {\tiny$O$};
\draw (0,2) node[anchor=east] {\tiny$C$};
\draw[->] (0,0) -- (2.5,0)
node[above] {\tiny $p$};
\draw[->] (0,0) -- (0,2.5)
node[left] {\tiny $q$};
\fill (0,0) circle (1.5pt);
\fill (0,2) circle (1.5pt);
\draw [very thick] (0,0) -- (0,2);
\draw (1,-.3) circle (1.5pt);
\draw (1,-.3) node[anchor=west] {\tiny $B$};
\draw (1,-.6) node[anchor=north] {\footnotesize (3) $a_2\le0<a_1$};
\end{scope}
\usetikzlibrary{patterns}
\draw (0,3)--(1.5,1.5)--(1.5,0);
\draw[dashed](0,3)--(2.15,1.25) (2.15,1.15)--(1.5,0);
\draw (2.2,1.2) circle (2pt);
\draw (2.3,1.2) node[anchor=west] {\tiny$B$};
\fill [black!10] (0,3)--(1.5,1.5)--(1.5,0)--(0,0)--(0,3);
\draw (0,0) node[anchor=east] {\tiny$O$};
\draw (1.5,-.1) node[anchor=south west] {\tiny$A$};
\draw (1.4,1.5) node[anchor= south west] {\tiny$D_{1}$};
\draw (0,3) node[anchor=east] {\tiny$C$};
\draw[->] (0,0) -- (3.5,0)
node[above] {\tiny $p$};
\draw[->] (0,0) -- (0,3.5)
node[left] {\tiny $q$};
\fill (0,0) circle (1.5pt);
\fill (0,3) circle (1.5pt);
\fill (1.5,0) circle (1.5pt);
\fill (1.5,1.5) circle (1.5pt);
\draw (1.5,-.5) node[anchor=north] {\footnotesize (4) $0<ba_2\leq a_1$};
\begin{scope}[shift={(5.5,0)}]
\usetikzlibrary{patterns}
\draw (3,0)--(1.3,1)--(0,1);
\draw[dashed](3,0)--(0.95,1.45) (0.85,1.47)--(0,1);
\draw (0.9,1.5) circle (2pt);
\draw (0.9,1.5) node[anchor=south] {\tiny$B$};
\fill [black!10] (3,0)--(1.3,1)--(0,1)--(0,0)--(3,0);
\draw (0,0) node[anchor=east] {\tiny$O$};
\draw (2.7,0) node[anchor=south west] {\tiny$A$};
\draw (1.3,.9) node[anchor=south west] {\tiny$D_{2}$};
\draw (0,1) node[anchor=east] {\tiny$C$};
\draw[->] (0,0) -- (3.5,0)
node[above] {\tiny $p$};
\draw[->] (0,0) -- (0,3.5)
node[left] {\tiny $q$};
\fill (0,0) circle (1.5pt);
\fill (0,1) circle (1.5pt);
\fill (3,0) circle (1.5pt);
\fill (1.3,1) circle (1.5pt);
\draw (1.5,-.5) node[anchor=north] {\footnotesize (5) $0<ca_1\leq a_2$};
\end{scope}
\begin{scope}[shift={(11,0)}]
\usetikzlibrary{patterns}
\draw[dashed] (0,3)--(1.5,1.8)--(2.5,0);
\fill [black!10]  (0,3)--(1.5,1.77)--(2.5,0)--(0,0)--(0,3);
\draw (0,0) node[anchor=east] {\tiny$O$};
\draw (2.2,0) node[anchor=south west] {\tiny$A$};
\draw (1.5,1.8) node[anchor=south west] {\tiny$B$};
\draw (0,3) node[anchor=east] {\tiny$C$};
\draw[->] (0,0) -- (3.5,0)
node[above] {\tiny $p$};
\draw[->] (0,0) -- (0,3.5)
node[left] {\tiny $q$};
\fill (0,0) circle (1.5pt);
\fill (0,3) circle (1.5pt);
\fill (2.5,0) circle (1.5pt);
\draw (1.5,-.5) node[anchor=north] {\footnotesize (6) $0<a_1<ba_2$,};
\draw (1.5,-1) node[anchor=north] {\footnotesize \hspace{10pt} $0<a_2<ca_1$,};
\draw (1.5,-1.5) node[anchor=north] {\footnotesize \hspace{10pt} \tiny{$(a_1,a_2):$ non-imaginary root}};
\end{scope}
\begin{scope}[shift={(16,0)}]
\usetikzlibrary{patterns}
\draw[dashed] (0,3)--(.7,.78) (.78,.7)--(2.5,0);
\fill [black!10]  (0,3)--(.7,.7)--(2.5,0)--(0,0)--(0,3);
\draw (0,0) node[anchor=east] {\tiny$O$};
\draw (2.2,0) node[anchor=south west] {\tiny$A$};
\draw (.6,.6) node[anchor=south west] {\tiny$B$};
\draw (0,3) node[anchor=east] {\tiny$C$};
\draw[->] (0,0) -- (3.5,0)
node[above] {\tiny $p$};
\draw[->] (0,0) -- (0,3.5)
node[left] {\tiny $q$};
\fill (0,0) circle (1.5pt);
\fill (0,3) circle (1.5pt);
\fill (2.5,0) circle (1.5pt);
\draw (1.5,-.5) node[anchor=north] {\footnotesize (6) $0<a_1<ba_2$,};
\draw (1.5,-1) node[anchor=north] {\footnotesize \hspace{10pt} $0<a_2<ca_1$,};
\draw (1.5,-1.5) node[anchor=north] {\footnotesize \hspace{10pt} \tiny{imaginary root}};
\end{scope}
\end{tikzpicture}
\caption{\thickmuskip=0mu\small $R_{\text{greedy}}[a_1,a_2]$.} 
\label{fig:PSR}
\end{figure}
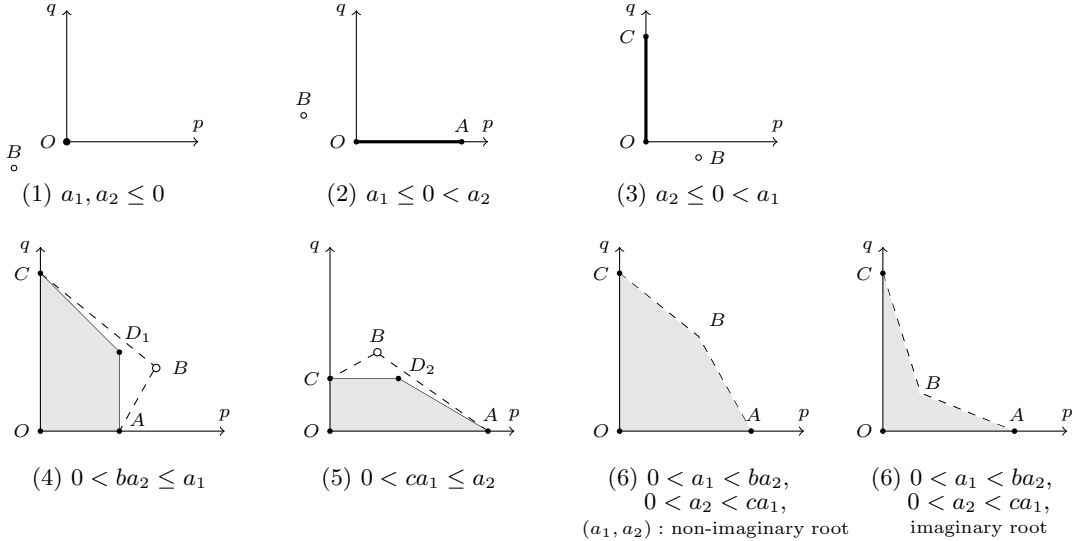

We aim to show that the quantum greedy elements defined by \eqref{eq:recurrence} satisfy the same support conditions.  The following general result will be useful for many of the cases.  In the following proposition and throughout the paper, the value of a vacuous product is by convention always equal to $1$. 
\begin{proposition}\label{gen_recursion}
Let $m,n$ be nonnegative integers,  $w=v^{d}\in\ZZ[v^{\pm1}]$ for a fixed integer $d$, and
$\{e_p\}_{p\ge0}$ be a sequence of elements in $\ZZ[v^{\pm1}]$ satisfying
\begin{equation}\label{general_recursion}
e_p=\sum\limits_{k=1}^p(-1)^{k-1}e_{p-k}{m+k-1\brack k}_w
\end{equation}
for all $p>n$. Then the sequence is uniquely determined by the first $n+1$ terms $e_{0},\dots,e_{n}$ and the following two conditions:
\smallskip

{\rm(a)} $e_p=0$ for all $p>n+m$,
\smallskip

{\rm(b)} $\sum\limits_{k=0}^m{m\brack k}_w t^k\ \bigg|\sum\limits_{p=0}^{m+n}e_pt^p.$
\end{proposition}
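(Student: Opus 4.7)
\medskip
\noindent\textbf{Proof plan.} The plan is to translate the recursion \eqref{general_recursion} into a factorization of the generating function $E(t):=\sum_{p\ge 0}e_p t^p$, and to read off conditions (a) and (b) directly from this factorization. Introduce the two series
\[
P(t):=\sum_{k=0}^m {m\brack k}_w t^k,\qquad Q(t):=\sum_{k\ge 0}(-1)^k{m+k-1\brack k}_w t^k.
\]
The key algebraic input is the identity $P(t)Q(t)=1$ in $\ZZ[v^{\pm1}][[t]]$, which follows from Lemma~\ref{cor:quantum convolution} combined with the elementary identity ${-m\brack k}_w=(-1)^k{m+k-1\brack k}_w$ already noted before Definition~\ref{le,ge}.

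Next, I rewrite the recursion \eqref{general_recursion} in the form $\sum_{k=0}^p(-1)^k e_{p-k}{m+k-1\brack k}_w=0$ for $p>n$ (the $k=0$ summand contributing $e_p$). This says exactly that the coefficient of $t^p$ in $Q(t)E(t)$ vanishes for every $p>n$, hence $Q(t)E(t)$ is a polynomial of degree at most $n$. Multiplying through by $P(t)$ yields
\[
E(t)=P(t)\cdot\bigl(Q(t)E(t)\bigr),
\]
which is a polynomial of degree at most $m+n$ (using that $m\ge 0$, so $\deg P=m$). Conditions (a) and (b) now follow at once: the degree bound gives $e_p=0$ for $p>m+n$, and the displayed factorization exhibits $P(t)$ as a divisor of $E(t)=\sum_{p=0}^{m+n}e_p t^p$.

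For the uniqueness assertion, suppose $\{e'_p\}_{p\ge 0}$ is any sequence with $e'_p=e_p$ for $p=0,\dots,n$ and satisfying (a), (b). Writing $E'(t):=\sum_{p=0}^{m+n}e'_p t^p$, condition (b) supplies a polynomial $R(t)=\sum_{p=0}^n r_p t^p$ of degree at most $n$ with $E'(t)=P(t)R(t)$. Since ${m\brack 0}_w=1$, the linear system relating $r_0,\dots,r_n$ to the prescribed values $e'_0,\dots,e'_n$ is lower triangular with unit diagonal, so $R(t)$ is uniquely determined, and therefore so are all the $e'_p$. Combined with the previous step, this shows that the sequence produced by the recursion is the unique one satisfying (a) and (b) with the given initial data.

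I do not foresee any serious obstacle. The argument is driven entirely by the inversion formula $P(t)Q(t)=1$, which is the quantum binomial identity recorded in the Appendix. The only points requiring care are the bookkeeping of the $k=0$ term when rewriting the recursion and the use of $m\ge 0$ to ensure $P(t)$ is an honest polynomial of degree exactly $m$ rather than a mere formal power series.
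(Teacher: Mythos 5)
Your proof is correct and follows essentially the same route as the paper: both translate the recursion into the statement that $Q(t)E(t)$ is a polynomial of degree at most $n$, then multiply by $P(t)$ to get the degree bound and the divisibility, using the inversion identity $P(t)Q(t)=1$ from Lemma~\ref{cor:quantum convolution}. The only (minor) difference is in the uniqueness step: the paper takes the difference of two candidate series and uses that $P(t)$ and $t^{n+1}$ are coprime, while you factor $E'(t)=P(t)R(t)$ and observe that the first $n+1$ coefficients determine $R(t)$ via a unit-triangular linear system; both are equally elementary and correct.
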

\begin{proof}
For $m=0$ there is nothing to show, so we assume $m\ge1$ for the rest of the proof.
Rewrite \eqref{general_recursion} as
\begin{equation}\label{new_eq:alternating sum=0}
\sum_{k=0}^p(-1)^k
e_{p-k}{m+k-1\brack k}_w=0.
\end{equation}
Consider two power series in $\ZZ[v^{\pm1}][[t]]$:
$$F(t):=\sum_{k\ge0}(-1)^k{m+k-1\brack k}_w t^k=\Bigg(\sum\limits_{k=0}^m{m\brack k}_w t^k\Bigg)^{-1}, \quad E(t):=\sum_{p\ge0} e_p t^p,$$
where the second equality defining $F(t)$ follows from Lemma~\ref{cor:quantum convolution}.  The left hand side of \eqref{new_eq:alternating sum=0} is the coefficient of $t^p$ in $F(t)E(t)$.
Thus \eqref{new_eq:alternating sum=0} being true for all $p>n$ is equivalent to saying that $F(t)E(t)$ is a polynomial in $t$ of degree at most $n$, which implies that 
$\Big(\sum\limits_{k=0}^m{m\brack k}_w t^k\Big)F(t)E(t)=E(t)$  is a polynomial in $t$ of degree at most $n+m$. Therefore $e_{p}=0$ for all $p>n+m$ and 
$\sum\limits_{k=0}^m{m\brack k}_w t^k$ divides $E(t)$. 

Next we show the uniqueness. If there is another sequence $\{e'_p\}_{p=0}^\infty$ with $e'_p=e_p$ for $0\le p\le n$ which also satisfies (a) and (b), then $\sum\limits_{k=0}^m{m\brack k}_w t^k$ divides
$$\sum\limits_{p\ge0}(e_p-e'_p)t^p=t^{n+1}\sum\limits_{p=n+1}^{n+m}(e_p-e'_p)t^{p-n-1}.$$
But $\sum\limits_{k=0}^m{m\brack k}_w t^k$ and $t^{n+1}$ are coprime, so $\sum\limits_{k=0}^m{m\brack k}_w t^k$ divides $\sum\limits_{p=n+1}^{n+m}(e_p-e'_p)t^{p-n-1}$ which has degree less than $m$ and hence must be 0, i.e. $e_p=e'_p$ for $p>n$.
\end{proof}
Using this result we may compute the coefficients $e(p,0)$ and $e(0,q)$ for any $a_1$ and $a_2$.
\begin{corollary}\label{cor:baseline}
 For any $(a_1,a_2)\in\ZZ^2$ the baseline greedy coefficients can be computed as
 \[e(p,0)={[a_2]_+\brack p}_{v^b}\quad\text{and}\quad e(0,q)={[a_1]_+\brack q}_{v^c}\quad\text{for all $p,q\ge0$.}\]
\end{corollary}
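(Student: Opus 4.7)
The plan is to apply Proposition~\ref{gen_recursion} to the one-parameter family obtained by freezing $q=0$. Fix $(a_1,a_2)\in\ZZ^2$ and set $e_p:=e(p,0)$; I split by the sign of $a_2$. If $a_2<0$, then for every $p\ge 1$ we have $ba_2 p<0=ca_1\cdot 0$, so only the second branch of \eqref{eq:recurrence} applies, and it is an empty sum. Hence $e(p,0)=0$ for all $p\ge 1$, matching the formula since $[a_2]_+=0$ gives ${0\brack p}_{v^b}=0$. If $a_2\ge 0$, then $ba_2 p\ge 0=ca_1\cdot 0$ for every $p\ge 1$, so the first branch of \eqref{eq:recurrence} applies. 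With $m=a_2$, $n=0$, $w=v^b$, this branch reads exactly as \eqref{general_recursion} for the sequence $\{e_p\}$.

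I would then invoke Proposition~\ref{gen_recursion}: the sequence $\{e(p,0)\}_{p\ge 0}$ is uniquely characterized by the initial term $e(0,0)=1$ together with conditions (a) and (b). It suffices to check that the candidate $\tilde e_p:={a_2\brack p}_{v^b}$ satisfies these. Indeed, $\tilde e_0={a_2\brack 0}_{v^b}=1$; condition (a) holds because ${a_2\brack p}_{v^b}=0$ for $p>a_2=m+n$; and condition (b) is trivial since $\sum_{p=0}^{a_2}\tilde e_p t^p$ equals $\sum_{k=0}^{a_2}{a_2\brack k}_{v^b} t^k$ and thus divides itself. The uniqueness part of Proposition~\ref{gen_recursion} then identifies $e(p,0)$ with ${a_2\brack p}_{v^b}={[a_2]_+\brack p}_{v^b}$ for all $p\ge 0$.

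The second formula $e(0,q)={[a_1]_+\brack q}_{v^c}$ follows by the symmetric argument: the recurrence \eqref{eq:recurrence} is invariant under the simultaneous swap of $p\leftrightarrow q$, $a_1\leftrightarrow a_2$, $b\leftrightarrow c$, so freezing $p=0$ instead of $q=0$ and running the above analysis verbatim (now with $m=a_1$, $w=v^c$) yields the claim.

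No genuine obstacle arises: once the correct branch of the recurrence is identified on each coordinate axis, the corollary is an immediate specialization of Proposition~\ref{gen_recursion}. The only mild subtlety is the case $a_2<0$ (resp.\ $a_1<0$), which requires reading off that the opposite branch of the recurrence produces an empty sum, a verification I would flag but not belabor.
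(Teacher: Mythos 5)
Your proof is correct and follows essentially the same route as the paper: freeze one coordinate, identify which branch of the recurrence fires on the axis, and for the nonnegative case invoke Proposition~\ref{gen_recursion} with $m=a_2$ (resp. $m=a_1$), $n=0$, $w=v^b$ (resp. $v^c$). The only cosmetic differences are your case split ($a_2<0$ vs.\ $a_2\ge 0$ instead of the paper's $a_1\le 0$, $a_2>0$) and that you verify the candidate against the uniqueness clause of the proposition, whereas the paper reads off the answer directly from conditions (a) and (b); both are equally valid instantiations of the same lemma.
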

\begin{proof}
 If $a_1\le0$, then for $p=0$ the first recurrence in \eqref{eq:recurrence} always applies and, since this summation is empty, we see that $e(0,q)=0={0\brack q}_{v^c}$ for $q>0$.  Since $e(0,0)=1={0\brack0}_{v^c}$, the equality $e(0,q)={[a_1]_+\brack q}_{v^c}$ holds in this case.  A similar argument establishes the equality $e(p,0)={[a_2]_+\brack p}_{v^b}$ when $a_2\le0$.
 
 Now assume $a_2>0$.  Then $ca_1q\le ba_2p$ always holds for $q=0$ so in computing $e(p,0)$ only the first recurrence in \eqref{eq:recurrence} applies.  To prove the equality $e(p,0)={a_{2}\brack p}_{v^{b}}$ we apply Proposition~\ref{gen_recursion} with $m=a_2$, $n=0$, $w=v^b$, and $e_p=e(p,0)$.  From part (a) we obtain $e(p,0)=0$ for $p>a_2$.  Now, since $e_0=e(0,0)=1$, part (b) gives $\sum\limits_{p=0}^{a_2}e_pt^p=\sum\limits_{p=0}^{a_2}{a_2\brack p}_{v^b}t^p$ and so $e(p,0)=e_p={a_2\brack p}_{v^b}$.  A similar argument proves $e(0,q)={a_1\brack q}_{v^c}$ for $a_1>0$.
\end{proof}

For certain classes of $(a_1,a_2)\in\ZZ$ the recursion \eqref{eq:recurrence} can be computed very explicitly.

\begin{proposition}\label{prop:a1 a2 not positive}
Let $(a_1,a_2)\in\ZZ$ and define $e(p,q)$ as in \eqref{eq:recurrence}.  We have the following:

{\rm(1)} If $a_{1}, a_{2}\le0$, then $e(p,q)=0$ for $(p,q)\neq(0,0)$.

{\rm(2)} If $a_{1}\le 0<a_{2}$, then $e(p,0)={a_{2}\brack p}_{v^{b}}$ and $e(p,q)=0$ for $q>0$.

{\rm(3)}  If $a_{2}\le 0<a_{1}$, then $e(0,q)={a_{1}\brack q}_{v^{c}}$ and $e(p,q)=0$ for $p>0$.

{\rm(4)} If $0<ba_2\leq a_1$, then $e(p,q)={a_2\brack p}_{v^b}{a_1-bp\brack q}_{v^c}$.

{\rm(5)} If $0<ca_1\leq a_2$, then $e(p,q)={a_2-cq\brack p}_{v^b}{a_1\brack q}_{v^c}$.
\end{proposition}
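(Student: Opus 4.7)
Parts (1), (2), (3) can be dispatched by induction on $p+q$. Corollary~\ref{cor:baseline} supplies the axis values $e(p,0)={[a_2]_+\brack p}_{v^b}$ and $e(0,q)={[a_1]_+\brack q}_{v^c}$; under the hypothesis $a_i\le 0$ the positive part $[a_i]_+$ vanishes, so these reduce to the claimed formulas on the axes. For an interior point $(p,q)$ with $p,q\ge 1$, at least one branch of \eqref{eq:recurrence} applies, and under (2) or (3) one branch applies strictly because $a_1$ and $a_2$ have opposite signs. Apply that branch: every summand $e(p-k,q)$ or $e(p,q-\ell)$ vanishes (by the axis case when $p-k=0$ or $q-\ell=0$, and by the inductive hypothesis otherwise), giving $e(p,q)=0$.

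Parts (4) and (5) are exchanged by the involution $(p,q,a_1,a_2,b,c)\leftrightarrow(q,p,a_2,a_1,c,b)$, which swaps the two branches of \eqref{eq:recurrence} along with the two target formulas; so it suffices to prove (4). The plan is to show that the candidate $F(p,q):={a_2\brack p}_{v^b}{a_1-bp\brack q}_{v^c}$ satisfies \eqref{eq:recurrence} in both branches wherever they apply; a $(p+q)$-induction analogous to the one above then identifies $e(p,q)$ with $F(p,q)$. For the second branch (when $ca_1q\ge ba_2p$) this is immediate: the factor ${a_2\brack p}_{v^b}$ pulls out, and what remains is the vanishing, for $q\ge 1$, of the $t^q$-coefficient of
\[
\bigl(\textstyle\sum_\ell{a_1-bp\brack \ell}_{v^c}t^\ell\bigr)\cdot\bigl(\textstyle\sum_\ell(-1)^\ell{a_1-bp+\ell-1\brack \ell}_{v^c}t^\ell\bigr)=1,
\]
by Lemma~\ref{cor:quantum convolution}.

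The main obstacle is the first branch ($ca_1q\le ba_2p$), where both binomial factors of $F(p-k,q)$ depend on $k$ and no direct convolution structure is visible. My plan is to recast the identity in generating-function form. Setting $A(t):=\sum_p F(p,q)t^p$ and $D(t):=\sum_k{[a_2-cq]_+\brack k}_{v^b}t^k$, Lemma~\ref{cor:quantum convolution} identifies $\sum_k(-1)^k{[a_2-cq]_++k-1\brack k}_{v^b}t^k$ with $D(t)^{-1}$, so the required identity becomes the divisibility
\[
D(t)\ \Bigm|\ A(t)\qquad\text{with quotient of $t$-degree strictly less than $\lceil ca_1q/(ba_2)\rceil$}
\]
in $\mathbb{Z}[v^{\pm 1}][t]$ --- a degree bound whose validity crucially uses the case-(4) hypothesis $a_1\ge ba_2$. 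The crux of the argument is this divisibility. To establish it I would expand ${a_1-bp\brack q}_{v^c}$ by the quantum Vandermonde identity so as to isolate the $p$-dependence in a single $v^c$-binomial, and then collapse the resulting $p$-sum via a second application of Lemma~\ref{cor:quantum convolution}; managing the bookkeeping of the mixed $v^b$ and $v^c$ factors throughout this expansion is the technical heart of the proof.
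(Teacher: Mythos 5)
Your parts (1)--(3) and the second branch of part (4) are correct and match the paper's argument: induction from the axis values in Corollary~\ref{cor:baseline}, and a direct convolution via Lemma~\ref{cor:quantum convolution} for the branch $ca_1q\ge ba_2p$. The symmetry $(p,q,a_1,a_2,b,c)\leftrightarrow(q,p,a_2,a_1,c,b)$ reducing (5) to (4) is also sound.

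The gap is the first branch of part (4), $ca_1q\le ba_2p$, which is precisely the hard part. You correctly identify it as the "main obstacle," but what you give is a plan, not a proof: you would need to actually carry out the quantum Vandermonde expansion of ${a_1-bp\brack q}_{v^c}$, collapse the $p$-sum, and track the mixed $v^b$, $v^c$ weights and the noncommutative correction factors arising from Lemma~\ref{cor:quantum convolution} (which carries an extra $w^{nr-ms}$ inside the sum), then verify the degree bound for the quotient. None of that is done. It is worth noting that the authors explicitly report that this direct route defeated them --- "it appears to be quite mysterious that these quantities coincide: we were unable to pass directly from one to the other" --- and the paper sidesteps it entirely: it computes the Laurent expansion of the cluster monomial $X_3^{(a_1-ba_2,a_2)}$ explicitly (using the quantum binomial theorem), observes that its pointed coefficients are exactly ${a_2\brack p}_{v^b}{a_1-bp\brack q}_{v^c}$, applies the automorphism $\sigma_1$ from Section~\ref{sec:symmetries}, and reads off the vanishing of the relevant coefficients from the known support region, which is equivalent to the first-branch recurrence by the generating-function reformulation in Proposition~\ref{gen_recursion}. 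So the paper's method is indirect and structural (cluster monomials plus symmetries), whereas you are proposing a frontal algebraic attack that, as written, has not been shown to go through. To complete your approach you would need to supply the omitted computation; absent that, the proposal does not prove cases (4) and (5).
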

\begin{proof}
 In Corollary~\ref{cor:baseline} we have already established all of the desired formulas for $e(p,0)$ and $e(0,q)$.  

For $a_{1}, a_{2}\le0$ both summations in \eqref{eq:recurrence} are empty and thus $e(p,q)=0$ for $(p,q)\ne(0,0)$.  For $a_{1}\le 0<a_{2}$, Corollary~\ref{cor:baseline} gives $e(0,q)=0$ for $q>0$ and then a simple induction on $p$ shows that $e(p,q)=0$ whenever $q>0$.  For $a_{2}\le 0<a_{1}$, we may similarly conclude that $e(p,q)=0$ whenever $p>0$.

Suppose $0<ba_2\leq a_1$.  We aim to show that $e(p,q)={a_2\brack p}_{v^b}{a_1-bp\brack q}_{v^c}$ satisfies \eqref{eq:recurrence}.  Indeed, substituting this expression into \eqref{eq:recurrence} gives
\begin{equation}\label{eq:case 4 recurrence}
{a_2\brack p}_{v^b}{a_1-bp\brack q}_{v^c} =  \begin{cases}
          \sum\limits_{k=1}^p(-1)^{k-1}{a_2\brack p-k}_{v^b}{a_1-b(p-k)\brack q}_{v^c}{[a_2-cq]_++k-1\brack k}_{v^b} & \text{ if }ca_1q\le ba_2p;\\
          \sum\limits_{\ell=1}^q(-1)^{\ell-1}{a_2\brack p}_{v^b}{a_1-bp\brack q-\ell}_{v^c}{[a_1-bp]_++\ell-1\brack \ell}_{v^c} & \text{ if }ca_1q\ge ba_2p.\\
         \end{cases}
\end{equation}
For $ca_1q\ge ba_2p$ this can be shown directly:
\begin{align*}
 &\sum\limits_{\ell=1}^q(-1)^{\ell-1}{a_2\brack p}_{v^b}{a_1-bp\brack q-\ell}_{v^c}{a_1-bp+\ell-1\brack \ell}_{v^c}\\
 &\quad\quad\quad\quad=-{a_2\brack p}_{v^b}\sum\limits_{\ell=1}^q{a_1-bp\brack q-\ell}_{v^c}{bp-a_1\brack \ell}_{v^c}={a_2\brack p}_{v^b}{a_1-bp\brack q}_{v^c},
\end{align*}
where the last equality follows from Corollary~\ref{cor:quantum convolution} with $m=a_1-bp$ and $n=-m$.  

However, for $ca_1q\le ba_2p$ it appears to be quite mysterious that these quantities coincide: we were unable to pass directly from one to the other as we did in the preceding case.  We will leave this as a challenge for the eager reader and proceed by another method, that is we explicitly compute an element of $\cA_v(b,c)$ having these coefficients and utilize the symmetries introduced in Section~\ref{sec:symmetries}.

As in \eqref{eq:variable power} we have
\begin{align*}
 X_3^n=\sum\limits_{k=0}^n{n\brack k}_{v^c}X^{(-n,ck)}\quad\text{and}\quad X_4^n=\sum\limits_{p=0}^n\sum\limits_{\ell=0}^{bp}{n\brack p}_{v^b}{bp\brack \ell}_{v^c}X^{(-bp,-n+c\ell)}
\end{align*}
and thus we may expand $X_3^{(a_1-ba_2,a_2)}$ as follows:
\begin{align*}
 &v^{a_2(a_1-ba_2)}\sum\limits_{k=0}^{a_1-ba_2}\sum\limits_{p=0}^{a_2}\sum\limits_{\ell=0}^{bp}{a_1-ba_2\brack k}_{v^c}{a_2\brack p}_{v^b}{bp\brack \ell}_{v^c}X^{(-a_1+ba_2,ck)}X^{(-bp,-a_2+c\ell)}\\
 &=\sum\limits_{p=0}^{a_2}\sum\limits_{k=0}^{a_1-ba_2}\sum\limits_{\ell=0}^{bp}{a_2\brack p}_{v^b}{a_1-ba_2\brack k}_{v^c}{bp\brack \ell}_{v^c}v^{c\ell(a_1-ba_2)-bckp}X^{(-a_1+b(a_2-p),-a_2+c(k+\ell))}\\
 &=\sum\limits_{p=0}^{a_2}\sum\limits_{q=0}^{a_1-b(a_2-p)}{a_2\brack p}_{v^b}\left(\sum\limits_{k+\ell=q}v^{c\ell(a_1-ba_2)-ckbp}{a_1-ba_2\brack k}_{v^c}{bp\brack \ell}_{v^c}\right)X^{(-a_1+b(a_2-p),-a_2+cq)}\\
 &=\sum\limits_{p=0}^{a_2}\sum\limits_{q=0}^{a_1-b(a_2-p)}{a_2\brack p}_{v^b}{a_1-b(a_2-p)\brack q}_{v^c}X^{(-a_1+b(a_2-p),-a_2+cq)}\\
 &=\sum\limits_{p=0}^{a_2}\sum\limits_{q=0}^{a_1-bp}{a_2\brack p}_{v^b}{a_1-bp\brack q}_{v^c}X^{(-a_1+bp,-a_2+cq)},
\end{align*}
where the third equality used Lemma \ref{cor:quantum convolution}.  Applying the symmetry $\sigma_1$ to this element we get $\sigma_1\big(X_3^{(a_1-ba_2,a_2)}\big)=X_{-2}^{(a_2,a_1-ba_2)}=\sum\limits_{p,q\ge0}d'(p,q')X^{(-a_1+bp,a_2-ca_1+cq')}$ whose pointed support region is $R_{\rm greedy}[a_1,ca_1-a_2]$, an example of the non-imaginary case (6) from Definition~\ref{df:PSR}.  In particular, its coefficient $d'(p,q')$ is zero for $q'+\Big(b-\frac{b(ca_1-a_2)}{ca_1}\Big)p\ge a_1$ which is equivalent to $ca_1q\le ba_2p$ for $q'=a_1-q$.  But notice that, as in \eqref{eq:d'}, the coefficient $d'(p,q')$ is exactly $\sum\limits_{k=0}^p(-1)^k{a_2\brack p-k}_{v^b}{a_1-b(p-k)\brack q}_{v^c}{a_2-cq+k-1\brack k}_{v^b}$ and the claim follows.

The claim for $0<ca_1\le a_2$ is established by a similar argument involving $X_{-1}^{(a_{1},a_2-ca_{1})}$.
\end{proof}

\begin{corollary}\label{cor:explicit cluster monomials}
 
Let $(a_1,a_2)\in\ZZ$ and define $e(p,q)$ as in \eqref{eq:recurrence}.  We may compute quantum greedy elements as follows:

{\rm(1)} If $a_{1}, a_{2}\le0$, then $X[a_{1},a_{2}]=X_1^{(-a_{1},-a_{2})}$.

{\rm(2)} If $a_{1}\le 0<a_{2}$, then $X[a_{1},a_{2}]=X_{0}^{(a_{2},-a_{1})}$.

{\rm(3)}  If $a_{2}\le 0<a_{1}$, then $X[a_{1},a_{2}]=X_{2}^{(-a_{2},a_{1})}$.

{\rm(4)} If $0<ba_2\leq a_1$, then $X[a_{1},a_{2}]=X_{3}^{(a_1-ba_{2},a_{2})}$.

{\rm(5)} If $0<ca_1\leq a_2$, then $X[a_{1},a_{2}]=X_{-1}^{(a_{1},a_2-ca_{1})}$.
\end{corollary}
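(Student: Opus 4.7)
The plan is to deduce this corollary directly from Proposition \ref{prop:a1 a2 not positive}, which has already pinned down the pointed-expansion coefficients $e(p,q)$ of $X[a_1,a_2]$ in each of the five cases. Combined with the uniqueness built into the recurrence \eqref{eq:recurrence} (as in Proposition \ref{gen_recursion}), it suffices to verify that each proposed cluster monomial on the right-hand side, when expanded in the initial cluster $\{X_1,X_2\}$, admits a pointed expansion at $(a_1,a_2)$ whose coefficients agree with the formulas of Proposition \ref{prop:a1 a2 not positive}.

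Case (1) is immediate, since $X_1^{(-a_1,-a_2)}=X^{(-a_1,-a_2)}$ by definition, giving the pointed form with $e(0,0)=1$ and all other coefficients zero. Case (4) has in fact been carried out inside the proof of Proposition \ref{prop:a1 a2 not positive}: the explicit expansion of $X_3^{(a_1-ba_2,a_2)}$ in the initial cluster was computed there to be $\sum_{p,q}{a_2\brack p}_{v^b}{a_1-bp\brack q}_{v^c}X^{(-a_1+bp,-a_2+cq)}$, exactly matching Proposition \ref{prop:a1 a2 not positive}(4).

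For cases (2), (3), and (5) I would perform analogous but shorter expansions using the exchange relations and the quantum binomial theorem (Lemma \ref{le:quantum binomial theorem}). Case (2) needs only the single substitution $X_0=X_2^{-1}(v^bX_1^b+1)$ in $X_0^{(a_2,-a_1)}=v^{-a_1a_2}X_0^{a_2}X_1^{-a_1}$; one application of the quantum binomial theorem produces $\sum_p{a_2\brack p}_{v^b}X^{(-a_1+bp,-a_2)}$, matching Proposition \ref{prop:a1 a2 not positive}(2). Case (3) is mirror-symmetric, using $X_3=X_1^{-1}(v^cX_2^c+1)$. Case (5) is the most involved: the cluster monomial $X_{-1}^{(a_1,a_2-ca_1)}$ lies two mutation steps from the initial cluster, so expanding it requires the iterated substitution $X_{-1}=X_1^{-1}(v^cX_0^c+1)$ followed by $X_0=X_2^{-1}(v^bX_1^b+1)$. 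This yields a triple sum, which I would collapse into ${a_2-cq\brack p}_{v^b}{a_1\brack q}_{v^c}$ via Lemma \ref{cor:quantum convolution}, exactly as in the case (4) computation of Proposition \ref{prop:a1 a2 not positive}, but with the roles of $b$ and $c$ interchanged.

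The chief obstacle is the bookkeeping in case (5): tracking all powers of $v$ arising from the quasi-commutation $X_{m+1}X_m=v^2X_mX_{m+1}$ while commuting $X_0^{a_2-ca_1}$ past $X_1^{a_1}$, and then ensuring that the triple sum resums correctly. Because the required manipulation parallels the one done for $X_3^{(a_1-ba_2,a_2)}$ in Proposition \ref{prop:a1 a2 not positive}, no new tools beyond Lemmas \ref{le:quantum binomial theorem} and \ref{cor:quantum convolution} are needed. (In principle one might hope to invoke the symmetry $\sigma_1$ of Lemma \ref{automorphism group:q} to transport case (4) to case (5), but $\sigma_1$ inverts $v$, which would add an extra bar-invariance argument; I expect a straight computation to be cleaner.)
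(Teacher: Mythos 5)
Your proposal is correct and matches the paper's proof in approach: case (1) is immediate, cases (2) and (3) are handled by short expansions of $X_0^{(a_2,-a_1)}$ and $X_2^{(-a_2,a_1)}$ using the quantum binomial theorem, and the remaining cases reduce to Laurent expansions already computed in Proposition~\ref{prop:a1 a2 not positive}. The one small thing you missed is that the paper dispatches \emph{both} cases (4) and (5) by citing the computations inside the proof of Proposition~\ref{prop:a1 a2 not positive} — the expansion of $X_{-1}^{(a_1,a_2-ca_1)}$ was already invoked there via the ``similar argument'' remark — so the triple-sum computation you propose to carry out for case (5) is correct but would be a redo of work the paper treats as already done; it is the same type of manipulation as the $X_3^{(a_1-ba_2,a_2)}$ expansion, not a genuinely different route.
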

\begin{proof}
 The first case is immediate from Proposition~\ref{prop:a1 a2 not positive}.\\
 To see that $X[a_{1},a_{2}]=X_{0}^{(a_{2},-a_{1})}$ for $a_{1}\le 0<a_{2}$ we expand the right hand side using Lemma~\ref{le:quantum binomial theorem} and see that the pointed coefficients are given as in Proposition~\ref{prop:a1 a2 not positive}:
 \[X_{0}^{(a_{2},-a_{1})}=v^{-a_1a_2}X_0^{a_2}X_1^{-a_1}=v^{a_1a_2}X_1^{-a_1}(X_2^{-1}+v^{-b}X_1^bX_2^{-1})^{a_2}=\sum\limits_{p=0}^{a_2}{a_2\brack p}_{v^b}X^{(-a_1+bp,-a_2)}.\]
 By a similar calculation we see for $a_{2}\le 0<a_{1}$ that
 \[X_2^{(-a_2,a_1)}=\sum\limits_{q=0}^{a_1}{a_1\brack q}_{v^c}X^{(-a_1,-a_2+cq)}\]
 in accord with Proposition~\ref{prop:a1 a2 not positive}.  The claims for $0<ba_2\leq a_1$ and $0<ca_1\leq a_2$ were established in the course of proving Proposition~\ref{prop:a1 a2 not positive}.
\end{proof}

Let $Y=\sum\limits_{p,q\ge0} d(p,q)X^{(-a_1+bp,-a_2+cq)}$ be an element in $\cT$ or $\cT\otimes_\ZZ\QQ$ pointed at $(a_1,a_2)$ and $R$ be a region in $\mathbb{R}^2$. We say that $Y$ satisfies the {\it pointed support condition  for} $R$ if
\begin{equation}\label{eq:support} 
\textrm{  $d(p,q)=0$\; if $(p,q)\notin R$.}
\end{equation}
The following lemma gives a non-recursive characterization of quantum greedy elements.

\begin{proposition}\label{prop:equivalent}
Theorem~\ref{main theorem1} is equivalent to the following statement:
for any $(a_1,a_2)\in\ZZ$, there exists a unique element pointed at $(a_1,a_2)$ that satisfies the divisibility condition \eqref{eq:divisibility} and the pointed support condition \eqref{eq:support} for $R_{\text{greedy}}[a_1,a_2]$.
\end{proposition}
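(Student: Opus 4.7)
The plan is to show that for any element $X$ pointed at $(a_1,a_2)$, satisfying the recurrence $(\le,\ge)$ of Definition~\ref{le,ge} is equivalent to simultaneously satisfying the divisibility condition \eqref{eq:divisibility} and the pointed support condition \eqref{eq:support} for $R_{\text{greedy}}[a_1,a_2]$. With this equivalence in hand, the existence/uniqueness assertions in Theorem~\ref{main theorem1} and in Proposition~\ref{prop:equivalent} become literally the same statement. The central technical tool is Proposition~\ref{gen_recursion}, whose proof in fact establishes an equivalence: for a sequence with prescribed initial segment $e_0,\ldots,e_n$, a recurrence of the form \eqref{general_recursion} holds for all $p>n$ if and only if the generating function $\sum_{p\ge 0}e_p t^p$ is a polynomial of degree at most $n+m$ divisible by $\sum_{k=0}^m{m\brack k}_w t^k$.

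I would first dispose of cases (1)--(5) of Definition~\ref{df:PSR} by direct computation. In these cases Proposition~\ref{prop:a1 a2 not positive} already provides explicit closed-form expressions for the coefficients $e(p,q)$ solving the recurrence $(\le,\ge)$; a direct check using the quantum binomial identities of Section~6 shows that these coefficients satisfy the divisibility condition and have support inside $R_{\text{greedy}}[a_1,a_2]$, and conversely any pointed element satisfying both conditions agrees with the quantum cluster monomial identified in Corollary~\ref{cor:explicit cluster monomials} and thus satisfies the recurrence.

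The main case is (6), $0<a_1<ba_2$ and $0<a_2<ca_1$. Here I would argue row-by-row and column-by-column. Fix $q$ with $0\le cq<a_2$ and view $\{e(p,q)\}_{p\ge 0}$ as a one-variable sequence; the first clause of \eqref{eq:recurrence} matches \eqref{general_recursion} with $m_q:=a_2-cq>0$ and $w=v^b$, and applies precisely when $ba_2 p\ge ca_1 q$, i.e.\ for $p\ge n_q+1$ with a suitable $n_q$. The equivalence from Proposition~\ref{gen_recursion} then shows that satisfying this clause is equivalent---for the given initial data $e(0,q),\ldots,e(n_q,q)$---to both the vanishing $e(p,q)=0$ for $p>n_q+m_q$ and the first half of \eqref{eq:divisibility} at height $q$. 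A direct comparison of indices confirms that the vanishing bound $p>n_q+m_q$ is exactly the condition for $(p,q)$ to lie outside $R_{\text{greedy}}[a_1,a_2]$ on the horizontal line at height $q$. For $q$ with $cq\ge a_2$, the same clause of the recurrence forces $e(p,q)=0$ for all $p\ge 1$ because ${k-1\brack k}_{v^b}=0$ for $k\ge 1$, matching the fact that $R_{\text{greedy}}[a_1,a_2]$ meets such a line in at most the corner point $(0,a_1)$. The columnwise argument using the second clause of \eqref{eq:recurrence} is entirely parallel, producing the complementary halves of divisibility and support. Assembling the row and column information yields the desired equivalence in case (6).

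The main obstacle is the careful index bookkeeping required to match the cut-offs $n_q+m_q$ (and their columnwise analogues) produced by Proposition~\ref{gen_recursion} with the piecewise-linear boundary of $R_{\text{greedy}}[a_1,a_2]$ in case (6), and to verify that the rowwise and columnwise one-variable equivalences agree on the diagonal $ba_2 p=ca_1 q$ where both clauses of \eqref{eq:recurrence} apply simultaneously---this compatibility is precisely the existence portion of Theorem~\ref{main theorem1}.
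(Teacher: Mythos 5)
Your overall plan---dispose of cases (1)--(5) using Proposition~\ref{prop:a1 a2 not positive} and Corollary~\ref{cor:explicit cluster monomials}, then handle case (6) by applying Proposition~\ref{gen_recursion} row-by-row and column-by-column---is essentially the same route the paper takes. However, your row-by-row bookkeeping contains a genuine error for rows at heights $q$ with $cq\ge a_2$. You assert that the first clause of \eqref{eq:recurrence} forces $e(p,q)=0$ for all $p\ge 1$, and that $R_{\text{greedy}}[a_1,a_2]$ meets such a line in at most the corner $(0,a_1)$. Neither claim is correct: the first clause applies only when $ca_1q\le ba_2p$, i.e.\ for $p\ge\lceil ca_1q/(ba_2)\rceil$, and this threshold is at least $a_1/b$ once $q\ge a_2/c$, so the row recursion says nothing about $1\le p<ca_1q/(ba_2)$; and for $a_2/c\le q<a_1$ the region contains the whole segment $0\le p<(a_1-q)\cdot ca_1/(b(ca_1-a_2))$ below the edge $BC$, which includes lattice points with $p\ge 1$ as soon as that right endpoint exceeds $1$. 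At these heights the relevant boundary of $R_{\text{greedy}}$ is $BC$, and the vanishing of $e(p,q)$ past it is a column-wise phenomenon (supplemented by the elementary observation that for $p\ge a_1/b$ and $q\ge a_2/c$ both binomial coefficients in \eqref{eq:recurrence} vanish), not a row-wise one. So the promised ``direct comparison of indices'' matching the rowwise cut-offs to the boundary of $R_{\text{greedy}}$ breaks down for $q\ge a_2/c$, and the assembly step cannot be carried out as you describe it without repairing this.

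Your final worry about compatibility on the diagonal $ba_2p=ca_1q$ is also misplaced at the level of Proposition~\ref{prop:equivalent}. The row-wise equivalence delivers exactly the first clause of $(\le,\ge)$ on $\{ba_2p\ge ca_1q\}$, the column-wise equivalence delivers exactly the second clause on $\{ca_1q\ge ba_2p\}$, and the conjunction of these two is, by definition, the condition $(\le,\ge)$; no separate consistency check on the overlap is needed for the equivalence of the two characterizations (as opposed to for the existence assertion itself, which is a different matter and is what the rest of the paper establishes).
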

\begin{proof}
We consider the six cases of Definition \ref{df:PSR} separately. For $(a_1,a_2)$ in cases (1)-(5), the greedy element uniquely exists and is explicitly given in Corollary~\ref{cor:explicit cluster monomials}, in particular we note that it is in $\cA_v(b,c)$.  Thus by Lemma~\ref{lem:divisibility} the greedy element satisfies the divisibility condition.  We only need to show that it is the unique element pointed at $(a_1,a_2)$ that satisfies the pointed support condition for $R_{\text{greedy}}[a_1,a_2]$, but this follows directly from Proposition \ref{prop:a1 a2 not positive} and Proposition \ref{gen_recursion}.

Now take $0 < a_1 < ba_2$ and $0 < a_2 < ca_1$ and suppose $\{e(p,q)\}$ satisfies $(\le,\ge)$. 
For $q>0$ and $p>\lceil ca_{1}q/(ba_{2})\rceil-1$ the sequence $\{e(p,q)\}_{p\ge0}$ is determined by the first recurrence relation of $(\le,\ge)$.  So for $0<q< a_2/c$ we apply Proposition \ref{gen_recursion} with $m=a_{2}-cq$, $n=\lceil ca_{1}q/(ba_{2})\rceil-1$, $w=v^{b}$ to get

(i) $e(p,q)=0$ for $p>m+n$\,, and

(ii) $\sum\limits_{k=0}^{a_2-cq}{a_2-cq\brack k}_{v^b} t^k\ \Big|\ \sum\limits_{p\ge0} e(p,q)t^p$ holds for $0<q< a_2/c$.

Combining (ii) and its symmetric counterpart we see that the divisibility condition \eqref{eq:divisibility} holds. Meanwhile, the inequality $p>m+n$ is equivalent to $p\ge a_{2}-cq+ca_{1}q/(ba_{2})$.  So (i) is equivalent to the condition that $e(p,q)=0$ if $p\ge a_{2}-cq+ca_{1}q/(ba_{2})$ and $0<q< a_2/c$.  By the symmetric argument we see that $e(p,q)=0$ if $q\ge a_1-bp+ba_2p/(ca_1)$ and $0<p< a_1/b$.  Combining these observations with Corollary \ref{cor:baseline}, we see that $e(p,q)=0$ if $(p,q)\notin OABC$. This proves  \eqref{eq:support} for $R=OABC$. The above argument can be easily reversed to complete the proof of the claim.

Next, we claim that there is at most one element satisfying \eqref{eq:divisibility} and  \eqref{eq:support} for $R=OABC$.  It suffices to show that for $(p,q)\in OABC$, $e(p,q)$ is determined by those $e(i,j)$ with $(i,j)\le(p,q)$ and $(i,j)\neq(p,q)$. To verify this,  assume $ba_2p\ge ca_1q$ without loss of generality. Then (i)(ii) hold. Proposition \ref{gen_recursion} implies that $e(p,q)$ is determined by  those $e(i,q)$ with $i\le n=\lceil ca_{1}q/(ba_{2})\rceil-1$. Such integers $i$ satisfy $(i,q)\le (p,q)$ and $(i,q)\neq (p,q)$.

The above two claims immediately conclude case (6).
\end{proof}

\begin{proof}[Proof of Corollary~\ref{cor:greedy in cluster algebra}]
 Since each quantum greedy element satisfies the divisibility condition \eqref{eq:divisibility}, Lemma~\ref{lem:divisibility} shows that it is in $\cA_v(b,c)$.
\end{proof}

\section{Definitions of upper and lower quasi-greedy elements}

In this section we define and study two variations of the quantum greedy elements, $\UX[a_1,a_2]$ and $\LX[a_1,a_2]$. 

\begin{theorem}\label{recursive-definition_u}
For each $(a_{1},a_{2})\in\ZZ^{2}$, there exists a unique element in the quantum torus $\cT$ pointed at $(a_1,a_2)$ whose coefficients $\uc(p,q)$ (resp. $\lc(p,q)$) satisfy the  recurrence relation $(\le,>)$ (resp. $(<,\ge)$) in Definition \ref{le,ge}.
 \end{theorem}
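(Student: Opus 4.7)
The plan is to prove the statement for $\uc$; the argument for $\lc$ is symmetric, swapping which of the two recurrences governs the diagonal $ca_1q=ba_2p$. The proof splits naturally into three steps: construct the coefficients inductively, bound their support to place the resulting element in $\cT$, and then read off uniqueness from the inductive definition.

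For the construction, the two conditions $ca_1q\le ba_2p$ and $ca_1q>ba_2p$ partition $\ZZ_{\ge0}^2\setminus\{(0,0)\}$ disjointly, so $(\le,>)$ unambiguously assigns a defining formula to each $(p,q)\ne(0,0)$. Every right-hand side involves only $\uc(p',q')$ with $p'+q'<p+q$, so induction on $p+q$ starting from $\uc(0,0)=1$ yields well-defined coefficients $\uc(p,q)\in\ZZ[v^{\pm1}]$.

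Next I would establish finite support. In cases (1)--(5) of Definition \ref{df:PSR} the explicit coefficients from Proposition \ref{prop:a1 a2 not positive} already satisfy the full $(\le,\ge)$ relation and therefore in particular $(\le,>)$; by the uniqueness of step one they coincide with $\uc(p,q)$ and manifestly have finite support. Case (6) is the genuine work. For each fixed $q\ge0$ the first recurrence of $(\le,>)$ holds for every $p>n_q:=\lceil ca_1q/(ba_2)\rceil-1$, so Proposition \ref{gen_recursion} applied with $m=[a_2-cq]_+$, $w=v^b$, and threshold $n_q$ forces $\uc(p,q)=0$ whenever $p>n_q+[a_2-cq]_+$. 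The symmetric argument, fixing $p$ and using the second recurrence for all sufficiently large $q$, provides an analogous bound on $q$ in terms of $p$. The defining inequalities $a_1<ba_2$ and $a_2<ca_1$ of case (6) ensure that these two coupled linear constraints cut out a bounded region of the first quadrant (essentially $R_{\text{greedy}}[a_1,a_2]$), so only finitely many $\uc(p,q)$ are nonzero and the resulting pointed sum lies in $\cT$.

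Uniqueness is then immediate: any pointed element of $\cT$ with $e(0,0)=1$ obeying $(\le,>)$ must agree with $\uc$ by the induction of step one. The main obstacle is case (6), where one must verify that the support bounds for fixed $q$ and for fixed $p$ are compatible---i.e., that together they confine the support to a bounded polygon rather than a region extending to infinity along one axis. The slopes produced by the strict inequalities $a_1<ba_2$ and $a_2<ca_1$ are precisely what makes this compatibility work out.
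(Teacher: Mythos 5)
Your proof is correct and follows essentially the same strategy as the paper: inductive construction of the coefficients, finite support via Proposition~\ref{gen_recursion}, and uniqueness read off from the induction. The only organizational difference is in how the quadrant $p\ge a_1/b$, $q\ge a_2/c$ is handled: the paper disposes of it directly by observing ${k-1\brack k}_w=0$ there and then invokes Proposition~\ref{gen_recursion} only on the two strips $0\le p<a_1/b$ and $0\le q<a_2/c$, whereas you apply Proposition~\ref{gen_recursion} to every row and every column and intersect the bounds, which also works. One small correction to your closing remark, though: the reason the two families of bounds jointly confine the support is not the case-(6) inequalities $a_1<ba_2$, $a_2<ca_1$, but rather the asymmetry built into $(\le,>)$, which assigns the ray $ca_1q=ba_2p$ to the first recurrence. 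Concretely, for $p\ge a_1/b$ and $q\ge a_2/c$ the fixed-$q$ application of Proposition~\ref{gen_recursion} gives $p\le\lceil ca_1q/(ba_2)\rceil-1$, hence the strict inequality $ba_2p<ca_1q$, while the fixed-$p$ application gives $q\le\lfloor ba_2p/(ca_1)\rfloor$, hence $ca_1q\le ba_2p$; these are already contradictory, independent of the case-(6) slope conditions, which is why the same argument covers cases (4)--(6) uniformly (and, with the roles of strict and non-strict swapped, covers $(<,\ge)$).
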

\begin{proof}
We prove the claim for $\uc(p,q)$, the proof for $\lc(p,q)$ is similar.  If $a_1<0$ or $a_2<0$, then $\uc(p,q)=e(p,q)$ as given in Proposition \ref{prop:a1 a2 not positive}.  Thus for the remainder of the proof we assume $a_1,a_2>0$.

For $p\ge a_{1}/b$ and $q\ge a_{2}/c$ we use the fact that ${k-1\brack k}_{w}=0$ for all $k\ge 1$ to conclude that $\uc(p,q)=0$. If $0\le p< a_{1}/b$, then    
Proposition~\ref{gen_recursion} implies that $\uc(p,q)=0$ for $q>a_1-(b-\frac{ba_2}{ca_1})p$, that is, when $(p,q)$ is above $BC$. Similarly, if $0\le q<a_2/c$, then $\uc(p,q)=0$ when $(p,q)$ is on or to the right of $AB$. For $q=0$, we have $\uc(p,0)={a_2\brack p}_{v^b}$. Thus $\uc(p,q)=0$ outside the region $\overline{OABC}$.
In particular, all but finitely many $\uc(p,q)$ are 0. This proves the existence.
The uniqueness is obvious. 
\end{proof}


\begin{definition}\label{df:X}
 Define the {\it upper quasi-greedy element} $\UX[a_1,a_2]$ and {\it lower quasi-greedy element} $\LX[a_1,a_2]$ in $\cT$ as
$$\UX[a_1,a_2]=\sum_{(p,q)}\uc(p,q) X^{(-a_1+bp,-a_2+cq)}, \quad
\LX[a_1,a_2]=\sum_{(p,q)}\lc(p,q) X^{(-a_1+bp,-a_2+cq)}.$$
Define $\ulc(p,q)=(\uc(p,q)+\lc(p,q))/2$ and define the {\it quasi-greedy element} in $\cT\otimes_\ZZ\mathbb{Q}$ as
$$\ULX[a_1,a_2]=(\UX[a_1,a_2]+\LX[a_1,a_2])/2=\sum_{(p,q)}\ulc(p,q) X^{(-a_1+bp, -a_2+cq)}.$$
\end{definition}

\begin{corollary}
 For $(a_1,a_2)$ from cases {\rm(1)--\rm(5)} of Definition~\ref{df:PSR} we have 
 \[\UX[a_1,a_2]=\LX[a_1,a_2]=\ULX[a_1,a_2]=X[a_1,a_2].\]
\end{corollary}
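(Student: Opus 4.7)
The strategy is to verify that for $(a_1,a_2)$ in cases (1)--(5), the explicit formulas for $e(p,q)$ derived in Proposition~\ref{prop:a1 a2 not positive} satisfy not only the recurrence $(\le,\ge)$ but also the one-sided variants $(\le,>)$ and $(<,\ge)$ of Definition~\ref{le,ge}. By the uniqueness statement in Theorem~\ref{recursive-definition_u}, this will force $\UX[a_1,a_2]=\LX[a_1,a_2]=X[a_1,a_2]$, and the equality $\ULX[a_1,a_2]=X[a_1,a_2]$ is then automatic from $\ULX[a_1,a_2]=(\UX[a_1,a_2]+\LX[a_1,a_2])/2$.

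The key observation is that in each of cases (1)--(5), the first branch of the recurrence is satisfied by the explicit formula for \emph{all} $(p,q)$ with $ca_1q\le ba_2p$, and the second branch is satisfied for \emph{all} $(p,q)$ with $ca_1q\ge ba_2p$. Since these regions overlap along the diagonal $ca_1q=ba_2p$, satisfying one branch or the other (as demanded by $(\le,>)$ and $(<,\ge)$) is weaker than what the formula already provides. For cases (4) and (5) this dual verification is essentially carried out in the proof of Proposition~\ref{prop:a1 a2 not positive}: the second branch was checked for $ca_1q\ge ba_2p$ directly via Corollary~\ref{cor:quantum convolution}, and the first branch for $ca_1q\le ba_2p$ was verified by the cluster-monomial computation expanding $\sigma_1\bigl(X_3^{(a_1-ba_2,a_2)}\bigr)$. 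No new work is required beyond observing that both computations allow equality in the respective inequality.

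For cases (1)--(3) a short direct inspection suffices. In case (1) both quantum binomial coefficients ${[a_2-cq]_++k-1\brack k}_{v^b}$ and ${[a_1-bp]_++\ell-1\brack \ell}_{v^c}$ vanish whenever $(p,q)\ne(0,0)$, since $a_1,a_2\le 0$ collapses them to ${k-1\brack k}_w=0$ for $k\ge 1$; thus the only nonzero coefficient produced by either recurrence is the normalization $e(0,0)=1$. In cases (2) and (3), off the relevant axis the same vanishing phenomenon kills both branches, while on the axis the formulas $e(p,0)={a_2\brack p}_{v^b}$ and $e(0,q)={a_1\brack q}_{v^c}$ satisfy the active branch by the quantum Vandermonde identity (Corollary~\ref{cor:quantum convolution}, as already used in Corollary~\ref{cor:baseline}).

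I expect no substantive obstacle; the argument is a bookkeeping extraction of what the proof of Proposition~\ref{prop:a1 a2 not positive} already establishes. The only mildly delicate point is handling the boundary configurations in cases (2) and (3) when $a_1=0$ or $a_2=0$, where equality $ca_1q=ba_2p$ can occur at lattice points with $p=0$ or $q=0$; there one simply checks that the ``wrong'' branch still returns zero, either as an empty sum or via the vanishing of ${\ell-1\brack \ell}_w$.
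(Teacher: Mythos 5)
Your proposal is correct and takes essentially the same approach the paper intends: the paper's one-line proof ("immediate from Corollary~\ref{cor:explicit cluster monomials}") rests on exactly the observation you spell out, namely that the explicit coefficient formulas of Proposition~\ref{prop:a1 a2 not positive} satisfy both branches of the recurrence on their overlapping regions (including the diagonal $ca_1q=ba_2p$), so they simultaneously satisfy $(\le,\ge)$, $(\le,>)$, and $(<,\ge)$, whence the uniqueness in Theorem~\ref{recursive-definition_u} forces $\UX[a_1,a_2]=\LX[a_1,a_2]=X[a_1,a_2]$ and averaging gives $\ULX[a_1,a_2]=X[a_1,a_2]$. Your careful handling of the $a_1=0$ or $a_2=0$ boundary in cases (2)--(3) is a welcome extra bit of rigor that the paper glosses over.
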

\begin{proof}
 This is immediate from Corollary~\ref{cor:explicit cluster monomials}.
\end{proof}

\subsection{Characterization of various quasi-greedy elements by axioms}
Now we give a non-recursive characterization of $\UX[a_1,a_2]$, $\LX[a_1,a_2]$ and $\ULX[a_1,a_2]$  analogous to those given in Proposition~\ref{prop:equivalent}.

First we define support regions $\overline{R}_{\text{greedy}}$, $\underline{R}_{\text{greedy}}$ and $\overline{\underline{R}}_{\text{greedy}}[a_{1},a_{2}]$.  Outside case (6) in Definition \ref{df:PSR}, we set 
$$\overline{R}_{\text{greedy}}[a_1,a_2]=\underline{R}_{\text{greedy}}[a_1,a_2]=\overline{\underline{R}}_{\text{greedy}}[a_1,a_2]=R_{\text{greedy}}[a_1,a_2],$$
i.e. for all $(a_1,a_2)$ except when $0 < a_1 < ba_2$ and $0 < a_2 < ca_1$.  In the final case, we define
$$\aligned
&\overline{R}_{\text{greedy}}[a_{1},a_{2}]={R}_{\text{greedy}}[a_{1},a_{2}]\cup\bigg\{(p,q)\in\mathbb{R}_{\ge0}^2\bigg|\; 0\le p< \frac{a_1}{b},\; q+\Big(b-\frac{ba_2}{ca_1}\Big)p= a_1\bigg\};\\
&\underline{R}_{\text{greedy}}[a_{1},a_{2}]={R}_{\text{greedy}}[a_{1},a_{2}]\cup\bigg\{(p,q)\in\mathbb{R}_{\ge0}^2\bigg|\; 0\le q< \frac{a_2}{c},\; p+\Big(c-\frac{ca_1}{ba_2}\Big)q= a_2\bigg\};\\
&\overline{\underline{R}}_{\text{greedy}}[a_{1},a_{2}]=\overline{R}_{\text{greedy}}[a_{1},a_{2}]\cup\underline{R}_{\text{greedy}}[a_{1},a_{2}].
\endaligned
$$
In other words, $\overline{R}_{\text{greedy}}[a_{1},a_{2}]=\overline{OABC}$, the region that excludes the interior of $AB$ and the point $B$ but includes the rest of the boundary;  $\underline{R}_{\text{greedy}}[a_{1},a_{2}]=\underline{OABC}$, the region that excludes the interior of $BC$ and the point $B$ but includes the rest of the boundary; $\overline{\underline{R}}_{\text{greedy}}[a_1,a_2]=\overline{\underline{OABC}}$, the region that contains all the boundary except the point $B$ (see Figure \ref{fig:OABC}).

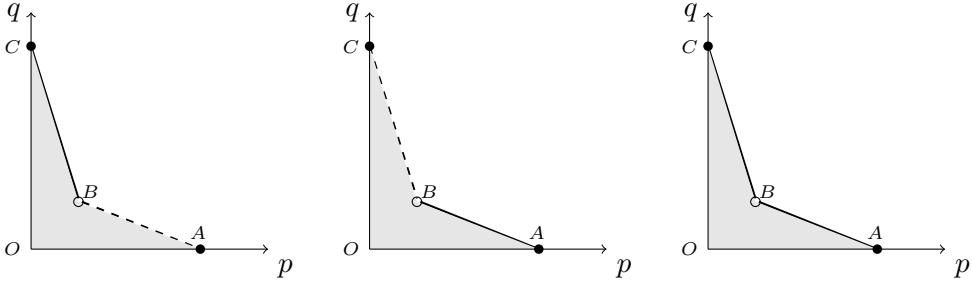
\begin{figure}[h]
\begin{tikzpicture}[scale=.9]
\begin{scope}[shift={(0,0)}]
\usetikzlibrary{patterns}
\draw[thick,dashed] (.75,.7)--(2.5,0);
\draw[thick] (0,3)--(.7,.75);
\fill [black!10]  (0,3)--(.7,.7)--(2.5,0)--(0,0)--(0,3);
\draw (0,0) node[anchor=east] {\tiny$O$};
\draw (2.2,0) node[anchor=south west] {\tiny$A$};
\draw (.6,.6) node[anchor=south west] {\tiny$B$};
\draw (0,3) node[anchor=east] {\tiny$C$};
\draw[->] (0,0) -- (3.5,0)
node[below right] {$p$};
\draw[->] (0,0) -- (0,3.5)
node[left] {$q$};
\draw (.7,.7) circle (2pt);
\fill (0,3) circle (2pt);
\fill (2.5,0) circle (2pt);
\end{scope}
\begin{scope}[shift={(5,0)}]
\usetikzlibrary{patterns}
\draw [thick] (.75,.7)--(2.5,0);
\draw[thick, dashed] (0,3)--(.7,.75);
\fill [black!10]  (0,3)--(.7,.7)--(2.5,0)--(0,0)--(0,3);
\draw (0,0) node[anchor=east] {\tiny$O$};
\draw (2.2,0) node[anchor=south west] {\tiny$A$};
\draw (.6,.6) node[anchor=south west] {\tiny$B$};
\draw (0,3) node[anchor=east] {\tiny$C$};
\draw[->] (0,0) -- (3.5,0)
node[below right] {$p$};
\draw[->] (0,0) -- (0,3.5)
node[left] {$q$};
\draw (.7,.7) circle (2pt);
\fill (0,3) circle (2pt);
\fill (2.5,0) circle (2pt);
\end{scope}
\begin{scope}[shift={(10,0)}]
\usetikzlibrary{patterns}
\draw [thick] (.75,.7)--(2.5,0);
\draw[thick] (0,3)--(.7,.75);
\fill [black!10]  (0,3)--(.7,.7)--(2.5,0)--(0,0)--(0,3);
\draw (0,0) node[anchor=east] {\tiny$O$};
\draw (2.2,0) node[anchor=south west] {\tiny$A$};
\draw (.6,.6) node[anchor=south west] {\tiny$B$};
\draw (0,3) node[anchor=east] {\tiny$C$};
\draw[->] (0,0) -- (3.5,0)
node[below right] {$p$};
\draw[->] (0,0) -- (0,3.5)
node[left] {$q$};
\draw (.7,.7) circle (2pt);
\fill (0,3) circle (2pt);
\fill (2.5,0) circle (2pt);
\end{scope}
\end{tikzpicture}
\caption{Left: $\overline{OABC}$,\quad  Center: $\underline{OABC}$,\quad Right: $\overline{\underline{OABC}}$}
\label{fig:OABC}
\end{figure}

\begin{proposition}\label{quasi-greedy_axiom}
Let $(a_1,a_2)\in\ZZ$.

{\rm(1)} $\UX[a_1,a_2]$ (resp. $\LX[a_1,a_2]$) is the unique element in the quantum torus $\cT$
 pointed at $(a_1,a_2)$ that satisfies the divisibility condition \eqref{eq:divisibility} and the support condition \eqref{eq:support} for $\overline{R}_{\text{greedy}}[a_1,a_2]$ (resp. for $\underline{R}_{\text{greedy}}[a_1,a_2]$).

{\rm(2)} $\ULX[a_1,a_2]\in \cT\otimes_\ZZ\QQ$ is pointed at $(a_1,a_2)$ and satisfies the divisibility condition \eqref{eq:divisibility} and the support condition \eqref{eq:support} for $\overline{\underline{R}}_{\text{greedy}}[a_1,a_2]$.
  
As a consequence, $\UX[a_1,a_2]$ and $\LX[a_1,a_2]$ are in $\myAA_v(b,c)$ and $\ULX[a_1,a_2]$ is in $\myAA_v(b,c)\otimes_\ZZ\QQ$.
\end{proposition}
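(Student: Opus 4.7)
When $(a_1,a_2)$ falls into cases (1)--(5) of Definition~\ref{df:PSR}, the corollary just preceding this proposition identifies $\UX$, $\LX$, and $\ULX$ all with $X[a_1,a_2]$, and the three support regions coincide with $R_{\text{greedy}}$; the claim therefore reduces to Proposition~\ref{prop:equivalent}. Thus the substantive content is case (6), where $0<a_1<ba_2$ and $0<a_2<ca_1$, so that $B=(a_1/b,a_2/c)$ is an interior point. I will argue in detail only for $\UX$; the statement for $\LX$ is obtained by interchanging rows with columns (and $v^b$ with $v^c$), and the statement for $\ULX$ then follows formally from $\ULX=(\UX+\LX)/2$: pointedness and divisibility are preserved under $\QQ$-linear averaging, and $\mathrm{supp}(\ULX)\subseteq\mathrm{supp}(\UX)\cup\mathrm{supp}(\LX)\subseteq\overline{R}_{\text{greedy}}\cup\underline{R}_{\text{greedy}}=\overline{\underline{R}}_{\text{greedy}}$.

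\textbf{Existence (axioms) for $\UX$.} Starting from the $(\le,>)$-recurrence defining $\uc(p,q)$, the idea is to apply Proposition~\ref{gen_recursion} row-by-row and column-by-column. For each fixed $q$ with $1\le q<a_2/c$, the first recurrence of $(\le,>)$ holds for $p>n_q:=\lceil ca_1q/(ba_2)\rceil-1$; Proposition~\ref{gen_recursion} with $m=a_2-cq$ and $w=v^b$ then delivers simultaneously the row-wise vanishing $\uc(p,q)=0$ for $p>n_q+m$---which, after a ceiling/floor check, means $\uc(p,q)=0$ for all integer $(p,q)$ on or to the right of segment $AB$---and the first half of the divisibility condition \eqref{eq:divisibility}. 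The baseline $q=0$ is supplied directly by Corollary~\ref{cor:baseline}. The symmetric column-wise application (second recurrence, $m=a_1-bp$, $w=v^c$, for $0\le p<a_1/b$) yields vanishing strictly above segment $BC$ together with the second half of \eqref{eq:divisibility}. Combined with the observation, already recorded in the proof of Theorem~\ref{recursive-definition_u}, that $\uc(p,q)=0$ whenever both $p\ge a_1/b$ and $q\ge a_2/c$ (from ${k-1\brack k}_w=0$), these three vanishing zones reconstruct exactly $\overline{R}_{\text{greedy}}=\overline{OABC}$.

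\textbf{Uniqueness and conclusion.} Let $Y=\sum d(p,q)X^{(-a_1+bp,-a_2+cq)}$ be any pointed element of $\cT$ satisfying divisibility and support in $\overline{R}_{\text{greedy}}$. I would prove $d(p,q)=\uc(p,q)$ by strong induction on $p+q$, exploiting the fact that the uniqueness clause of Proposition~\ref{gen_recursion} requires only conditions (a) and (b)---not the recurrence itself---of the competing sequence. At $(p,q)\ne(0,0)$ with $ba_2p\ge ca_1q$, both $\{d(p',q)\}_{p'\ge0}$ and $\{\uc(p',q)\}_{p'\ge0}$ meet the hypotheses of that clause in the row at $q$ (support provides (a); divisibility provides (b)), and their initial $n_q+1$ terms agree by induction since $n_q<p$; hence $d(p,q)=\uc(p,q)$. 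The complementary case $ba_2p<ca_1q$ is handled by the analogous column argument at fixed $p$. Finally, the consequence about membership in $\cA_v(b,c)$ (respectively in $\cA_v(b,c)\otimes_\ZZ\QQ$ for $\ULX$) is immediate from Lemma~\ref{lem:divisibility}.

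\textbf{Main obstacle.} The delicate point is the ceiling/floor bookkeeping that matches the integer bound $n_q+m=a_2-cq+\lceil ca_1q/(ba_2)\rceil-1$ produced by Proposition~\ref{gen_recursion} to the (often irrational) position of segment $AB$, and symmetrically for $BC$. When $ca_1q/(ba_2)$ is an integer the bound lands exactly on an integer lattice point of $AB$, so $\uc$ vanishes on that lattice point; when it is not, the bound is strictly to the right of $AB$. Both behaviours must be reconciled with the asymmetric definition of $\overline{R}_{\text{greedy}}$, which contains $BC\setminus\{B\}$ but excludes the open segment $AB$ and the vertex $B$; the corresponding statement for $\LX$ swaps the roles of $AB$ and $BC$. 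Getting this bookkeeping exactly right---and tracking the boundary convention that forces the effective $n_0=0$ at $q=0$---is where the proof requires the most care.
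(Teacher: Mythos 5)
Your proposal is correct and follows the same route the paper takes: the paper's proof is a one-line pointer ("similar to Proposition~\ref{prop:equivalent}") and what you wrote is an accurate fleshing-out of that reference, adapted from the $(\le,\ge)$ recurrence to $(\le,>)$.

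One detail worth recording explicitly, since it is the only place the asymmetry between $\overline{R}_{\text{greedy}}$ and ${R}_{\text{greedy}}$ actually enters: for the column-wise application of Proposition~\ref{gen_recursion} at a fixed $p$ with $0\le p<a_1/b$, the recurrence $(\le,>)$ applies only for $q>ba_2p/(ca_1)$, so the correct cutoff is $n'_p=\lfloor ba_2p/(ca_1)\rfloor$ (not $\lceil\cdot\rceil-1$). When $ba_2p/(ca_1)$ is an integer --- exactly when column $p$ meets a lattice point of $OB$, hence of $BC$ --- this gives $n'_p=ba_2p/(ca_1)$, so part (a) of Proposition~\ref{gen_recursion} forces $\uc(p,q)=0$ only for $q>a_1-bp+ba_2p/(ca_1)$, leaving the lattice point on $BC$ itself unconstrained; this is precisely why $\overline{R}_{\text{greedy}}$ contains the interior of $BC$ while the row-wise bound (where $n_q=\lceil ca_1q/(ba_2)\rceil-1$, as for the $(\le,\ge)$ case) kills the lattice points on $AB$. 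Your "ceiling/floor check" remark gestures at this but the floor versus ceiling distinction between the two directions is exactly what produces the two different regions for $\UX$ and $\LX$, so it deserves to be made precise. With that said, the argument is sound: existence of the axioms via Proposition~\ref{gen_recursion} row-by-row and column-by-column together with Corollary~\ref{cor:baseline}, uniqueness via the uniqueness clause of Proposition~\ref{gen_recursion} and induction, the $\ULX$ statement by averaging, and membership in $\cA_v(b,c)$ from Lemma~\ref{lem:divisibility}.
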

\begin{proof}
The proof of (1) is similar to Proposition~\ref{prop:equivalent}.
(2) follows from (1). The consequence follows from Lemma \ref{lem:divisibility}.
\end{proof}

Next we show that upper and lower quasi-greedy elements behave nicely under automorphisms $\sigma_1$ and $\sigma_2$ of $\myAA_v(b,c)$. 

\begin{proposition}\label{prop:mutation invariant:q}
The automorphisms $\sigma_1$ and
$\sigma_2$ act on the  upper and lower quasi-greedy elements as follows:
$$\aligned
&\sigma_1(\UX[a_1,a_2])=\LX[a_1,c[a_1]_+-a_2],\quad \sigma_1(\LX[a_1,a_2])=\UX[a_1,c[a_1]_+-a_2],\\
&\sigma_2(\UX[a_1,a_2])=\LX[b[a_2]_+-a_1,a_2],\quad \sigma_2(\LX[a_1,a_2])=\UX[b[a_2]_+-a_1,a_2].
\endaligned
$$
\end{proposition}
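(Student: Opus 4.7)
The plan is to apply the axiomatic characterization of Proposition~\ref{quasi-greedy_axiom}(1) to $\sigma_2(\UX[a_1,a_2])$: I verify that it is pointed at $(a_1',a_2)$ with $a_1':=b[a_2]_+-a_1$, satisfies the divisibility condition~\eqref{eq:divisibility}, and has pointed support in $\underline{R}_{\text{greedy}}[a_1',a_2]$. Uniqueness of the lower quasi-greedy element then forces $\sigma_2(\UX[a_1,a_2])=\LX[a_1',a_2]$. The remaining three identities reduce to this one: $\sigma_2(\LX[a_1,a_2])=\UX[b[a_2]_+-a_1,a_2]$ follows by applying the involution $\sigma_2$ to the first identity after replacing $a_1$ by $a_1'$, and the two $\sigma_1$-identities follow by the symmetric argument interchanging the roles of the two coordinates (and $b\leftrightarrow c$).

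When $a_2\le 0$, both $\UX[a_1,a_2]$ and $\LX[a_1',a_2]$ are cluster monomials by Corollary~\ref{cor:explicit cluster monomials}, and the identity reduces to an explicit computation via $\sigma_2(X_m)=X_{4-m}$ and the commutation relations in $\cT$. Henceforth assume $a_2>0$ and focus on the generic case~(6) of Definition~\ref{df:PSR} (cases~(4) and~(5) being handled similarly). By formula~\eqref{eq:sigma2:q} from the proof of Lemma~\ref{lem:divisibility}, if $\UX[a_1,a_2]=\sum\uc(p,q)X^{(-a_1+bp,-a_2+cq)}$ then $\sigma_2(\UX[a_1,a_2])=\sum d'(p',q)X^{(-a_1'+bp',-a_2+cq)}$ where $p'=a_2-p$ and
\begin{equation*}
\sum_{q\ge 0}d'(p',q)\,t^q=\Bigg(\sum_{\ell\ge 0}{a_1'-bp'\brack\ell}_{v^c}t^\ell\Bigg)\Bigg(\sum_{q\ge 0}\uc(a_2-p',q)\,t^q\Bigg).
\end{equation*}
Setting $p'=0$, the vertical isolation of the corner $A=(a_2,0)$ in $\overline{OABC}$ gives $\uc(a_2,0)=1$ and $\uc(a_2,q)=0$ for $q>0$, hence $d'(0,0)=1$: the element is pointed at $(a_1',a_2)$. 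Divisibility at this new pointed degree is then automatic from Lemma~\ref{lem:divisibility}, since $\sigma_2$ is an automorphism of $\cA_v(b,c)$ (Lemma~\ref{automorphism group:q}) and $\UX[a_1,a_2]\in\cA_v(b,c)$ is already known.

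The main obstacle is the support containment in $\underline{R}_{\text{greedy}}[a_1',a_2]=\underline{O'A'B'C'}$. I split on the sign of $a_1'-bp'$. For $0\le p'<a_1'/b$ the binomial factor is a polynomial of degree $a_1'-bp'$, and $d'(p',q)\ne 0$ requires some integer $\ell\in[0,a_1'-bp']$ with $\uc(a_2-p',q-\ell)\ne 0$; the support of $\uc$ in the strip $\bar p=a_2-p'\ge a_1/b$ of $\overline{OABC}$ permits only $q-\ell=0$ or $q-\ell<\frac{ba_2p'}{ca_1'}$, and the elementary identity $(a_1'-bp')+\frac{ba_2p'}{ca_1'}=a_1'-\bigl(b-\tfrac{ba_2}{ca_1'}\bigr)p'$ forces $d'(p',q)=0$ whenever $(p',q)$ lies outside $\underline{O'A'B'C'}$. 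For $p'>a_1'/b$ one has $a_1'-bp'<0$ and the binomial series is infinite; here the second divisibility condition of~\eqref{eq:divisibility} for $\UX[a_1,a_2]$, applied at $\bar p=a_2-p'<a_1/b$, asserts that $\sum_\ell{bp'-a_1'\brack\ell}_{v^c}t^\ell$ divides $\sum_{\bar q}\uc(a_2-p',\bar q)t^{\bar q}$. Since Lemma~\ref{cor:quantum convolution} makes this the multiplicative inverse of $\sum_\ell{a_1'-bp'\brack\ell}_{v^c}t^\ell$ in $\ZZ[v^{\pm1}][[t]]$, the displayed product collapses to a polynomial of degree exactly $\frac{ba_2(a_2-p')}{ca_1}$, which is the $q$-coordinate of the line $A'B'$ at $p'$; this forces $d'(p',q)=0$ for $q$ strictly above $A'B'$ and completes the support inclusion.
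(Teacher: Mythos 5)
Your proof follows the same overall strategy as the paper's: reduce to one of the four identities by noting $\sigma_2$ is an involution and the $\sigma_1$-identities follow by the $b\leftrightarrow c$ symmetry; dispose of cases (1)--(5) of Definition~\ref{df:PSR} via the explicit cluster-monomial formulas; and for case (6) characterize $\sigma_2(\UX[a_1,a_2])$ by the axioms of Proposition~\ref{quasi-greedy_axiom}(1), with divisibility extracted from $\sigma_2(\UX[a_1,a_2])\in\cA_v(b,c)$ via Lemma~\ref{lem:divisibility} and the support containment obtained by bounding the degree of $\sum_q d'(p',q)t^q$ through \eqref{eq:d'}. Two remarks. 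First, a small correction: in the range $p'>a'_1/b$ the product collapses to a polynomial of degree \emph{at most} $\frac{ba_2(a_2-p')}{ca_1}$, not exactly that value (the number need not be attained and need not even be an integer); fortunately an upper bound is all you use. Second, this range is where you go beyond the paper: the paper computes the degree bound only for $1\le p'\le a'_1/b$ and then states "the proof of (ii) is similar and we leave it as an exercise," whereas you correctly observe that the two cases are not merely symmetric — when $a'_1-bp'<0$ the binomial factor $\sum_\ell {a'_1-bp'\brack\ell}_{v^c}t^\ell$ is an infinite series, so one must first invoke the divisibility condition of $\UX[a_1,a_2]$ and Lemma~\ref{cor:quantum convolution} (which identifies $\sum_\ell{bp'-a'_1\brack\ell}_{v^c}t^\ell$ as the inverse series) to conclude that the product is a polynomial, and only then bound its degree. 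Your write-up is a genuine filling-in of the exercise the paper leaves to the reader.
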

\begin{proof}
Since the proof of the four identities are similar, 
we only prove the third identity 
$$\sigma_2(\UX[a_1,a_2])=\LX[b[a_2]_+-a_1,a_2].$$
In cases (1)--(5) in Definition \ref{df:PSR}, the proof is straightforward using Proposition \ref{prop:a1 a2 not positive}.
For example in case (1),
$$\aligned
\sigma_2(\UX[a_1,a_2])&=\sigma_2(X^{(-a_1,-a_2)})=v^{-a_1a_2}X_3^{-a_1}X_2^{-a_2}=v^{a_1a_2}X_2^{-a_2}X_3^{-a_1}\\
&=X_2^{(-a_2,-a_1)}=\LX[-a_1,a_2]=\LX[b[a_2]_+-a_1,a_2].\\
\endaligned
$$
So in the rest of the proof we focus on cases (6), that is, $0 < a_1 < ba_2$ and $0 < a_2 < ca_1$.

Write $a'_1=ba_2-a_1$ and define 
$$Z:=\sigma_2(\UX[a_1,a_2])=\sum_{p,q\ge0} d'(p,q) X^{(bp-a'_1,cq-a_2)}$$
It follows from Proposition~\ref{quasi-greedy_axiom} and Lemma~\ref{lem:divisibility} that $\UX[a_1,a_2]\in\cA_v(b,c)$ and thus $Z$ is an element in $\myAA_v(b,c)$ since the quantum cluster algebra is stable under all symmetries.  By the same reasoning both $\sigma_1(Z)$ and $\sigma_2(Z)$ are in $\cA_v(b,c)$ and it then follows from Lemma~\ref{lem:divisibility} that $Z$ satisfies the divisibility condition \eqref{eq:divisibility}.  To prove $\LX[a'_1,a_2]=Z$ it only remains to verify that $Z$ satisfies the support condition \eqref{eq:support} for $\underline{R}_{\text{greedy}}[a_1',a_2]$.

 The same argument as in the proof of Corollary~\ref{cor:baseline} computes $\uc(p,q)$ and $\lc(p,q)$ when $p=0$ or $q=0$, so we focus on the interesting case when $p>0$. It suffices to show that

(i) if $1\le p\le a'_1/b$, then $d'(p,q)=0$ for $q\ge a'_1-\Big(b-\displaystyle\frac{ba_2}{ca'_1}\Big)p$;

(ii) if $a'_1/b<p<a_2$, then $d'(p,q)=0$ for $q>(a_2-p)\Big/\Big(c-\displaystyle\frac{ca'_1}{ba_2}\Big)$.\\

\noindent To prove (i) we fix $p$ and compute the degree of the polynomial $\sum_{q\ge0} d'(p,q)t^q$.  Indeed, using \eqref{eq:d'} and the support condition for $\UX[a_1,a_2]$ we get
$$\aligned
\deg\Big(\sum_{q\ge0} d'(p,q)t^q\Big)&=\deg\Big(\sum_{q\ge0}\uc(a_2-p,q)t^q\Big)-\big(a_1-b(a_2-p)\big)\\
&=\deg \Big(\sum_{q\ge0}\uc(a_2-p,q)t^q\Big)+(a'_1-bp)\\
&<p\Big/\Big(c-\frac{ca_1}{ba_2}\Big)+(a'_1-bp)\\
&=\frac{ba_2}{ca'_1}p+a'_1-bp=a'_1-\Big(b-\frac{ba_2}{ca'_1}\Big)p.
\endaligned
$$
\noindent
The proof of (ii) is similar and we leave it as an exercise for the reader.
\end{proof}

Recall that positive imaginary roots are lattice points in the set 
\[\Phi^{im}_+:=\Big\{(a_1,a_2)\in\mathbb{Z}_{>0}^2: ca_1^2-bca_1a_2+ba_2^2\le 0\Big\}.\]
The real roots are lattice points in $\ZZ^2\setminus \Phi^{im}_+$.
It is well-known that the set of denominator vectors of all cluster monomials is exactly $\ZZ^2\setminus\Phi^{im}_+$.

\begin{corollary}\label{cor:cluster monomials:q}
 Let $k$ be an integer and write $P_k,P_{k+1}\in\ZZ^2$ for the denominator vectors of $x_k$ and $x_{k+1}$ respectively. Then for $(a_1,a_2)=mP_k+nP_{k+1}$ with $m,n\in\mathbb{Z}_{\ge0}$, we have
$$\UX[a_1,a_2]=\LX[a_1,a_2]=X[a_1,a_2]=X_k^{(m,n)}.$$
\end{corollary}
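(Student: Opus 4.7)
The plan is to identify each cluster monomial $X_k^{(m,n)}$ with both quasi-greedy elements $\UX[a_1,a_2]$ and $\LX[a_1,a_2]$ at $(a_1,a_2)=mP_k+nP_{k+1}$ by inducting on the distance from the initial cluster $\{X_1,X_2\}$ within the $W$-orbit of Lemma 2.1, and then to deduce the existence of $X[a_1,a_2]$ and its coincidence with $X_k^{(m,n)}$ from the equality $\UX=\LX$.

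For the base cases $k\in\{-1,0,1,2,3\}$, one computes explicitly (using $P_1=(-1,0)$, $P_2=(0,-1)$, $P_3=(1,0)$, $P_0=(0,1)$, $P_{-1}=(1,c)$, $P_4=(b,1)$) that the pointed vector $(a_1,a_2)=mP_k+nP_{k+1}$ falls into one of cases (1)--(5) of Definition 3.4: for instance $k=1$ gives $(-m,-n)\le(0,0)$ (case (1)), and $k=3$ gives $(m+bn,n)$ with $0<ba_2\le a_1$ (case (4)). Corollary 3.6 then gives $X[a_1,a_2]=X_k^{(m,n)}$, and the Corollary following Definition 4.3 gives $\UX[a_1,a_2]=\LX[a_1,a_2]=X[a_1,a_2]$.

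For the inductive step, suppose the claim holds for $\{X_k,X_{k+1}\}$ with $(a_1,a_2)=mP_k+nP_{k+1}$. I propagate it to $\{X_{3-k},X_{4-k}\}$ via $\sigma_2$ (the $\sigma_1$ case is symmetric). Using the commutation relation $X_{4-k}X_{3-k}=v^2X_{3-k}X_{4-k}$, a direct computation shows that $\sigma_2(X_k^{(m,n)})=X_{3-k}^{(n,m)}$. Combined with $X_k^{(m,n)}=\UX[a_1,a_2]$ from the inductive hypothesis and Proposition 4.5, this yields
\[X_{3-k}^{(n,m)}=\sigma_2(\UX[a_1,a_2])=\LX[b[a_2]_+-a_1,a_2].\]
Since a pointed element has a unique pointed vector and $X_{3-k}^{(n,m)}$ is pointed at $nP_{3-k}+mP_{4-k}$, we obtain $(b[a_2]_+-a_1,a_2)=nP_{3-k}+mP_{4-k}$, so $X_{3-k}^{(n,m)}=\LX[a_1',a_2']$ where $(a_1',a_2')=nP_{3-k}+mP_{4-k}$. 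The analogous computation with $\LX$ in place of $\UX$ gives $X_{3-k}^{(n,m)}=\UX[a_1',a_2']$, completing the step.

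Once $\UX[a_1,a_2]=\LX[a_1,a_2]=X_k^{(m,n)}$, the shared pointed coefficients satisfy both $(\le,>)$ and $(<,\ge)$; they therefore agree on the diagonal $ca_1q=ba_2p$ and satisfy $(\le,\ge)$, so $X[a_1,a_2]$ exists and equals $X_k^{(m,n)}$. The main technical obstacle is the $\sigma_2$-computation on cluster monomials, which requires careful bookkeeping of the non-commutative quantum torus relations; once this is in hand, the rest of the argument proceeds mechanically through the axiomatic characterization in Proposition 4.4.
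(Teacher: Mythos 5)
Your proposal is correct and follows essentially the same route as the paper's proof: both establish the $k=1$ base case directly (the paper calls it obvious; you check it via Corollary \ref{cor:explicit cluster monomials}) and then propagate to all other clusters by iteratively applying $\sigma_1$ and $\sigma_2$ through Proposition \ref{prop:mutation invariant:q}. You supply two pieces of detail the paper leaves implicit --- the explicit computation $\sigma_2(X_k^{(m,n)})=X_{3-k}^{(n,m)}$ from the quasi-commutation relations, and the observation that $\UX=\LX$ forces the shared coefficients to satisfy $(\le,\ge)$, so that $X[a_1,a_2]$ exists and agrees with the cluster monomial --- but these are expansions rather than departures, and your inclusion of $k\in\{-1,0,2,3\}$ as additional base cases is harmless redundancy since the induction already reaches them from $k=1$.
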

\begin{proof}
The statement obviously holds for $k=1$.
All other cluster monomials can be obtained from $X_1^{(m,n)}$ by iteratively applying $\sigma_1$ and $\sigma_2$. Therefore the statement follows from the $k=1$ case and Proposition \ref{prop:mutation invariant:q}.
\end{proof}

\section{$\ULX[a_1,a_2]$: basic properties}

Here we study the quasi-greedy elements $\ULX[a_1,a_2]$ introduced in Definition \ref{df:X}.

\begin{proposition}\label{mean invariant:q}
The quasi-greedy elements form a mutation invariant $\mathbb{Q}[v^{\pm1}]$-basis of the cluster algebra $\mathcal{A}_v(b,c)\otimes_\mathbb{Z}\mathbb{Q}$. The automorphisms $\sigma_1$ and
$\sigma_2$ act on quasi-greedy elements as follows:
$$\sigma_1(\ULX[a_1,a_2])=\ULX[a_1,c[a_1]_+-a_2],\quad \sigma_2(\ULX[a_1,a_2])=\ULX[b[a_2]_+-a_1,   a_2].$$
\end{proposition}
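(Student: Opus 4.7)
The two symmetry identities follow at once from Proposition \ref{prop:mutation invariant:q} by averaging. Since $\sigma_1$ is $\mathbb{Z}$-linear and $\ULX[a_1,a_2]=\tfrac{1}{2}(\UX[a_1,a_2]+\LX[a_1,a_2])$, one computes
\begin{align*}
\sigma_1(\ULX[a_1,a_2])&=\tfrac{1}{2}\bigl(\sigma_1(\UX[a_1,a_2])+\sigma_1(\LX[a_1,a_2])\bigr)\\
&=\tfrac{1}{2}\bigl(\LX[a_1,c[a_1]_+-a_2]+\UX[a_1,c[a_1]_+-a_2]\bigr)\\
&=\ULX[a_1,c[a_1]_+-a_2],
\end{align*}
where the middle equality applies Proposition \ref{prop:mutation invariant:q} (which just swaps $\UX\leftrightarrow\LX$ at the shifted index). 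The $\sigma_2$-identity is verified identically.

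For the basis claim I would argue by triangularity. Fix a monomial order $\prec$ on $\mathbb{Z}^2$ that refines the componentwise partial order; for concreteness, declare $(n_1,n_2)\prec(m_1,m_2)$ iff $n_1+n_2<m_1+m_2$, breaking ties lexicographically. Each $\ULX[a_1,a_2]$ lies in $\myAA_v(b,c)\otimes_\ZZ\mathbb{Q}$ by Proposition \ref{quasi-greedy_axiom}, has finite support contained in $\overline{\underline{R}}_{\text{greedy}}[a_1,a_2]$, and is pointed at $(a_1,a_2)$ with $\ulc(0,0)=1$; hence its unique $\prec$-smallest monomial is $X^{(-a_1,-a_2)}$, with coefficient $1$ (every other term $X^{(-a_1+bp,-a_2+cq)}$ has strictly larger coordinate sum). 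Since $(a_1,a_2)\mapsto(-a_1,-a_2)$ is a bijection of $\mathbb{Z}^2$, the $\ULX$-elements have pairwise distinct $\prec$-leading monomials. Linear independence is then immediate: in any finite nontrivial $\mathbb{Q}[v^{\pm1}]$-dependence, the summand whose leading monomial is $\prec$-smallest contributes an uncancellable nonzero coefficient. For spanning, take $Z\in\myAA_v(b,c)\otimes\mathbb{Q}$, write it as a finite-support Laurent polynomial in $\cT$ via Theorem \ref{thm1}, and iteratively subtract $c\cdot\ULX[-n_1^*,-n_2^*]$, where $(n_1^*,n_2^*)$ is the $\prec$-smallest point of the current support and $c$ its coefficient; each step strictly raises the $\prec$-minimum of the support.

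Mutation invariance is then a corollary of the first part: the $\ULX$-family is stable under the group $W$ generated by $\sigma_1$ and $\sigma_2$, and since products of these reflections relate the various clusters $\{X_m,X_{m+1}\}$ (as noted in \S\ref{sec:symmetries}), the quasi-greedy basis does not depend on the choice of initial cluster.

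The subtlest point is the termination of the spanning reduction. Although the $\prec$-minimum of the support strictly increases at each step, subtracting $\ULX[-n_1^*,-n_2^*]$ may a priori introduce monomials lying outside the initial Newton polytope of $Z$, so ``finite support + strictly increasing minimum'' is not by itself sufficient. I expect termination to follow by combining the explicit bounded shape of $\overline{\underline{R}}_{\text{greedy}}[a_1,a_2]$ with the multi-cluster constraint $\myAA_v(b,c)=\bigcap_{m=0}^{2}\cT_m$ from Theorem \ref{thm1}, which forces the support of every intermediate element to remain within a uniformly bounded region; alternatively, one can descend to the classical greedy basis via the specialization $v=1$ and invoke Theorem \ref{main theorem-commutative}(a) to control the process.
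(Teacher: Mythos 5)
Your treatment of the two symmetry identities by averaging Proposition~\ref{prop:mutation invariant:q} is exactly what the paper does; your mutation-invariance remark at the end is also the intended reading. The basis claim, however, is handled differently, and your version has a gap that you yourself flag but do not close.

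The paper (citing \S6 of \cite{llz}) proves the basis claim by comparing the quasi-greedy family against the \emph{standard monomial basis} of $\cA_v(b,c)\otimes_\ZZ\QQ$, a family already known to be a $\QQ[v^{\pm1}]$-basis, and showing the transition matrix is unitriangular, hence invertible. Your replacement of the reference basis by the Laurent monomial basis of the ambient torus changes the nature of the problem: the Laurent monomials are a basis of $\cT\otimes\QQ$, not of the subalgebra $\cA_v(b,c)\otimes\QQ$, so ``triangularity'' no longer delivers spanning for free. Your linear-independence argument is fine (each $\ULX[a_1,a_2]$ does have $X^{(-a_1,-a_2)}$ as its unique $\prec$-minimal monomial, and no two pointed monomials coincide). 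But the spanning reduction is where the approach breaks down, and the two fixes you sketch are not sufficient as stated. The assertion that the multi-cluster constraint $\cA_v(b,c)=\bigcap_{m=0}^{2}\cT_m$ ``forces the support of every intermediate element to remain within a uniformly bounded region'' is precisely the content one would need to prove; subtracting $\ULX[-n_1^*,-n_2^*]$ with both $-n_1^*,-n_2^*>0$ genuinely introduces monomials of larger coordinate sum (up to and including $X^{(0,0)}$ and beyond), so the Newton polytope of the partial remainders is not a priori contained in that of $Z$. (One is also not allowed to appeal to Lemma~\ref{lemma:support:q}(iii) here, since that lemma presupposes an expansion in quasi-greedy elements — exactly what spanning is supposed to furnish — so its use would be circular.) The alternative of specializing $v=1$ and invoking Theorem~\ref{main theorem-commutative}(a) is likewise insufficient: specialization can kill terms, so control at $v=1$ does not automatically bound the process over $\QQ[v^{\pm1}]$. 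In short: correct on symmetry, correct on linear independence, but the spanning step needs the comparison with the standard monomial basis (or some other genuine boundedness input) rather than the raw torus monomials.
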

\begin{proof}
The second statement follows immediately from Proposition \ref{prop:mutation invariant:q}.
The proof of the first statement follows an argument similar to that in \S6 of \cite{llz} by comparing the family of quasi-greedy elements with the
standard monomial basis and showing that the transition matrix is invertible, so we omit the proof.
\end{proof}

Now we introduce the notion of special region which will be used in Lemma \ref{lemma:support:q}.

\begin{definition}\label{df:special region}
We call $R\subseteq\mathbb{R}^2$ a {\it special region} if it is the closed region bounded by a polygon $R_0R_1\cdots R_n$, where  $R_i=(u_i,v_i)$ for $i=1,\dots, n$ with
$$u_1\le u_2\le \cdots \le u_n,\quad v_1\ge v_2\ge \cdots\ge v_n,$$
$R_0=(u_1,v_n)$,
and if $R$ is \emph{origin convex}, i.e. $R$ satisfies the property that for any point $p\in R$, the whole line segment connecting the origin $(0,0)$ and $p$ is in $R$.
\end{definition}

\begin{remark}
 It follows from the definition of a special region that $u_1\le 0\le u_n$ and $v_1\ge 0\ge v_n$.  Thus any special region can be pictured as in Figure \ref{fig:special region}.
\end{remark}

\begin{figure}[h]
\begin{tikzpicture}[scale=.8]
\begin{scope}[shift={(11,0)}]
\usetikzlibrary{patterns}
\draw[black] (0,5)--(1,3.5)--(2,3.2)--(5,.5)--(6.5,0)--(0,0)--(0,5);
\fill [black!10]  (0,5)--(1,3.5)--(2,3.2)--(5,.5)--(6.5,0)--(0,0)--(0,5);
\fill (0,0) circle (1pt);
\fill (0,5) circle (1pt);
\fill (1,3.5) circle (1pt);
\fill (2,3.2) circle (1pt);
\fill (5,.5) circle (1pt);
\fill (6.5,0) circle (1pt);
\fill (1.5,1) circle (1pt);
\draw[black,dashed] (1.5,1)--(0,0) (1.5,1)--(0,5) (1.5,1)--(1,3.5) (1.5,1)--(2,3.2) (1.5,1)--(5,.5) (1.5,1)--(6.5,0);
\draw (0,0) node[anchor=east] {\tiny$R_0$};
\draw (0,5) node[anchor=east] {\tiny$R_1$};
\draw (0.9,3.7) node[anchor=west] {\tiny$R_2$};
\draw (2,3.3) node[anchor=west] {\tiny$R_3$};
\draw (3.1,2.3) node[anchor=west] {\tiny$\ddots$};
\draw (3.5,1.93) node[anchor=west] {\tiny$\ddots$};
\draw (4.8,.8) node[anchor=west] {\tiny$R_{n-1}$};
\draw (6.5,0) node[anchor=west] {\tiny$R_n$};
\draw (1.5,1) node[anchor=east] {\tiny$(0,0)$};
\end{scope}
\end{tikzpicture}
\caption{A special region $R$}
\label{fig:special region}
\end{figure}
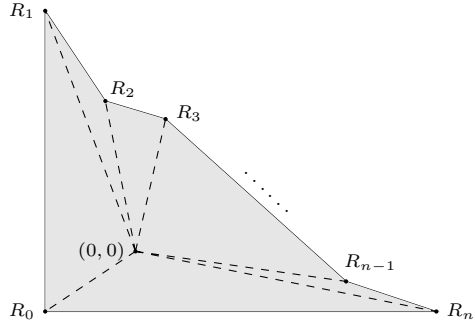

Define $\overline{\underline{S}}[a_{1},a_{2}]=\varphi(\overline{\underline{R}}_{\textrm{greedy}}[a_{1},a_{2}])$. Note that it is a closed region in cases (1)--(5) of Definition \ref{df:PSR}, while in case (6) it is obtained by removing the origin from the boundary of a closed region.
 
\begin{lemma}\label{lemma:support:q}\mbox{}
\begin{enumerate}[\upshape (i)]
 \item If $\overline{\underline{S}}[a_1,a_2]$ contains a point $(u,v)\in\RR_{\ge0}^2$, then 
$$a_1=-u,\quad a_2=-v, \quad  
\overline{\underline{S}}[a_1,a_2]=\{(u,v)\}.$$
 \item If $(a_1,a_2)\neq(a_1',a_2')$ and $\overline{\underline{S}}[a_1,a_2]\subseteq\overline{\underline{S}}[a_1',a_2']$, then 
$$0 < a_1 < ba_2, \; 0 < a_2 < ca_1, \;0 < a_1' < ba_2', \; 0 < a_2' < ca_1', \; \textrm{ and }\; a_1:a_2=a_1':a_2'.$$
 \item Let $Z$ be an element in the quantum cluster algebra $\mathcal{A}_v(b,c)$ with the linear expansion
$$Z=\sum_{a_1,a_2}d_{a_1,a_2}\ULX[a_1,a_2], \quad \textrm{ where } d_{a_1,a_2}\in\mathbb{Q}[v^{\pm1}].$$
 If the support of $Z$  is contained in a special region $R$,
then the support region $\overline{\underline{S}}[a_{1},a_{2}]$ of any quasi-greedy element $\ULX[a_1,a_2]$ corresponding to a nonzero $d_{a_1,a_2}$ is contained in $R$.
\end{enumerate}
\end{lemma}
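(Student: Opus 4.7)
The plan is to treat the three parts sequentially, guided by the explicit geometry of $\overline{\underline{R}}_{\mathrm{greedy}}[a_1,a_2]$ from Definition~\ref{df:PSR} together with the transition map $\varphi(p,q)=(-a_1+bp,-a_2+cq)$. For (i), observe that a point $(u,v)\in\overline{\underline{S}}[a_1,a_2]\cap\RR_{\ge0}^2$ pulls back to some $(p,q)\in\overline{\underline{R}}_{\mathrm{greedy}}[a_1,a_2]$ with $bp\ge a_1$ and $cq\ge a_2$. Inspecting the six cases of Definition~\ref{df:PSR}: in cases (2)--(5) a direct vertex check rules out such $(p,q)$; in case (6) the only candidate is $B=(a_1/b,a_2/c)$, which is explicitly excluded from $\overline{\underline{R}}_{\mathrm{greedy}}[a_1,a_2]$; only case (1) admits such a point, and there $\overline{\underline{S}}[a_1,a_2]=\{(-a_1,-a_2)\}=\{(u,v)\}$, giving $a_1=-u$ and $a_2=-v$.

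For (ii), the coordinate-minimal vertex $\varphi(O)=(-a_1,-a_2)$ of $\overline{\underline{S}}[a_1,a_2]$ must lie in $\overline{\underline{S}}[a_1',a_2']$, forcing $a_1\le a_1'$ and $a_2\le a_2'$; applying (i) to $\overline{\underline{S}}[a_1',a_2']$ rules out $(-a_1,-a_2)\in\RR_{\ge0}^2$, since otherwise it would force $(a_1,a_2)=(a_1',a_2')$. Substituting the remaining vertices $\varphi(A)=(ba_2-a_1,-a_2)$ and $\varphi(C)=(-a_1,ca_1-a_2)$ of $\overline{\underline{S}}[a_1,a_2]$ into the inequalities defining $\overline{\underline{S}}[a_1',a_2']$, a case analysis over the six possible shapes of the latter shows that the containment can hold only when both regions are of case (6) type. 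The guiding observation is that a case-(6) support region contains points arbitrarily close to the origin, whereas the regions in cases (1)--(5) are bounded away from the origin (outside the trivial case $(a_1,a_2)=(0,0)$), so the containment forces $(a_1',a_2')$ into case (6) as well. Once both are case (6), the two vertex inequalities from $\varphi(A)$ and $\varphi(C)$ reduce to $a_1a_2'\ge a_1'a_2$ and $a_1a_2'\le a_1'a_2$, whence $a_1:a_2=a_1':a_2'$.

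For (iii) I argue by induction on the number of nonzero coefficients $d_{a_1,a_2}$, the base case being trivial. Choose $(a_1^*,a_2^*)$ with $d_{a_1^*,a_2^*}\neq 0$ and $a_1^*+a_2^*$ maximal. Since $X^{(-a_1^*,-a_2^*)}$ appears in $\ULX[a_1',a_2']$ with nonzero coefficient only when $a_1'\ge a_1^*$ and $a_2'\ge a_2^*$, the maximality of $a_1^*+a_2^*$ forces $(a_1',a_2')=(a_1^*,a_2^*)$, and so
\[
\bigl[X^{(-a_1^*,-a_2^*)}\bigr] Z = d_{a_1^*,a_2^*}\cdot\ulc(0,0) = d_{a_1^*,a_2^*}\neq 0,
\]
whence $(-a_1^*,-a_2^*)\in\operatorname{supp}(Z)\subseteq R$. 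To upgrade this vertex membership to $\overline{\underline{S}}[a_1^*,a_2^*]\subseteq R$, I apply the analogous extremality argument at the remaining vertices $\varphi(A),\varphi(C)$ of $\overline{\underline{S}}[a_1^*,a_2^*]$: by (ii), the additional $(a_1',a_2')$ that can contribute to these monomials must lie on a case-(6) chain sharing the ratio $a_1^*:a_2^*$, and within such a chain the maximality of $a_1^*+a_2^*$ isolates the contribution, placing $\varphi(A)$ and $\varphi(C)$ in $R$ as well. The origin-convexity of $R$, combined with the star-shape of the convex quadrilateral $\overline{\underline{S}}[a_1^*,a_2^*]$ from the origin (its boundary away from the origin consists of the left and bottom edges, whose endpoints are now known to lie in $R$), then forces the full support region into $R$. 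Subtracting $d_{a_1^*,a_2^*}\ULX[a_1^*,a_2^*]$ from $Z$ reduces the number of nonzero coefficients by one while preserving the support hypothesis, and the induction concludes.

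The main obstacle is (iii): although the extremality argument cleanly yields $(-a_1^*,-a_2^*)\in R$, propagating to the other vertices $\varphi(A)$ and $\varphi(C)$ of $\overline{\underline{S}}[a_1^*,a_2^*]$ involves possible cancellations among quasi-greedy elements along a case-(6) chain of the same ratio, and part (ii) is precisely the tool that rigidifies this chain enough to rule out such cancellations and complete the descent.
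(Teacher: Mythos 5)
Parts (i) and (ii) are handled by the same sort of case-by-case inspection that the paper itself omits; your treatment is at least as complete as the paper's, and I see no problems there.

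Part (iii) contains a genuine gap. Your extremality argument at $\varphi(O)$ is correct: any $(a_1',a_2')$ whose support region contains $\varphi(O)=(-a_1^*,-a_2^*)$ must satisfy $a_1'\ge a_1^*$ and $a_2'\ge a_2^*$, and maximality of $a_1^*+a_2^*$ then isolates $(a_1^*,a_2^*)$. But this does \emph{not} propagate to $\varphi(A)$ and $\varphi(C)$. For example, $\varphi(C)=(-a_1^*,ca_1^*-a_2^*)$ lies in $\overline{\underline{S}}[a_1',a_2']$ only forces $a_1'\ge a_1^*$ and $a_2'\ge a_2^*-ca_1^*$, not $a_2'\ge a_2^*$; so there may well be contributing pairs with $a_1'+a_2'<a_1^*+a_2^*$, and the coefficient of $X^{\varphi(C)}$ in $Z$ can cancel to zero, which destroys the conclusion $\varphi(C)\in R$. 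Your appeal to (ii) to "rigidify the chain" does not help: (ii) has as its hypothesis the \emph{containment} $\overline{\underline{S}}[a_1,a_2]\subseteq\overline{\underline{S}}[a_1',a_2']$, whereas here the two support regions need only share a single boundary point; that is a far weaker condition and (ii) says nothing about it. (As a minor point, for an imaginary root $(a_1^*,a_2^*)$ the region $\overline{\underline{S}}[a_1^*,a_2^*]$ is \emph{not} convex — it has a reflex corner at $\varphi(B)=(0,0)$ — so the final "convex quadrilateral" step also needs repair, although the underlying idea of star-shapedness plus origin-convexity can be salvaged once the edges are known to lie in $R$.)

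The paper's proof is structured quite differently and sidesteps exactly this difficulty. It argues by contradiction: assuming some $(u,v)\in\overline{\underline{S}}[a_1,a_2]\setminus R$ with $d_{a_1,a_2}\ne0$, it first passes (WLOG) to a pair that is extremal in the $(-a_1,-a_2)$-partial order, yielding $(-a_1,-a_2)\in R$ and hence $(-a_1,-a_2)\ge R_0$. From the location of the witness $(u,v)$ and origin-convexity of $R$ it then deduces, via a case analysis on the six shapes, that the corner $\varphi(C)=(-a_1,-a_2+ca_1)$ (or symmetrically $\varphi(A)$) is \emph{not} in $R$. Finally, among all pairs with $d\ne0$ and $\varphi(C)\notin R$, it selects one maximizing $-a_2+ca_1$; by this new extremality the monomial $X^{\varphi(C)}$ cannot cancel in $Z$, so $\varphi(C)\in\operatorname{supp}(Z)\subseteq R$, the desired contradiction. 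In other words, the paper never needs to show all vertices of a single quasi-greedy element's support lie in $R$; it derives two incompatible conclusions about one corner by two different extremality choices. If you want to keep your inductive framework, you would need to replace the invocation of (ii) by a genuine argument handling the possible cancellations at $\varphi(A)$ and $\varphi(C)$ — and that argument is essentially the content of the paper's contradiction.
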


\begin{proof}
(i) and (ii) follow from an easy case-by-case study of the six cases in Definition \ref{df:PSR} and we skip the proof. (It is helpful to look at Figure \ref{fig:PSR} and observe that $\varphi(B)=(0,0)$.)

(iii) Aiming at a contradiction we assume that there are integers $a_1,a_2$ with $d_{a_1,a_2}\neq0$ such that there is a point $(u,v)\in\overline{\underline{S}}[a_1,a_2]$ with $(u,v)\notin R$. Without loss of generality, we may assume that there is no other $d_{a_1',a_2'}\neq0$ with $-a'_1\le -a_1$ and $-a'_2\le -a_2$.  Then, since each is pointed, the monomial $X^{(-a_1,-a_2)}$ of $\ULX[a_1,a_2]$ cannot appear in any other $\ULX[a'_1,a'_2]$ and the point $(-a_1,-a_2)$ is in the support of $Z$, thus the point $(-a_1,-a_2)$ is in $R$.  It follows that
\begin{equation}\label{R_0}
(-a_1,-a_2)\ge (u_1,v_n)=R_0.
\end{equation} 

Since $(-a_1,-a_2)\le(u,v)$, we see that for $u<u_1$ or $v<v_n$ we have $(-a_1,-a_2)\notin R$, a contradiction.  In particular, $u,v\le0$ implies $u<u_1$ or $v<v_n$, which we have seen is impossible.  If $u,v\ge0$, then (i) asserts that $(-a_1,-a_2)=(u,v)\notin R$, again a contradiction.  Thus we must have $u_1<u<0$ and $v>0$ or $u>0$ and $v_1<v<0$.  In the rest we assume without loss of generality that $u_1<u<0$ and $v>0$, this is only possible in cases (3)(4)(6) of Definition \ref{df:PSR}.

  We assert that $(-a_1,-a_2+ca_1)\notin R$. To show this, we consider each case separately:
\smallskip

\noindent Case (3): $a_2 \leq 0 < a_1$:  The support region $\overline{\underline{S}}[a_1,a_2]$ is the vertical segment $\varphi(OC)$ connecting $\varphi(O)=(-a_1,-a_2)$ and $\varphi({C})=(-a_1,-a_2+ca_1)$. Since both $\varphi(O)$ and $\varphi({C})$ are weakly to the northeast of $R_0$, from the shape of $R$ we may conclude that $\varphi({C})\notin R$. Indeed, if $\varphi({C})$ is in $R$, then the whole segment $\varphi(OC)$ is in $R$, in particular $(u,v)\in\varphi(OC)$ is in $R$, a contradiction.

\smallskip

\noindent Case (4): $0<ba_2\le a_1$. The support region $\overline{\underline{S}}[a_1,a_2]$ is the trapezoid $\varphi(OAD_1C)$ which lies strictly to the west of the vertical line through $(0,0)$.  The points $\varphi(O)$ and $\varphi(A)$ are in $R$, to the southwest of $(0,0)$ while the line through $\varphi(C)$ and $\varphi(D_1)$ passes below $(0,0)$, intersecting the vertical line through $(0,0)$ at $(0,-a_2)$ (see Figure \ref{fig:PSR}).  By the shape of the special region $R$ we see that if $\varphi({C})$ is in $R$, then the whole segment $\varphi(OC)$ is in $R$.  But, since $v>0$, the line through $(0,0)$ and $(u,v)$ intersects $\varphi(OC)$ at a point $P\in R$. But then $P\in R$ and $(u,v)\notin R$, contradicting the origin convexity of $R$.
\smallskip

\noindent Case (6): $0 < a_1 < ba_2$, and $0 < a_2 < ca_1$. The proof of this case is similar to Case (4) using that $\varphi(B)=(0,0)$.
\smallskip

Now consider all pairs $(a_1,a_2)$ with $d_{a_1,a_2}\neq0$ and $(-a_1,-a_2+ca_1)\notin R$ (by assumption and the above considerations there exists at least one such a pair), and take one with maximal $-a_2+ca_1$. Then the monomial $X^{(-a_1,-a_2+ca_1)}$ of $\ULX[a_1,a_2]$ cannot appear in any other $\ULX[a'_1,a'_2]$ and the point $(-a_1,-a_2+ca_1)$ is in the support of $Z$, thus the point $(-a_1,-a_2+ca_1)$ is in $R$.  This contradiction completes the proof of (iii).
\end{proof}

The next lemma studies certain coefficients in the quasi-greedy elements. Recall the region $OABC$ in case (6): $0<a_1<ba_2$ and $0<a_2<ca_1$ of Figure \ref{fig:PSR}, which we reproduce here  in Figure \ref{fig:p'q'} (for convenience we only draw one of the two figures). A lattice point $(p,q)$ is on the interior of the segment $OB$ if $0<p<a_1/b$ and $p:q=ca_{1}:ba_{2}$. It is easy to check for such a point $(p,q)$ that $(p,q+a+1-bp)$ is on the edge $BC$ while $(p+a_2-cq,q)$ is on the edge $AB$.
\begin{figure}[h]
\begin{tikzpicture}[scale=1.2]
\begin{scope}[shift={(2,0)}]
\usetikzlibrary{patterns}
\draw[thick] (0,3)--(0,0)--(2.5,0);
\draw[thick] (0,3)--(.7,.7)--(2.5,0);
\fill [black!10]  (0,3)--(.7,.7)--(2.5,0)--(0,0)--(0,3);
\draw (0,0) node[anchor=east] {\tiny$O$};
\draw (2.4,0) node[anchor=west] {\tiny$A$};
\draw (.6,.6) node[anchor=south west] {\tiny$B$};
\draw (0,3) node[anchor=east] {\tiny$C$};
\draw[thick, black!50] (0,0)--(.7,.7);
\draw[thick, black!50] (1.7,.3)--(.3,.3)--(.3,2);
\fill (.3,.3) circle (1pt);
\fill (.3,2) circle (1pt);
\fill (1.75,.3) circle (1pt);
\draw (.28,.15) node[anchor=west] {\tiny$(p,q)$};
\draw (.3,2) node[anchor=west] {\tiny$(p,q')=(p,q+a_1-bp)$};
\draw (1.8,.4) node[anchor=west] {\tiny$(p',q)=(p+a_2-cq,q)$};
\draw (-2,.4) node[anchor=west] {};
\end{scope}
\end{tikzpicture}
\caption{}
\label{fig:p'q'}
\end{figure}

\begin{lemma}\label{lem:pq':q} Assume $0<a_1<ba_2$ and $0<a_2<ca_1$. Consider a lattice point $(p,q)$ on the interior of the segment $OB$. Denote $q':=q+a_1-bp$ and $p':=p+a_2-cq$. 

{\rm(i)} The following equalities hold:
$$\ulc(p,q')=\uc(p,q')/2,\quad \ulc(p',q)=\lc(p',q)/2.$$

{\rm(ii)} Assume $\uc(i,j)=\lc(i,j)$ for every lattice point $(i,j)$ on the interior of the segment $OB$ with $i<p$.  Then $\uc(i,j)=\lc(i,j)$ for every lattice point $(i,j)\neq(p,q)$ satisfying $0\le i\le p$ and $0\le j\le q$. Moreover, 
 $$\ulc(p,q')=\frac{\uc(p,q)-\lc(p,q)}{2},\quad \ulc(p',q)=\frac{\lc(p,q)-\uc(p,q)}{2}.$$
As a consequence, if $(p,q)$ is the highest interior point in $BC$ with $\ulc(p,q)\neq0$, then 
$$\ulc(p,q)=\frac{\uc(p,q-a_{1}+bp)-\lc(p,q-a_{1}+bp)}{2}.$$
\end{lemma}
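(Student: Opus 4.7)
Both identities in (i) follow at once from the support conditions in Proposition~\ref{quasi-greedy_axiom}: the point $(p,q')$ lies on the interior of edge $BC$, which is contained in $\overline{R}_{\text{greedy}}[a_1,a_2]$ but excluded from $\underline{R}_{\text{greedy}}[a_1,a_2]$, forcing $\lc(p,q')=0$ and hence $\ulc(p,q')=\uc(p,q')/2$. The second identity is symmetric, using that $(p',q)$ lies on the interior of edge $AB$.

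For the first claim of (ii), my plan is to prove $\uc(i,j)=\lc(i,j)$ for every lattice point $(i,j)\ne(p,q)$ with $0\le i\le p$ and $0\le j\le q$ by induction on $i+j$, the base case $(0,0)$ being immediate. In the inductive step I would split on the position of $(i,j)$ relative to the line $OB$ defined by $ca_1j=ba_2i$. If $(i,j)$ lies strictly below this line, both $\uc(i,j)$ and $\lc(i,j)$ are governed by the first (``$\le$''/``$<$'') branch of \eqref{eq:recurrence} and are expressed in terms of values at $(i-k,j)$ with $k\ge 1$; since $i-k<i\le p$ forces $(i-k,j)\ne(p,q)$ and $(i-k)+j<i+j$, the induction hypothesis applies to yield equality. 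The strictly-above case is symmetric. Finally, if $(i,j)$ lies on $OB$ with $(i,j)\ne(p,q)$ but $i\le p$, $j\le q$, then proportionality of lattice points on $OB$ forces $i<p$, and the standing hypothesis of (ii) supplies the equality.

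For the second claim of (ii), I plan to work on the single column $i=p$. Set $U(t)=\sum_j\uc(p,j)t^j$, $L(t)=\sum_j\lc(p,j)t^j$, and $D(t)=\sum_{\ell\ge 0}{a_1-bp\brack\ell}_{v^c}t^\ell$, a monic polynomial of degree $a_1-bp$. Since $\UX[a_1,a_2]$ and $\LX[a_1,a_2]$ lie in $\cA_v(b,c)$ by Proposition~\ref{quasi-greedy_axiom}, Lemma~\ref{lem:divisibility} forces $D(t)$ to divide each of $U(t)$ and $L(t)$. The support regions restricted to column $p$ give $\deg U\le q'$ (since $(p,q')\in\overline{R}_{\text{greedy}}$) and $\deg L\le q'-1$ (since $(p,q')\notin\underline{R}_{\text{greedy}}$). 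Combined with the first claim, $U(t)-L(t)=(\uc(p,q)-\lc(p,q))t^q+(\text{terms of degree}>q)$. Since $D(0)=1$, the quotient $Q(t):=(U(t)-L(t))/D(t)$ is a polynomial of degree at most $q'-(a_1-bp)=q$ whose lowest-order term is $(\uc(p,q)-\lc(p,q))t^q$; these two constraints force $Q(t)=(\uc(p,q)-\lc(p,q))t^q$. Reading off the coefficient of $t^{q'}$ in $U-L=(\uc(p,q)-\lc(p,q))t^q D(t)$, and using $\lc(p,q')=0$ together with ${a_1-bp\brack a_1-bp}_{v^c}=1$, I obtain $\uc(p,q')=\uc(p,q)-\lc(p,q)$; part (i) then gives $\ulc(p,q')=(\uc(p,q)-\lc(p,q))/2$. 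The identity $\ulc(p',q)=(\lc(p,q)-\uc(p,q))/2$ follows from the symmetric argument on the row $j=q$.

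The final consequence I would deduce by an outer induction on the $p$-coordinates of interior $OB$-lattice points. A ``highest'' interior $BC$-point $(p,q)$ with $\ulc(p,q)\ne 0$ has the smallest $p$-coordinate among such points; hence any interior $OB$-point $(i,j^{OB})$ with $i<p$ pairs with an interior $BC$-point strictly higher than $(p,q)$, where $\ulc$ vanishes by maximality. Inducting on $i$ and applying (ii) at each step converts this vanishing into $\uc(i,j^{OB})=\lc(i,j^{OB})$. Once (ii)'s hypothesis is verified for every $i<p$, a last application at $i=p$ (with $q$ replaced by $q-a_1+bp$) yields the claimed formula. The main obstacle I expect is the polynomial-division step in the second claim: one must precisely track the asymmetric top-degree bounds on $U$ and $L$ coming from whether or not $(p,q')$ lies in the respective support region, and combine these with the first claim and the divisibility to collapse $(U-L)/D$ to a single monomial.
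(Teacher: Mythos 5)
Your proof is correct and follows essentially the same route as the paper: part (i) is read off directly from the asymmetry of the two support regions at the edges $BC$ and $AB$; the first claim of (ii) comes from the observation that the $\uc$ and $\lc$ recurrences only diverge on the line $OB$, where the standing hypothesis intervenes; and the second claim is obtained by localising to the column $i=p$, using the divisibility condition from Lemma~\ref{lem:divisibility} to force the difference $\sum_j(\uc-\lc)(p,j)t^j$ (supported only in degrees $q$ through $q'$) to be a scalar multiple of the degree-$(a_1-bp)$ divisor, whose equal lowest and highest coefficients then give $(\uc-\lc)(p,q)=\uc(p,q')$. Your treatment of the final ``consequence'' via an outer induction on the $p$-coordinate of interior $OB$-lattice points is a careful unwinding of what the paper leaves implicit, and your use of the correct quantum binomial divisor $\sum_\ell{a_1-bp\brack\ell}_{v^c}t^\ell$ (rather than the paper's shorthand $(1+t)^{a_1-bp}$, which is a notational slip there) is if anything more precise.
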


\begin{proof}
By considering the support regions of $\UX[a_1,a_2]$ and $\LX[a_1,a_2]$ we immediately see (i).

The first claim $\uc(i,j)=\lc(i,j)$ of (ii) follows from the definition of $\uc(i,j)$ and $\lc(i,j)$.  To prove the second claim of (ii), observe that  $(1+t)^{a_1-bp}$ divides
$$\sum_{i=0}^{q'}\uc(p,i)t^i-\sum_{i=0}^{q'}\lc(p,i)t^i=\sum_{i=q}^{q'}(\uc-\lc)(p,i)t^i=t^q\sum_{i=0}^{a_1-bp}(\uc-\lc)(p,q+i)t^i,$$ but the last sum has degree at most $a_1-bp$, so it is a constant multiple of $(1+t)^{a_1-bp}$. Then its coefficients of the lowest and highest degrees are equal, which gives
 $$(\uc-\lc)(p,q)=(\uc-\lc)(p,q+a_1-bp)=(\uc-\lc)(p,q')=\uc(p,q'),$$
where we have used $\lc(p,q')=0$ in the last equality.  By (i) the first equality in the second claim of (ii) follows.  The equality $\ulc(p',q)=\frac{\lc(p,q)-\uc(p,q)}{2}$ can be proved similarly.
\end{proof}

We denote $\uc(i,j)_{a_1,a_2}$ to illustrate the dependence of $\uc(i,j)$ on $(a_1,a_2)$.
The meaning of $\lc(i,j)_{a_1,a_2}$, $\lc(i,j)_{a_1,a_2}$ and $(\uc-\lc)(i,j)_{a_1,a_2}$ is similar.

\begin{lemma}\label{lem:linear}
Let $(a_{1},a_{2})$ be a positive imaginary root.  Consider a lattice point $(p,q)$ on the interior of the segment $OB$ and assume $\uc(i,j)_{a_1,a_2}=\lc(i,j)_{a_1,a_2}$ for every lattice point $(i,j)$ on the interior of the segment $OB$ with $i<p$.  Then the following are true  for every positive integer $n$:
\smallskip

{\rm(i)} for every lattice point $(i,j)$ on interior of the segment $OB$ with $i<p$, we have
$$\uc(i,j)_{na_1,na_2}=\lc(i,j)_{na_1,na_2}.$$

{\rm(ii)} $\uc(p,q)_{na_1,na_2}-\lc(p,q)_{na_1,na_2}=n(\uc(p,q)_{a_1,a_2}-\lc(p,q)_{a_1,a_2}).$
\end{lemma}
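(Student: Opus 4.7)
The proof is by strong induction on $k$, where $(p,q)=(p_k,q_k):=k\,(p_1,q_1)$ and $(p_1,q_1)$ is the primitive lattice vector generating the ray $\overline{OB}$; the hypothesis of the lemma says exactly that $\uc(p_j,q_j)_{a_1,a_2}=\lc(p_j,q_j)_{a_1,a_2}$ for $j=1,\dots,k-1$. Fix $n\ge 1$.

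Part (i) is immediate from the inductive hypothesis. For each $j<k$, the hypothesis of Lemma~\ref{lem:linear} at $(p_j,q_j)$ is contained in ours, so the inductive statement of (ii) at $(p_j,q_j)$ yields
\[
\uc(p_j,q_j)_{na_1,na_2}-\lc(p_j,q_j)_{na_1,na_2}
=n\bigl(\uc(p_j,q_j)_{a_1,a_2}-\lc(p_j,q_j)_{a_1,a_2}\bigr)=0.
\]
(For the base case $k=1$ part (i) is vacuous.)

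With (i) established, Lemma~\ref{lem:pq':q}(ii) now applies to the scaled pair $(na_1,na_2)$, giving $\uc(i,j)_{na_1,na_2}=\lc(i,j)_{na_1,na_2}$ for every $(i,j)\ne(p,q)$ with $0\le i\le p$ and $0\le j\le q$. The divisibility argument in the proof of that lemma further extracts the identity $\uc(p,q)-\lc(p,q)=\uc(p,q+a_1-bp)$, realizing the difference as the top column coefficient on edge $\overline{BC}$; the same identity holds for $(na_1,na_2)$. Thus part (ii) reduces to the scaling identity
\[
\uc(p,\,q+na_1-bp)_{na_1,na_2}=n\cdot\uc(p,\,q+a_1-bp)_{a_1,a_2}.
\]

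To tackle this, use Proposition~\ref{gen_recursion} in the column direction (with $m=a_1-bp$ and seed length $q+1$) to factor the column generating function as $\sum_j \uc(p,j)\,t^j = P_p(t)\cdot B_{a_1-bp}(t)$, where $B_s(t):=\sum_\ell {s\brack \ell}_{v^c} t^\ell$ and $\deg P_p\le q$. The top column coefficient equals the coefficient of $t^q$ in $P_p(t)$, and inverting via Lemma~\ref{cor:quantum convolution} yields
\[
[t^q]\,P_p(t)=\sum_{\ell=0}^q (-1)^\ell\,\uc(p,q-\ell)\,{a_1-bp+\ell-1\brack \ell}_{v^c},
\]
with the analogous identity for $(na_1,na_2)$. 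The main obstacle is controlling how the column seeds $\uc(p,\ell)_{na_1,na_2}$ for $\ell\le q$ transform under the scaling: these are determined by iterated row recurrences from earlier columns whose binomial coefficients ${na_2-c\ell+k-1\brack k}_{v^b}$ do not themselves scale linearly in $n$. The requisite factor of $n$ must therefore emerge from a delicate interplay between these row contributions and the column inversion factors ${na_1-bp+\ell-1\brack \ell}_{v^c}$, to be exposed by a sub-induction on the row index (applying the row version of the same factorization, using the agreement of $\uc$ and $\lc$ in columns $<p$) together with the quantum Vandermonde identity recorded in the Appendix.
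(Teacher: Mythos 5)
Your setup for part (i) via a sub-induction on the lattice point along $OB$ is sound: the lemma applied at any earlier point $(p_j,q_j)$ (whose hypothesis is contained in the given one) gives vanishing of the scaled differences, and this is a valid (if more elaborate) alternative to the paper's framing. The fatal problem is part (ii), where you do not actually prove the claimed scaling identity but only describe it as something that ``must emerge from a delicate interplay'' and will ``be exposed by a sub-induction on the row index together with the quantum Vandermonde identity.'' That is precisely the crux of the lemma, and your own phrasing makes clear it is left unresolved. There is no reason to expect a direct computation with the recursions to produce a linear dependence on $n$: as you correctly observe, the binomial coefficients ${na_2-c\ell+k-1\brack k}_{v^b}$ and ${na_1-bp+\ell-1\brack \ell}_{v^c}$ individually behave like $v^{O(n)}$, and cancellation across many such terms down to an overall factor of $n$ is a genuine theorem, not a formality you can wave at.

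The paper proves (ii) by an entirely different mechanism that avoids this combinatorial morass. Fixing $(p,q)$, it inducts on $n$ and studies the product $\ULX[a_1,a_2]\cdot\ULX[ka_1,ka_2]$, expanding it both in the quantum torus and in the quasi-greedy basis (as in \eqref{eq:product Laurent:q}). Two ideas do the work: (a) the support of the product is controlled by Lemma~\ref{lemma:support:q}(iii), forcing the only quasi-greedy summands whose supports reach the boundary segment $C_{k+1}P'_{k+1}$ to be $\ULX[(k+1)a_1,(k+1)a_2]$ and $\ULX[(k+1)a_1-bp,(k+1)a_2-cq]$; and (b) the collinearity of $O$, $C_1$, $C_k$, $P'_1$, $P'_k$ means the relevant monomials quasi-commute trivially, giving the clean additive relation $g_{P'_{k+1}}=d^{(k)}_{P'_k}+d^{(1)}_{P'_1}$ in \eqref{eq:d}. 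Combined with the symmetric computation at $P''_{k+1}$ (so that the two contributions of $h_{-P_{k+1}}$ cancel), one reads off (i') and (ii') simultaneously for $k+1$. This product-and-support argument is what makes the factor of $n$ appear additively step by step; nothing like it is present in your proposal, so the proof has a genuine gap.
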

\begin{proof}
We first introduce some notation and restate (i)(ii).  For simplicity we write $X^{P}=X^{(i,j)}$ for $P=(i,j)$.  For a positive integer $k$, we describe lattice points necessary for understanding the support region of the product $\ULX[a_1,a_2]\cdot \ULX[ka_1,ka_2]$:
\begin{equation}\label{OACD}
\aligned
&O=(0,0),\\
&A_k=k\cdot(-a_1+ba_2,-a_2),\quad  C_k=k\cdot(-a_1,-a_2+ca_1),\quad  D_k=k\cdot(-a_1,-a_2),\\
&E_{k}=A_k+C_1=(kba_2-(k+1)a_1,ca_1-(k+1)a_2),\\
&F=A_1+C_1=(ba_2-2a_1,ca_1-2a_2),\\
&G_{k}=A_1+C_k=(ba_2-(k+1)a_1,kca_1-(k+1)a_2),\\
&P_k=(-ka_1+bp,-ka_2+cq),\\
&P'_k=(-ka_1+bp,-ka_2+c(q+ka_1-bp)),\\
&P''_k=(-ka_1+b(p+ka_2-cq),-ka_2+cq).
\endaligned
\end{equation}
Then the support region 
$\overline{\underline{S}}[ka_1,ka_2]$ is the region $D_kA_kOC_k\setminus\{O\}$. Note that the region is concave because $(ka_1,ka_2)$ is a positive imaginary root.
The support of the product $\ULX[a_1,a_2]\cdot \ULX[ka_1,ka_2]$ is
contained in the Minkowski sum of $D_1A_1OC_1$ and $D_kA_kOC_k$, which is
the closed region
$$R_{\rm prod}:=D_{k+1}A_{k+1}A_kE_{k}FG_{k}C_kC_{k+1}D_{k+1}.$$

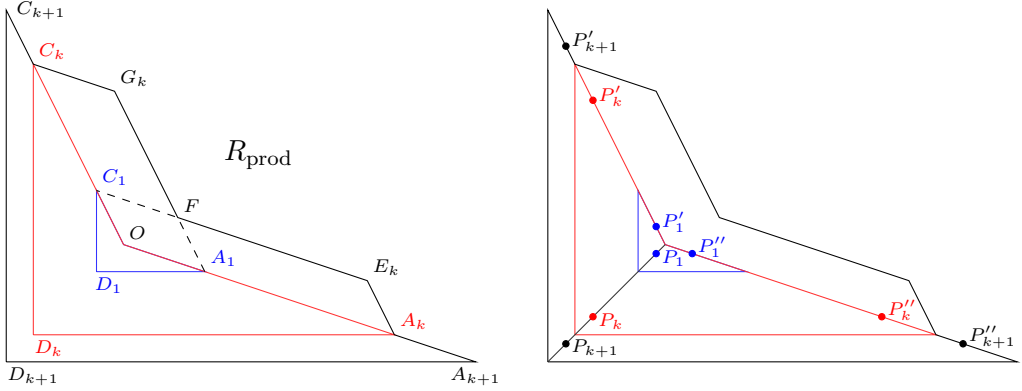
\begin{figure}[h]
\begin{tikzpicture}[scale=1.2]
\begin{scope}[shift={(0,0)}]
\usetikzlibrary{patterns}
\draw[blue!80] (-.3,-.3)--(.9,-.3)--(0,0)--(-.3,.6)--(-.3,-.3);
\draw[red!80] (-1,-1)--(3,-1)--(0,0)--(-1,2)--(-1,-1);
\draw[black] (-1.3,-1.3)--(3.9,-1.3)--(3,-1)--(2.7,-.4)--(.6,.3)--(-.1,1.7)--(-1,2)--(-1.3,2.6)--(-1.3,-1.3);
\draw[dashed] (.9,-.3)--(.6,.3)--(-.3,.6);
\draw[blue] (.1,-.45) node[anchor=east] {\tiny$D_1$};
\draw[red] (-1.1,-1.35) node[anchor=south west] {\tiny$D_k$};
\draw[blue] (.85,-.35) node[anchor=south west] {\tiny$A_1$};
\draw[blue] (-.35,.55) node[anchor=south west] {\tiny$C_1$};
\draw (-1,-1.25) node[anchor=north] {\tiny$D_{k+1}$};
\draw (3.9,-1.25) node[anchor=north] {\tiny$A_{k+1}$};
\draw[red] (2.95,-1.05) node[anchor=south west] {\tiny$A_k$};
\draw (2.65,-.45) node[anchor=south west] {\tiny$E_{k}$};
\draw (.55,.25) node[anchor=south west] {\tiny$F$};
\draw (-.15,1.65) node[anchor=south west] {\tiny$G_{k}$};
\draw[red] (-1.05,1.95) node[anchor=south west] {\tiny$C_k$};
\draw (-1.3,2.6) node[anchor=west] {\tiny$C_{k+1}$};
\draw (-.05,-.05) node[anchor=south west] {\tiny$O$};
\draw (1,1) node[anchor=west] {$R_{\rm prod}$};
\end{scope}
\begin{scope}[shift={(6,0)}]
\usetikzlibrary{patterns}
\draw[blue!80] (-.3,-.3)--(.9,-.3)--(0,0)--(-.3,.6)--(-.3,-.3);
\draw[red!80] (-1,-1)--(3,-1)--(0,0)--(-1,2)--(-1,-1);
\draw[black] (-1.3,-1.3)--(3.9,-1.3)--(3,-1)--(2.7,-.4)--(.6,.3)--(-.1,1.7)--(-1,2)--(-1.3,2.6)--(-1.3,-1.3);
\draw[black!80] (-1.3,-1.3)--(0,0);
\filldraw[black] (-1.1,-1.1) circle(1pt);\draw (-1.15,-1.15) node[anchor=west] {\tiny$P_{k+1}$};
\filldraw[black] (-1.1,2.2) circle(1pt);\draw (-1.15,2.25) node[anchor=west] {\tiny$P'_{k+1}$};
\filldraw[black] (3.3,-1.1) circle(1pt);\draw (3.25,-1.025) node[anchor=west] {\tiny$P''_{k+1}$};
\filldraw[red] (-0.8,-0.8) circle(1pt);\draw[red] (-0.85,-0.85) node[anchor=west] {\tiny$P_{k}$};
\filldraw[red] (-.8,1.6) circle(1pt);\draw[red] (-.85,1.65) node[anchor=west] {\tiny$P'_{k}$};
\filldraw[red] (2.4,-0.8) circle(1pt);\draw[red] (2.35,-0.725) node[anchor=west] {\tiny$P''_{k}$};
\filldraw[blue] (-.1,-.1) circle(1pt);\draw[blue] (-.15,-.15) node[anchor=west] {\tiny$P_{1}$};
\filldraw[blue] (-.1,0.2) circle(1pt);\draw[blue] (-.15,0.25) node[anchor=west] {\tiny$P'_{1}$};
\filldraw[blue] (0.3,-.1) circle(1pt);\draw[blue] (0.25,-.025) node[anchor=west] {\tiny$P''_{1}$};
\end{scope}
\end{tikzpicture}     
\caption{Minkowski sum $R_{\rm prod}$}
\label{fig:Minkowski sum}                               
\end{figure}
\noindent Note that by our choice of $(p,q)$ each point $P_k$ lies on the interior of the segment $OD_k$ while $P'_k$ (resp.~$P''_k$) is the intersection of the line $OC_k$ (resp.~$OA_k$) with the vertical (resp.~horizontal) line passing through $P_k$ (see Figure \ref{fig:Minkowski sum}).

For convenience, write $d^{(k)}_P$ for the coefficients in
$$\ULX[ka_1,ka_2]=\sum_{P\in\ZZ^{2}}d^{(k)}_PX^{P}.$$
In other words, $d^{(k)}_{-ka_1+bi,-ka_2+cj}=\ulc(i,j)_{ka_1,ka_2}$.  Then using Lemma \ref{lem:pq':q}(ii), we can restate conditions (i) and (ii) as follows:

{\rm(i')} for every lattice point $P$ in the interior of $C_{n}P_{n}'$, we have $d^{(n)}_{P}=0$;

{\rm(ii')} $d^{(n)}_{P'_{n}}=n\, d^{(1)}_{P'_{1}}$.

We prove (i')(ii') by induction on $n$. For $n=1$ the first holds by our assumptions and Lemma \ref{lem:pq':q}(ii), while the second is trivial. So we assume that they hold for $n\le k$.

Denote
\begin{equation}\label{eq:product Laurent:q}
\ULX[a_1,a_2]\cdot \ULX[ka_1,ka_2]=\sum_{i,j}g_{i,j}X^{(i,j)}=\sum_{i,j}h_{i,j}\ULX[i,j], 
\end{equation}
where $g_{i,j},\; h_{i,j}\in\mathbb{Q}[v^{\pm1}]$.  Note that $g_{i,j}\ne0$ implies $i=-(k+1)a_1+bs$ and $j=-(k+1)+ct$ for some $s,t\in\ZZ_{\ge0}$.

We first aim to compute the coefficient $g_{P'_{k+1}}$.   We will need the simple observation that $X^{(a,b)}\cdot X^{(c,d)}=v^{bc-ad}X^{(a+c,b+d)}$ for any integers $a,b,c,d$. In particular, if $a:b=c:d$ then 
$$X^{(a,b)}\cdot X^{(c,d)}=X^{(a+c,b+d)}=X^{(c,d)}\cdot X^{(a,b)},$$ 
in other words, $X^{(a,b)}$ and $X^{(c,d)}$ commute if the lattice points $(a,b), (c,d), (0,0)$ are collinear.  Note that by assumption $d^{(1)}_P=0$ for every lattice point $P$ on the interior of $C_1P'_1$ and $d^{(k)}_P=0$ for every lattice point $P$ on the interior of $C_kP'_k$.  It follows that $g_P=0$ for any lattice point $P$ on the interior of $C_{k+1}P'_{k+1}$ and that, to understand $g_{P'_{k+1}}$, we only need to consider two decompositions of ${P'_{k+1}}$ as the sum of a point in the support of $\ULX[ka_1,ka_2]$ and a point in the support of $\ULX[a_1,a_2]$, namely $C_1+P_k$ and $P'_1+C_k$.
Thus
$$\aligned
g_{P'_{k+1}}X^{P'_{k+1}}&=d^{(1)}_{C_{1}}X^{C_{1}}\cdot d^{(k)}_{P'_{k}}X^{P'_{k}}+d^{(1)}_{P'_{1}}X^{P'_{1}}\cdot d^{(k)}_{C_{k}}X^{C_k}\\
&=d^{(1)}_{C_{1}}d^{(k)}_{P'_{k}}X^{P'_{k+1}}+d^{(1)}_{P'_{1}}d^{(k)}_{C_{k}}X^{P'_{k+1}}
\endaligned
$$
where we used the fact that $C_{k}, P'_{k}, C_{1}, P'_{1}$ and $(0,0)$ are collinear.
Since $d^{(k)}_{C_{k}}=d^{(1)}_{C_{1}}=1$ (for example, $d^{(1)}_{C_{1}}=\ulc(0,a_1)_{a_1,a_2}={a_1\brack a_1}_{v^c}=1$), we have
\begin{equation}\label{eq:d}
g_{P'_{k+1}}=d^{(k)}_{P'_{k}}+d^{(1)}_{P'_{1}}.
\end{equation}
Meanwhile by the induction assumption and Lemma \ref{lem:pq':q}(ii), for $n\le k$ we have
\begin{equation}\label{eq:dn}
d^{(n)}_{P'_{n}}=\ulc(p,q+na_1-bp)_{na_1,na_2}=(\uc-\lc)(p,q)_{na_1,na_2}/2.
\end{equation}
So \eqref{eq:d} and \eqref{eq:dn}, together with the inductive assumption (ii'), imply
\begin{equation}\label{eq:2d P'}
2g_{P'_{k+1}}=(\uc-\lc)(p,q)_{ka_1,ka_2}+(\uc-\lc)(p,q)_{a_1,a_2}=(k+1)(\uc-\lc)(p,q)_{a_1,a_2}.
\end{equation}
Similarly,
\begin{equation}\label{eq:2d P''}
2g_{P''_{k+1}}=(\lc-\uc)(p,q)_{ka_1,ka_2}+(\lc-\uc)(p,q)_{a_1,a_2}=(k+1)(\lc-\uc)(p,q)_{a_1,a_2}.
\end{equation}

By Lemma \ref{lemma:support:q}(iii), all quasi-greedy elements that appear in the right hand side of \eqref{eq:product Laurent:q} with nonzero coefficients have their support regions lying inside $R_{\rm prod}$.  Therefore, if $h_{i,j}\neq0$ (recall that $h_{i,j}$ is defined in \eqref{eq:product Laurent:q}) and either $i\ge ka_{1}$ or $j\ge ka_{2}$, then 
$(i,j)=(\lambda a_{1},\lambda a_{2})$ where $\lambda\in\mathbb{Q}$ satisfies $k\le \lambda\le k+1$.  Indeed, write $i\ge ka_{1}$ as $(k+1)a_1-bs$ so the point of $OC_{k+1}$ above $-i$ is $\Big(-i,-(k+1)a_2+c\big(\frac{ba_2s}{ca_1}+i\big)\Big)$, where we note that $\Big(-i,-(k+1)a_2+\frac{ba_2s}{a_1}\Big)$ lies on $OD_{k+1}$.  If $j<(k+1)a_2-\frac{ba_2s}{a_1}$ (i.e. $(-i,-j)$ lies above $OD_{k+1}$), then the point $(-i,-j+ci)$ from the support of $\ULX[i,j]$ is not contained in $R_{\rm prod}$, a contradiction.  A similar argument gives the claim for $j\ge ka_{2}$.

Clearly, $g_{P_{k+1}}=1$ and so we must have $h_{(k+1)a_1,(k+1)a_2}=1$.  It follows that the support of $\ULX[a_1,a_2]\cdot \ULX[ka_1,ka_2]-\ULX[(k+1)a_1,(k+1)a_2]$ must be contained in the region obtained from $R_{\rm prod}$ by removing a strip of width 1 from the west and south boundaries.  Since $g_P=0$ for any lattice point $P$ on the interior of $C_{k+1}P'_{k+1}$ we see that $h_{i,j}$ must be zero for any $(-i,-j)$ strictly between $D_{k+1}P_{k+1}$, indeed for such a point the quasi-greedy element $\ULX[i,j]$ contains the point $(-i,-j+ci)$ from the interior of $C_{k+1}P'_{k+1}$ in its support.  

The claim (i') now follows for $k+1$.  Indeed, the argument above implies $d^{(k+1)}_P=g_P=0$ for any lattice point $P$ in the interior of the line segment $C_{k+1}P'_{k+1}$.  But then Lemma \ref{lem:pq':q}(ii) gives
\begin{align*}
d^{(k+1)}_{P'_{k+1}}&=\ulc(p,q+(k+1)a_1-bp)_{(k+1)a_1,(k+1)a_2}=(\uc-\lc)(p,q)_{(k+1)a_1,(k+1)a_2}/2;\\
d^{(k+1)}_{P''_{k+1}}&=\ulc(p+(k+1)a_2-cq,q)_{(k+1)a_1,(k+1)a_2}=(\lc-\uc)(p,q)_{(k+1)a_1,(k+1)a_2}/2.
\end{align*}
We also see that only $\ULX[(k+1)a_1,(k+1)a_2]$ and $\ULX[(k+1)a_1-bp,(k+1)a_2-cq]$ (recall that $P_{k+1}=(-(k+1)a_1+bp,-(k+1)a_2+cq)$\; ) contribute to the coefficients $g_{P'_{k+1}}$ and $g_{P''_{k+1}}$, i.e. 
\[g_{P'_{k+1}}=d^{(k+1)}_{P'_{k+1}}+h_{-P_{k+1}}\quad\text{ and }\quad g_{P''_{k+1}}=d^{(k+1)}_{P''_{k+1}}+h_{-P_{k+1}}.\]
Adding these expressions and recalling \eqref{eq:2d P'} and \eqref{eq:2d P''} gives $0=0+2h_{-P_{k+1}}$, i.e. $h_{-P_{k+1}}=0$.  From this we get (ii') for $k+1$, i.e.
\[d^{(k+1)}_{P'_{k+1}}=g_{P'_{k+1}}=(k+1)(\uc-\lc)(p,q)_{a_1,a_2}/2=(k+1)d^{(1)}_{P'_1}.\]
This completes the inductive proof of (i') and (ii') and thus proves the lemma.
\end{proof}

\begin{lemma}\label{linear implies 0}\mbox{}

{\rm(i)} Both $\uc(p,q)$ and $\lc(p,q)$ are of the form
\begin{equation}\label{star}
\sum_{i,j=-N}^{N} c_{ij}(v)v^{ba_{2}i+ca_{1}j},
\end{equation}
where $N\in\mathbb{N}$ and $c_{ij}(v)\in \QQ(v)$ depend on $b,c,p,q$ but do not depend on $a_{1}, a_{2}$.
Therefore 
$$f_n:=f_n(v)=(\uc-\lc)(p,q)_{na_1,na_2}/2=\sum_{i=-M}^{M}d_{i}(v)v^{ni},$$
where $M\in\mathbb{N}$ and $d_{i}(v)\in\mathbb{Q}(v)$ does not depend on $n$. 

{\rm(ii)} For every positive integer $n$ we have $f_n(v)\equiv 0$.
\end{lemma}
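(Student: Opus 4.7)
The strategy is to establish (i) by structural induction on $p+q$ using the recurrence \eqref{eq:recurrence}, then derive (ii) by combining (i) with Lemma~\ref{lem:linear}(ii) and a polynomial-degree comparison in $v$.

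For (i), the base cases $\uc(p,0)={[a_2]_+\brack p}_{v^b}$ and $\uc(0,q)={[a_1]_+\brack q}_{v^c}$ from Corollary~\ref{cor:baseline} already have the required form: using the identity $[m]_{v^b}=(v^{bm}-v^{-bm})/(v^b-v^{-b})$, a direct expansion exhibits each $[m]_{v^b}$ (for $m$ affine in $a_2$) as a $\QQ(v)$-combination of $v^{+ba_2}$ and $v^{-ba_2}$ whose coefficients depend only on $b,c,p,q$, and the binomial is a product of $k$ such factors divided by the corresponding quantum factorial. For the inductive step, either branch of \eqref{eq:recurrence} writes $\uc(p,q)$ as a combination of prior $\uc(p',q')$ with $p'+q'<p+q$ times a quantum binomial ${[a_2-cq]_++k-1\brack k}_{v^b}$ or ${[a_1-bp]_++\ell-1\brack \ell}_{v^c}$; the inductive hypothesis together with the preceding observation preserves the form under multiplication and summation, and the argument for $\lc$ is identical. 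The consequence stated after (i) follows by substituting $(a_1,a_2)\mapsto(na_1,na_2)$ in the expression for $\uc(p,q)-\lc(p,q)$: each exponent $ba_2 i+ca_1 j$ transforms into $n(ba_2 i+ca_1 j)$, so setting $k:=ba_2 i+ca_1 j$ and collecting yields $f_n=\sum_k d_k(v)v^{nk}$ with $d_k(v)\in\QQ(v)$ independent of $n$. The branching in \eqref{eq:recurrence} is preserved by positive scaling of $(a_1,a_2)$ since $\mathrm{sgn}(ca_1 q-ba_2 p)$ is, so the same $c_{ij}$ work uniformly for all $n\ge 1$.

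For (ii), apply Lemma~\ref{lem:linear}(ii), which gives $f_n=n f_1$ for all $n\ge 1$. Combined with the form from (i), this yields
\[
\sum_{k=-M}^{M} d_k(v) v^{nk} \;=\; n\sum_{k=-M}^{M} d_k(v) v^k
\]
in $\QQ(v)$, for every $n\ge 1$. Multiplying through by a common polynomial denominator $Q(v)\in\QQ[v]$, set $\tilde p_k(v):=Q(v)d_k(v)\in\QQ[v^{\pm 1}]$ so the identity becomes $\sum \tilde p_k(v) v^{nk}=n\sum \tilde p_k(v) v^k$. The right-hand side has $v$-degree bounded by a constant independent of $n$. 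If some $\tilde p_k$ with $k>0$ were nonzero, the left-hand side's maximum $v$-degree would be at least $\deg \tilde p_k+nk$, unbounded as $n\to\infty$, a contradiction; hence $\tilde p_k=0$ for all $k>0$. The symmetric comparison of minimum $v$-degrees forces $\tilde p_k=0$ for all $k<0$. The residual identity $\tilde p_0(v)=n\tilde p_0(v)$ then forces $\tilde p_0=0$. Thus every $d_k$ vanishes and $f_n(v)\equiv 0$ for every positive integer $n$.

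The conceptual work is all in (i): one must verify that the substitution $(a_1,a_2)\mapsto(na_1,na_2)$ leaves the recurrence branching invariant so that the coefficients $c_{ij}(v)$ can be chosen uniformly across the family $\{(na_1,na_2)\}_{n\ge 1}$. Once that uniformity is in hand, (ii) reduces to a routine comparison of $v$-degrees as $n$ grows.
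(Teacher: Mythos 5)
Your proposal is correct and follows essentially the same approach as the paper: part (i) makes explicit, via induction on $p+q$, what the paper asserts directly (that $\uc,\lc$ are $\QQ(v)$-linear combinations of products of quantum binomials linear in $a_1,a_2$), and part (ii) is the same degree-growth argument against $f_n = nf_1$. Your treatment is in fact slightly more careful in two spots: you observe explicitly that $\mathrm{sgn}(ca_1q-ba_2p)$ is invariant under $(a_1,a_2)\mapsto(na_1,na_2)$ so the recursion's branching (hence the $c_{ij}$) is uniform in $n$, and in (ii) you rule out negative exponents by a symmetric min-degree comparison rather than leaving them implicit as the paper's ``$M=0$'' conclusion does.
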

\begin{proof}
(i) $\uc(p,q)$ (resp. $\lc(p,q)$) is a linear combination of products of quantum binomial coefficients of the form
$${a_{2}-cq'+k-1\brack k}_{v^{b}}\quad \textrm{ or } \quad {a_{1}-bp'+\ell-1\brack \ell}_{v^{c}}$$
where $p'\ge0,q'\ge0,k>0,\ell>0$ are integers. We may compute the first quantum binomial coefficient as
$${a_{2}-cq'+k-1\brack k}_{v^{b}}=\frac{\prod_{i=1}^{k}[a_{2}-cq'+k-i]_{v^{b}}}{\prod_{i=1}^{k}[i]_{v^{b}}}
=\frac{\prod_{i=1}^{k}(v^{b(a_{2}-cq'+k-i)}-v^{-b(a_{2}-cq'+k-i)})}{\prod_{i=1}^{k}(v^{bi}-v^{-bi})}
$$
which is of the form
$\sum_{i=-k}^{k} c_{i}(v)v^{ba_{2}i}$
where each $c_{i}(v)\in\QQ(v)$ does not depend on $a_{1}, a_{2}$. Similarly the second quantum binomial coefficient is of the form
$\sum_{j=-\ell}^{\ell} c_{j}(v)v^{ca_{1}j}$.
So  $\uc(p,q)$ (resp. $\lc(p,q)$) is of the given form \eqref{star}.  The second claim of (1) follows immediately.

(ii) Multiplying by a common denominator we can assume all $d_{i}(v)$ are polynomials in $v$.  Without loss of generality we assume $d_{M}(v)\neq0$. If $M>0$, then as $n$ grows to infinity, so does the degree of $v^{nM}$. But we have seen in Lemma~\ref{lem:linear}(ii) that $f_n=n\cdot f_1$ for all $n$ and, by the argument above, for $n$ sufficiently large the degree of $f_n(v)$ will be strictly larger than the degree of $f_1(v)$, a contradiction. Thus $M=0$ and $f_n(v)$ does not depend on $n$, in particular $f_1(v)=f_n(v)=nf_1(v)$ for all $n$, but this implies $f_1(v)\equiv 0$. 
\end{proof}

\begin{theorem}\label{theorem:greedy exists:q}
For any $(a_1,a_2)\in\ZZ^2$ we have 
\[\UX[a_1,a_2]=\LX[a_1,a_2]=\ULX[a_1,a_2]=X[a_1,a_2].\]
\end{theorem}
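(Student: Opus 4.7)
The plan is to reduce the full statement to a single nontrivial case and prove that case by induction along the segment $OB$, assembling the lemmas of this section.

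First I will dispense with the easy cases. When $(a_1,a_2)$ falls in Cases (1)--(5) of Definition \ref{df:PSR}, the equality $\UX[a_1,a_2]=\LX[a_1,a_2]=\ULX[a_1,a_2]=X[a_1,a_2]$ is already recorded in the corollary following Definition \ref{df:X}, via the explicit formulas of Corollary \ref{cor:explicit cluster monomials}. When $(a_1,a_2)$ lies in Case (6) but is a real (non-imaginary) root, then $(a_1,a_2)=mP_k+nP_{k+1}$ for some $m,n\ge0$ and some $k$, and Corollary \ref{cor:cluster monomials:q} directly gives $\UX=\LX=X=X_k^{(m,n)}$, whence $\ULX$ coincides with these. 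Thus only the case where $(a_1,a_2)$ is a positive imaginary root with $0<a_1<ba_2$, $0<a_2<ca_1$, and $ca_1^2-bca_1a_2+ba_2^2\le0$ remains to be treated.

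For such an imaginary root, I will show by induction on the interior lattice points $(p,q)$ of the segment $OB$, ordered by increasing first coordinate $p$, that $(\uc-\lc)(p,q)_{a_1,a_2}=0$. The base case is vacuous. For the inductive step, the inductive hypothesis asserts $\uc(i,j)_{a_1,a_2}=\lc(i,j)_{a_1,a_2}$ for every interior lattice point $(i,j)$ of $OB$ with $i<p$, so Lemma \ref{lem:linear}(ii) applies and yields
$$(\uc-\lc)(p,q)_{na_1,na_2}=n\,(\uc-\lc)(p,q)_{a_1,a_2}$$
for every positive integer $n$. Combined with the degree-rigidity of Lemma \ref{linear implies 0}, this forces $(\uc-\lc)(p,q)_{a_1,a_2}=0$, completing the induction. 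Applying Lemma \ref{lem:pq':q}(ii) then gives $\ulc(p,q')_{a_1,a_2}=0$ for every interior lattice point of $BC$; since $\lc$ already vanishes on the interior of $BC$ by the support condition for the lower quasi-greedy element, the identity $\ulc=(\uc+\lc)/2$ forces $\uc(p,q')_{a_1,a_2}=0$ there as well.

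Thus $\UX[a_1,a_2]$ is in fact supported in $\underline{OABC}$, and the uniqueness in Proposition \ref{quasi-greedy_axiom}(1) forces $\UX[a_1,a_2]=\LX[a_1,a_2]$, with $\ULX[a_1,a_2]$ automatically equal to their common value. This common element simultaneously satisfies the recurrences $(\le,>)$ and $(<,\ge)$, which together impose the recurrence $(\le,\ge)$ of Definition \ref{le,ge} (the two prescriptions agree on the diagonal $ca_1q=ba_2p$). This establishes both the existence of $X[a_1,a_2]$ asserted in Theorem \ref{main theorem1} and the desired chain of equalities. The technical heart of the argument is the inductive step in the imaginary case: it rests on the bootstrap in Lemma \ref{lem:linear}, obtained from analyzing the Minkowski support of $\ULX[a_1,a_2]\cdot\ULX[ka_1,ka_2]$, together with the degree-rigidity of Lemma \ref{linear implies 0} that upgrades the linear scaling $f_n=nf_1$ to $f_1\equiv 0$; once those are in hand, the assembly above is essentially formal.
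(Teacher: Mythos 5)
Your proposal is correct and follows essentially the same route as the paper: both rest on the combination of Lemma \ref{lem:linear}(ii) and Lemma \ref{linear implies 0} to kill $(\uc-\lc)(p,q)$ along the interior of $OB$, then pass through Lemma \ref{lem:pq':q}(ii) to conclude the boundary coefficients vanish. The chief difference is presentational: you unfold the induction along $OB$ that the paper leaves implicit (the hypothesis of Lemma \ref{lem:linear} only lets you conclude vanishing at the next interior lattice point once you have it at all earlier ones), which is a genuine clarification of the paper's terse "By Lemma 5.4(ii) and Lemma 5.5(ii) we see that $\uc(p,q)=\lc(p,q)$ for every lattice point on the interior of $OB$." Your closing step also differs in flavor --- you argue that the common element satisfies both $(\le,>)$ and $(<,\ge)$ and hence $(\le,\ge)$, rather than invoking Proposition \ref{prop:equivalent} as the paper does --- but the two routes are equivalent; the paper's version has the advantage that Proposition \ref{prop:equivalent} has already established uniqueness for the support/divisibility characterization, so one need not revisit the recurrences at all.
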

\begin{proof}
 If $(a_1,a_2)$ is not a positive imaginary root then the result was already proven in Corollary~\ref{cor:cluster monomials:q}.  Let $(a_{1},a_{2})$ be a positive imaginary root.  By Lemma~\ref{lem:linear}(ii) and Lemma~\ref{linear implies 0}(ii) we see that $\uc(p,q)_{a_1,a_2}=\lc(p,q)_{a_1,a_2}$ for every lattice point $(p,q)$ on the interior of the segment $OB$.  Now Lemma~\ref{lem:pq':q} gives $\ulc(p,q)=\uc(p,q)=\lc(p,q)=0$ for every $(p,q)$ on the boundary $\overline{\underline{R}}_{\text{greedy}}[a_1,a_2]\setminus R_{\text{greedy}}[a_1,a_2]$, i.e. each of $\UX[a_1,a_2]$, $\LX[a_1,a_2]$, $\ULX[a_1,a_2]$ satisfies the pointed support condition for $R_{\text{greedy}}[a_1,a_2]$.  The result now follows from Proposition~\ref{quasi-greedy_axiom} and Proposition~\ref{prop:equivalent}.
\end{proof}

We may now prove our main theorem.

\begin{proof}[Proof of Theorem \ref{main theorem1} and \ref{main theorem2}]

Theorem \ref{main theorem1} follows immediately from Theorem \ref{theorem:greedy exists:q} and Theorem~\ref{recursive-definition_u}.

The five parts in Theorem \ref{main theorem2} can be seen as follows.  Claim (a) follows from Proposition~\ref{mean invariant:q} and Theorem~\ref{theorem:greedy exists:q}.  For (b), we note that $X[a_1,a_2]$ is bar-invariant because, by definition, each coefficient $e(p,q)$, as well as each monomial $X^{(bp-a_1,cq-a_2)}$, is bar-invariant. The quantum greedy basis is independent of the choice of an initial cluster because $X[a_1,a_2]=\ULX[a_1,a_2]$ by Theorem \ref{theorem:greedy exists:q} and $\ULX[a_1,a_2]$ is mutation invariant by Proposition \ref{mean invariant:q}.  Claim ({c}) is the content of Corollary \ref{cor:cluster monomials:q}.  To see (d), assume that $X[a_1,a_2]$ is the sum of two universally positive elements in $\mathcal{A}_v(b,c)$. Since the specialization (by substituting $v=1$) of any universally positive element in $\mathcal{A}_v(b,c)$ is  universal positive in $\mathcal{A}(b,c)$,  the specialization $x[a_1,a_2]$ of $X[a_1,a_2]$ is also decomposable, in contradiction to Theorem \ref{main theorem-commutative} (d).  Claim (e) follows immediately by comparing the recurrence relations \eqref{eq:classical recurrence} and \eqref{eq:recurrence} defining the commutative greedy basis and quantum greedy basis respectively. 
\end{proof}

\section{Appendix: quantum binomial theorem}\label{ap:symmetric}
Here we recall some useful facts about quantum binomial coefficients.  Let $w$ be an indeterminate.
\begin{lemma} \label{le:quantum binomial theorem}
 Let $X$ and $Y$ be quasi-commuting variables with $YX=w^2XY$.  Then for any integer $n$ we have
 \begin{equation}
 (X+Y)^n=\sum\limits_{k\ge0}{n\brack k}_ww^{k(n-k)}X^kY^{n-k}.
\end{equation}
\end{lemma}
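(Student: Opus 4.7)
The statement is the standard quantum binomial theorem, so my plan is induction on $n$, with the main engine being a $q$-Pascal identity for the bar-invariant quantum binomial coefficients. I will write the proof for $n \ge 0$; the extension to negative $n$ can be handled either by multiplying both sides of the $n \ge 0$ identity by $(X+Y)^{-n}$ and solving, or by running the same induction downward (with $Y$, hence $X+Y$, treated as invertible in an appropriate completion).

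\textbf{Base case.} For $n=0$ both sides equal $1$, and for $n=1$ one has $(X+Y)^1 = X+Y = {1\brack 0}_w Y + {1\brack 1}_w X$, matching the right-hand side since $w^{k(1-k)}=1$ for $k=0,1$.

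\textbf{Inductive step.} Assume the formula for $n$ and compute $(X+Y)^{n+1}=(X+Y)^n(X+Y)$. Using $YX=w^2XY$ iteratively one gets $Y^{n-k}X = w^{2(n-k)}XY^{n-k}$, so
\begin{align*}
(X+Y)^{n+1} &= \sum_{k\ge 0} {n\brack k}_w w^{k(n-k)} X^k Y^{n-k}(X+Y)\\
&= \sum_{k\ge 0} {n\brack k}_w w^{k(n-k)+2(n-k)} X^{k+1}Y^{n-k} + \sum_{k\ge 0}{n\brack k}_w w^{k(n-k)} X^k Y^{n-k+1}.
\end{align*}
Re-indexing the first sum via $k\mapsto k-1$ and collecting the coefficient of $X^k Y^{n+1-k}$, the desired identity reduces to the single scalar equation
\[
{n+1\brack k}_w w^{k(n+1-k)} = {n\brack k-1}_w w^{(k-1)(n-k+1)+2(n-k+1)} + {n\brack k}_w w^{k(n-k)}.
\]
Dividing through by $w^{k(n+1-k)}$, this is equivalent to the $q$-Pascal identity
\[
{n+1\brack k}_w = w^{n-k+1}{n\brack k-1}_w + w^{-k}{n\brack k}_w,
\]
which is a direct computation from the definition ${n\brack k}_w = [n]_w!/([k]_w![n-k]_w!)$ using $[n+1]_w = w^{-k}[n+1-k]_w + w^{n+1-k}[k]_w$ (itself an easy consequence of the formula for $[m]_w$).

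\textbf{Main obstacle.} Nothing here is genuinely deep; the only care needed is bookkeeping of the $w$-exponents and verifying the exact form of the $q$-Pascal identity that matches the chosen normalization $w^{k(n-k)}$ in the statement. I expect a line or two on why the form with $w^{n-k+1}$ and $w^{-k}$ is the right one (as opposed to the other common variant with exponents swapped), since this is the place where the symmetry of the normalization is used.
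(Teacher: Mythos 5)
Your proof is correct and is essentially the same as the paper's: induction on $|n|$, reducing the inductive step to a bar-invariant $q$-Pascal identity. The only cosmetic difference is that you peel off the factor $(X+Y)$ on the right (giving the form ${n+1\brack k}_w = w^{n-k+1}{n\brack k-1}_w + w^{-k}{n\brack k}_w$), whereas the paper peels it off on the left (giving the mirror form ${n+1\brack k}_w = w^{-(n+1-k)}{n\brack k-1}_w + w^{k}{n\brack k}_w$); both are valid consequences of the same symmetric quantum-number identity, and your handling of negative $n$ matches the paper's remark that the computation can be run in reverse.
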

\begin{proof}
 We work by induction on $|n|$, the case $n=0$ being trivial.  For $n>0$ the following calculation accomplishes the induction step:
 \begin{align*}
  (X+Y)^n&=(X+Y)(X+Y)^{n-1}\\
  &=(X+Y)\sum\limits_{k\ge0}{n-1\brack k}_ww^{k(n-1-k)}X^kY^{n-1-k}\\
  &=\sum\limits_{k\ge0}\left({n-1\brack k-1}_ww^{(k-1)(n-k)}+{n\brack k}_ww^{k(n-1-k)+2k}\right)X^kY^{n-k}\\
  &=\sum\limits_{k\ge0}\left({n-1\brack k-1}_ww^{-n+k}+{n\brack k}_ww^k\right)w^{k(n-k)}X^kY^{n-k}\\
  &=\sum\limits_{k\ge0}{n\brack k}_ww^{k(n-k)}X^kY^{n-k}.
 \end{align*}
 But notice that reading the same calculation backwards proves the result for $n<0$.
\end{proof}

\begin{lemma}\label{cor:quantum convolution}
 For $m,n\in\ZZ$ and $k\ge0$ we have
 \begin{equation}
  {m+n\brack k}_w=\sum\limits_{r+s=k}w^{nr-ms}{m\brack r}_w{n\brack s}_w.
 \end{equation}
\end{lemma}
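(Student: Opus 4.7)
My plan is to obtain this convolution identity by computing $(X+Y)^{m+n}$ in two different ways using Lemma~\ref{le:quantum binomial theorem} and then equating coefficients, a standard strategy for $q$-binomial convolution identities. To this end, take quasi-commuting variables $X,Y$ with $YX = w^2 XY$ (as in the setup of Lemma~\ref{le:quantum binomial theorem}), so that all manipulations take place inside the associative algebra they generate.

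First I would apply Lemma~\ref{le:quantum binomial theorem} directly to obtain the single-sum expansion
\[
(X+Y)^{m+n} = \sum_{k \ge 0} {m+n \brack k}_w w^{k(m+n-k)} X^k Y^{m+n-k}.
\]
Next, I would use $(X+Y)^{m+n} = (X+Y)^n \cdot (X+Y)^m$ and expand each factor via Lemma~\ref{le:quantum binomial theorem} to get the double sum
\[
\sum_{r,s \ge 0} {m \brack r}_w {n \brack s}_w w^{r(m-r)+s(n-s)}\, X^s Y^{n-s} X^r Y^{m-r}.
\]
To compare with the single-sum expansion I need to put this product in normal form $X^{r+s} Y^{m+n-r-s}$, which requires sweeping $Y^{n-s}$ past $X^r$; by a trivial induction from $YX = w^2 XY$ one has $Y^a X^b = w^{2ab} X^b Y^a$, introducing an extra factor of $w^{2r(n-s)}$.

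The remaining step is a bookkeeping calculation: extract the coefficient of $X^k Y^{m+n-k}$ from each expansion (setting $k = r+s$), divide through by the common factor $w^{k(m+n-k)}$, and check the exponent identity
\[
r(m-r) + s(n-s) + 2r(n-s) - k(m+n-k) = rn - sm \qquad (r+s = k),
\]
which follows from a direct expansion after substituting $k = r+s$. This exponent simplification is the only place where real computation is required, and it is routine; I would consider it the main (modest) obstacle. Since Lemma~\ref{le:quantum binomial theorem} holds for arbitrary integer exponents and the argument is symbolic in $m$ and $n$, the identity follows for all $m,n \in \ZZ$ and $k \ge 0$.
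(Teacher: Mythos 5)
Your proposal is correct and is essentially the same approach as the paper: both expand $(X+Y)^{m+n}$ two ways via Lemma~\ref{le:quantum binomial theorem} and compare the coefficient of $X^kY^{m+n-k}$. The only cosmetic difference is that you factor as $(X+Y)^n(X+Y)^m$, which after normal-ordering yields the exponent $nr-ms$ exactly as written in the lemma, whereas the paper writes $(X+Y)^m(X+Y)^n$, which produces $w^{ms-nr}$ instead; the two expressions agree because each quantum binomial coefficient, and hence the left-hand side, is invariant under $w\mapsto w^{-1}$, so this discrepancy is immaterial.
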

\begin{proof}
 Let $X$ and $Y$ be quasi-commuting variables with $YX=w^2XY$.  The left hand side of the desired equality is the coefficient of $w^{k(m+n-k)}X^kY^{m+n-k}$ in the product $(X+Y)^{m+n}$ while the right hand side is its coefficient in the product $(X+Y)^m(X+Y)^n$.
\end{proof}

\end{document}